\documentclass[reqno, 12pt]{amsart}

\usepackage[utf8]{inputenc}
\usepackage{lmodern}
\usepackage[english]{babel}
\usepackage{csquotes}
\usepackage{amstext}
\usepackage{amsthm}
\usepackage{amsmath}
\usepackage{amssymb}
\usepackage{graphicx}
\usepackage{mathtools}
\usepackage{float}
\usepackage[svgnames]{xcolor}
\usepackage[margin=1in]{geometry}
\usepackage{dsfont}
\usepackage{fancyvrb}
\usepackage[unicode=true,
    pdfusetitle,
    bookmarks=true,
    bookmarksnumbered=false,
    bookmarksopen=false,
    breaklinks=false,
    pdfborder={0 0 1},
    backref=false,
    colorlinks=true]{hyperref}
\usepackage{url}
\usepackage{enumitem}
\usepackage{tikz}
\usepackage{placeins}
\usepackage{ulem}

\definecolor{color1}{RGB}{27,158,119}
\definecolor{color2}{RGB}{217,95,2}
\definecolor{color3}{RGB}{117,112,179}
\definecolor{color4}{RGB}{231,41,138}

\hypersetup{
    pdftitle={},
    pdfauthor={},
    pdfsubject={},
    pdfkeywords={},
    colorlinks=true,
    linkcolor=DarkBlue,
    citecolor=DarkRed,
    filecolor=DarkMagenta,
    urlcolor=DarkGreen}

\newtheorem{theorem}{Theorem}[section]
\newtheorem{lemma}[theorem]{Lemma}
\newtheorem{proposition}[theorem]{Proposition}

\newtheorem{remark}[theorem]{Remark}
\theoremstyle{definition}

\theoremstyle{definition}

\def\R{\mathbb R}
\def\C{\mathbb C}
\def\N{\mathbb N}
\def\Z{\mathbb Z}

\def\H{\operatorname{H}}
\def\eps{\varepsilon}

\title[Dispersion on the line with Delta potentials]{Dispersion for the Schrödinger equation on the line with short-range array of Delta potentials}

\author[R.~Duboscq]{Romain Duboscq}
\author[\'E.~Durand-Simonnet]{Élio Durand-Simonnet}
\author[S.~Le Coz]{Stefan Le Coz}

\thanks{This work was supported by the ANR LabEx CIMI (grant ANR-11-LABX-0040) within the French State Programme "Investissements d'Avenir" and the ANR project NQG ANR-23-CE40-0005.}

\address[Romain Duboscq]{Institut de Math\'ematiques de Toulouse; UMR5219,
  \newline\indent
  Universit\'e de Toulouse; CNRS,
  \newline\indent
  INSA, F-31077 Toulouse,
  \newline\indent
  France}
\email[Romain Duboscq]{romain.duboscq@math.univ-toulouse.fr}

\address[Élio Durand-Simonnet and Stefan Le Coz]{Institut de Math\'ematiques de Toulouse; UMR5219,
  \newline\indent
  Universit\'e de Toulouse; CNRS,
  \newline\indent
  UPS IMT, F-31062 Toulouse Cedex 9,
  \newline\indent
  France}
  \email[Élio Durand-Simonnet]{elio.durand\_simonnet@math.univ-toulouse.fr}
\email[Stefan Le Coz]{slecoz@math.univ-toulouse.fr}

\subjclass[2020]{35Q41, 35P05, 35B40, 47A40, 81Q10}

\keywords{One-dimensional Schrödinger equation, delta interactions, short-range potentials, dispersive estimates, Jost solutions}

\begin{document}

\begin{abstract}
    We study dispersive properties of the one-dimensional Schrödinger equation with a short-range array of delta interactions. More precisely, we consider the self-adjoint operator obtained by perturbing the free Laplacian on the line with a real-valued sequence of Dirac delta potentials belonging to weighted $\ell^1 (\Z)$ spaces. Under suitable decay assumptions on the coupling constants and in the absence of a zero-energy resonance, we establish the $L^1 (\R) \to L^\infty (\R)$ dispersive estimate with decay rate $|t|^{-1/2}$ for the associated Schrödinger group. The proof relies on a limiting absorption principle in weighted spaces, explicit representation of the resolvent kernel in terms of Jost solutions and Born series expansion of the Friedrichs extension of the perturbed operator.
\end{abstract}

\maketitle

\tableofcontents

\section{Introduction} \label{sec1}

In this work, we investigate the dispersive properties of the Schrödinger operator with an infinite number of point interactions (or point defects). To be more specific, we are interested in the operator formally given by
\begin{equation*}
    \H_{\alpha} = -\partial_{xx} + \sum_{j\in\mathbb{Z}}\alpha_j \delta(x-j),
\end{equation*}
where $(\alpha_j)_{j\in\mathbb{Z}}$ is a real-valued sequence. This operator corresponds to a Laplace operator perturbed by an infinite sum of singular potentials. This class of operators was first introduced in \cite{KrPe31} (in the particular case $\alpha_j = \alpha$) as a model describing the interaction between an electron and a fixed crystal lattice. Since then, it has been the subject of many mathematical investigations. A systematic study in the case of finitely many point interactions is given in \cite{AlGeHoHo12}, where the authors analyze in detail the spectral properties of the operator. Another comprehensive account of similar properties for related operators can be found in \cite{KoMa13}.

Once our operator is defined properly as a self-adjoint operator (see Section \ref{sec2}), Stone's theorem yields the existence of a propagator $(e^{i t \H_{\alpha}})_{t \in \mathbb{R}}$ which is associated to the time-dependent Schrödinger equation
\begin{equation} \label{eqSch}
    \begin{cases}
        i \partial_t u = \H_{\alpha} u, \\
        u(0, \cdot) = f,
    \end{cases}
\end{equation}
by the relation $u (t, \cdot) = e^{i t \H_{\alpha}} f$, for any $t \in \mathbb{R}$.
It is known that, for the free Schrödinger equation (\textit{i.e.} for $\alpha = 0$), this equation is dispersive which is a property corresponding to the spreading of $u$ over time. It is linked with the so-called dispersive estimates (see \cite{Sc07} for a survey). These estimates are essential tools for the analysis of nonlinear Schrödinger equations (Cauchy problem, scattering theory, see \cite{GiVe79-a, GiVe79-b, Ca03}).

In the context of the operator $\H_{\alpha}$, to the best of our knowledge, the available results around dispersive estimates are restricted to the setting of a finite number of point interactions: \cite{AdSa05, DaHo09} treat the case of a single point interaction, \cite{KoSa10, FePa14} consider two point interactions, and \cite{DuMaWe11, BaIg14} addresses the case of several point interactions. In all these works, with the exception of \cite{DuMaWe11}, the authors rely on explicit (or almost explicit) formulas for the resolvent associated with the operator, which in turn yield an explicit (or nearly explicit) representation of the propagator. For a finite number of point interactions, such explicit formulas can be found, for instance, in \cite{AlGeHoHo12}. However, it is unclear whether these representations can be extended to the case of infinitely many point interactions in a way that would still provide a usable expression for deriving dispersive estimates. In \cite{DuMaWe11}, the authors approach the problem by using the analysis of the so-called wave operators associated to the Schrödinger operators and, to do so, they study the Jost solutions thanks to argument borrowed from \cite{DeTr79}. We finally mention \cite{AnPiTe06}, where the weighted dispersive estimate has been established for the Schrödinger operator on $\R^3$ with a finite number of point defect, together with \cite{IaSc17, DeMiScYa18, CoMiYa19} where unweighted $L^p - L^{p'}$ estimates for restricted regimes of $p$ are obtained in $\R^3$.

Our purpose in this paper is to derive a dispersive estimate for the operator $\H_{\alpha}$ with an infinite number of point interactions. Our main result is stated below.

\begin{theorem} \label{thMain}
    Let $\operatorname{P}$ is the projection onto the essential spectral subspace associated to $\H_{\alpha}$. Assume that there exists $\mu \in (0, 1)$ such that $(j^{1 + \mu} \alpha_j) \in \ell^1 (\Z)$ and that there is no resonance at zero energy, or that $(j^2 \alpha_j) \in \ell^1 (\Z)$. Then, for any $t \in \R^*$ and $f \in L^1 (\R)$, the dispersive estimate
    \begin{equation} \label{eqMain}
        \left\| e^{it \H_\alpha} \operatorname{P} f \right\|_{L^\infty (\R)}
        \lesssim |t|^{-1/2} \| f \|_{L^1 (\R)}
    \end{equation}
    holds.
\end{theorem}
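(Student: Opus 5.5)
We follow the classical one-dimensional strategy of Goldberg--Schlag, adapted to the delta potentials. By Stone's formula and the limiting absorption principle in weighted spaces, for $f,g$ nice,
\begin{equation*}
    \langle e^{it\H_\alpha}\operatorname{P} f, g\rangle = \frac{1}{\pi i}\int_{-\infty}^{\infty} e^{it\lambda^2}\,\lambda\,\langle R^+(\lambda^2) f, g\rangle\, d\lambda ,
\end{equation*}
where $R^\pm(\lambda^2)=(\H_\alpha-\lambda^2\mp i0)^{-1}$. The point spectrum is removed by $\operatorname{P}$, and the absence of embedded positive eigenvalues follows from the short-range assumption. It then suffices to bound the oscillatory integral of the resolvent kernel uniformly in $x,y$. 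For $x<y$ we write this kernel through the Jost solutions:
\begin{equation*}
    R^+(\lambda^2)(x,y)=\frac{f_+(x,\lambda)f_-(y,\lambda)}{W(\lambda)}, \qquad
    f_\pm(x,\lambda)=e^{\pm i\lambda x}m_\pm(x,\lambda).
\end{equation*}
Here $f_\pm$ solve $-f''=\lambda^2 f$ away from $\Z$, with jump conditions $f'(j^+)-f'(j^-)=\alpha_j f(j)$. Equivalently, they solve Volterra equations
\begin{equation*}
    m_+(x,\lambda)=1+\sum_{j>x}\frac{e^{2i\lambda(j-x)}-1}{2i\lambda}\,\alpha_j\, m_+(j,\lambda).
\end{equation*}
Iterating (Born series) gives $|m_\pm(x,\lambda)-1|\lesssim \langle\lambda\rangle^{-1}\sum_{j>x}\langle j\rangle|\alpha_j|$ for $x\ge 0$, with the usual growth for $x<0$. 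It also gives that $\partial_\lambda m_\pm$ is controlled by the weighted sums $\sum \langle j\rangle^{2}|\alpha_j|$, or by $\sum\langle j\rangle^{1+\mu}|\alpha_j|$ in a Hölder sense.

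Next we split the $\lambda$-integral with a smooth cutoff $\chi(\lambda)$ supported near $0$.

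\textbf{High energy part.} We write $m_\pm = 1 + (m_\pm-1)$ and use that $(m_\pm(x,\cdot)-1)$, together with $\lambda/W(\lambda) - \tfrac{1}{2i}$, have Fourier transforms in $L^1$ uniformly in $x$ on the relevant half-lines. This comes from the explicit Born series, where each term is a finite sum of exponentials $e^{2i\lambda(j_1-x)}$ weighted by $\alpha_j$'s. The main term is the free kernel $e^{i\lambda|x-y|}$, whose oscillatory integral is $|t|^{-1/2}$. The remainder is handled by the standard lemma that $\sup_a|\int e^{it\lambda^2}e^{i\lambda a}\phi(\lambda)d\lambda|\lesssim |t|^{-1/2}\|\hat\phi\|_{L^1}$, since convolution with Wiener-algebra factors preserves the bound. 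The other configurations ($x<0<y$, etc.) are reduced to this one using transmission and reflection coefficients $T=2i\lambda/W$ and $R_\pm$, exactly as in Goldberg--Schlag.

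\textbf{Low energy part.} This is the main obstacle. In the non-resonant case $W(0)\neq 0$, so $\lambda/W(\lambda)$ vanishes at $0$ and is smooth enough. We apply the same Wiener-algebra lemma to $\chi(\lambda)\lambda m_+(x,\lambda)m_-(y,\lambda)/W(\lambda)$. We only need $\partial_\lambda$ of this symbol in a Hölder-$\mu$ sense, which is why $\langle j\rangle^{1+\mu}\alpha_j\in\ell^1$ suffices; we would use Lemma-type bounds $\|\widehat{\chi\phi}\|_{L^1}\lesssim \|\phi\|_{L^\infty}+\|\partial_\lambda \phi\|_{C^{\mu}}$-type estimates (Bernstein). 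The uniformity in $x,y$ is obtained by working with the weighted Volterra kernels $\min(\langle x\rangle,\langle j\rangle)$-type bounds on $\partial_\lambda m_\pm$ and by switching between $f_\pm$ and $f_\mp$ representations on the half-line where $x$ lies.

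In the resonant case $W(0)=0$ under $\langle j\rangle^2\alpha_j\in\ell^1$, we first show $W(\lambda)=\lambda W'(0)+o(\lambda)$ with $W'(0)\neq 0$, using a Wronskian identity $|W|^2\ge 4\lambda^2$ from $|T|\le 1$. We then have $\lambda/W$ bounded with an integrable derivative, and the extra moment gives the $C^1$ control of $m_\pm$ needed to repeat the argument. Summing the low and high energy parts yields \eqref{eqMain}.
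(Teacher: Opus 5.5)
The gap is in the low-energy step. You propose to place the low-energy symbols in the Wiener algebra through regularity in $\lambda$, namely $\partial_\lambda$ of the symbol in $C^\mu$ plus a Bernstein-type bound. Under $\alpha\in\ell^{1,1+\mu}(\Z)$ with $\mu<1$ this regularity is not available.

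In the Volterra equation the kernel $K(\lambda,a)=\frac{e^{2i\lambda a}-1}{2i\lambda}$ satisfies $\partial_\lambda K(0,a)=ia^2$. So bounding $\partial_\lambda\operatorname{m}_\pm(\lambda,x)$ requires $\sum_j j^2|\alpha_j|<\infty$; this is why Lemma~\ref{lemBound}(2) assumes $\alpha\in\ell^{1,2}(\Z)$. Without it, $\partial_\lambda\operatorname{m}_\pm$ and $\operatorname{W}'$ are in general unbounded near $\lambda=0$.

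For $y<0<x$ the factor $\lambda$ from Stone's formula happens to rescue you, though your text does not use it. Indeed $\lambda\partial_\lambda K=ae^{2i\lambda a}-K=O(a)$, so $\lambda\partial_\lambda\operatorname{m}_\pm$ and $\lambda\operatorname{W}'$ are bounded under $\ell^{1,1}$.

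In the same-side configuration, however, you must expand $\operatorname{f}_-=\operatorname{a}_-\operatorname{f}_+(\lambda,\cdot)+\operatorname{b}\,\operatorname{f}_+(-\lambda,\cdot)$. The coefficient $\lambda\operatorname{a}_-(\lambda)/\operatorname{W}(\lambda)$, essentially a reflection coefficient, carries no vanishing factor: $\lambda\operatorname{a}_-(0)\neq0$, and its derivative contains $\lambda\operatorname{a}_-\operatorname{W}'/\operatorname{W}^2$. The accompanying factors $\operatorname{m}_+(\pm\lambda,\cdot)$ have no $\lambda$ in front either. These symbols are in general only Hölder continuous at $0$, and for $\mu\le1/2$ Bernstein's theorem gives nothing.

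The resonant case has the same defect. $\ell^{1,2}$ makes $\operatorname{W}$ of class $C^1$ but does not make $\operatorname{W}(\lambda)/\lambda$ differentiable at $0$. So ``$\lambda/\operatorname{W}$ bounded with integrable derivative'' is not established, and an $L^1$ derivative would not by itself give an integrable Fourier transform.

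The missing idea is to bypass $\lambda$-regularity and argue on the Fourier side with Wiener's inversion lemma, as the paper does.

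First, iterating the Volterra equation writes $\mathcal F(\operatorname{m}_\pm(\lambda,x)-1)$ as sums of $\prod_l\alpha_{j_l}$ times convolutions $\mathbf 1_{[0,j_1-x]}\ast\cdots\ast\mathbf 1_{[0,j_n-j_{n-1}]}$. Their total variation is bounded uniformly on the relevant half-line under $\ell^{1,1}$, with a first moment under $\ell^{1,2}$ (Lemma~\ref{lemFourier}). This puts $\tilde\chi\operatorname{W}$ and $\lambda\operatorname{a}_-$ in the Wiener algebra (Lemma~\ref{lemFBound}).

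Second, Lemma~\ref{lemWiener}, together with $\operatorname{W}\neq0$ on $\R^*$ (Proposition~\ref{prpWronski}) and $\operatorname{W}(0)\neq0$, inverts $\tilde\chi\operatorname{W}$. In the resonant case, $\operatorname{W}(0)=0$ and the first moment let you divide by $\lambda$ on the Fourier side, and $\tilde\chi\operatorname{b}$ is inverted using Proposition~\ref{prpNonZero}.

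You already invoke these Fourier-side bounds in your high-energy paragraph. They hold for all $\lambda$ and are exactly what the low-energy step needs.

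Note also that $\mu$ is not a Hölder exponent in this argument. In the paper it enters only through the limiting absorption principle (Theorem~\ref{thLAP}), to make $\tilde{\H}_\alpha-\tilde{\H}_0$ bounded from $H^{1,-s}(\R)$ to $H^{-1,s}(\R)$ for some $s>1/2$. That underlies Stone's formula and the kernel representation.

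Your high-energy treatment via Jost solutions is a workable alternative to the paper's Born series on the Friedrichs extensions (Lemma~\ref{lemBorn}). It requires showing that $\lambda/\operatorname{W}(\lambda)$ minus its limit lies in the Wiener algebra for $|\lambda|$ large, e.g. by a Neumann series in $\operatorname{b}-1$. Finally, $\operatorname{f}_+(\lambda,x)\operatorname{f}_-(\lambda,y)/\operatorname{W}(\lambda)$ is the kernel for $x>y$, not $x<y$.
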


In the previous result, we observe that the presence of the projection $\operatorname{P}$ is necessary as the operator $\H_{\alpha}$ might possess bound states.

Our approach is based on a high/low energy decomposition of the dispersive estimate, following the strategy developed in \cite{GoSc04}. In comparison with \cite[Theorem 1]{GoSc04}, the assumption $V(1+|\cdot|)\in L^1(\mathbb{R})$ naturally corresponds, in our discrete setting, to the condition $(j^{1+\mu}\alpha_j)_{j\in\mathbb{Z}}\in\ell^1(\mathbb{Z})$, for some $\mu>0$. Likewise, in the absence of the no-resonance-at-zero hypothesis, the requirement $V(1+|\cdot|)^2\in L^1(\mathbb{R})$ translates into $(j^2\alpha_j)_{j\in\mathbb{Z}}\in\ell^1(\mathbb{Z})$.

The high-energy contribution, stated in Proposition \ref{prpHighEn}, is treated through an analysis of the Born series, which represents the resolvent associated with $\H_{\alpha}$ as a perturbation of the resolvent of $\H_{0}$. This requires a careful study of the resolvent and, in particular, the establishment of a limiting absorption principle, which is carried out in Theorem \ref{thLAP}. Moreover, particular care is needed when invoking the resolvent identity between $\H_{\alpha}$ and $\H_{0}$, since the two operators do not share the same domain. To overcome this difficulty, we work instead with the resolvents of their respective Friedrichs extensions, for which the identity can be justified rigorously.

The low-energy contribution, addressed in Proposition \ref{prpLowEn}, relies on a representation of the resolvent kernel in terms of Jost solutions (see Theorem \ref{thResKer}). The construction and detailed analysis of the Jost solutions, including an explicit formula, are provided in Proposition \ref{prpJostSol} and Lemma \ref{lemBound}. The no-resonance-at-zero hypothesis (or the stronger summability conditions $\alpha \in \ell^{1, 2} (\Z)$, see Proposition \ref{prpNonZero}), together with bounds on the Fourier transform of the Jost solutions (see Lemmas \ref{lemFourier} and \ref{lemFBound}) allow us to use the Wiener's lemma, recalled in Lemma \ref{lemWiener}, to control terms in the representation of the resolvent kernel and finally deriving the low-energy part of the estimate.


The paper is organized as follows. In Section \ref{sec2}, we fix the notation and introduce the functional spaces used throughout the paper. We also define the free Laplacian and the perturbed Laplacian operators under consideration and recall some of their basic properties. In Section \ref{sec3}, we establish a limiting absorption principle for the perturbed Laplacian, that is, we define the resolvent operator on the essential spectrum in weighted Sobolev spaces, relying on results of \cite{Ma18}. In Section \ref{sec4}, we prove the existence of Jost solutions associated with the perturbed operator, derive bounds satisfied by these solutions using methods of \cite{DeTr79}, and compute the kernel of the resolvent operator on the essential spectrum in terms of the Jost solutions. Finally, in Section \ref{sec5}, we prove Theorem \ref{thMain} by decomposing the Stone formula into two regimes, namely the low-energy and high-energy regimes, following the approach of \cite{GoSc04}.

\section{Notations and definitions} \label{sec2}

\subsection{Functional spaces}

In this subsection, we define the functional spaces needed in our study, particularly the weighted Sobolev spaces and the Hardy space.

\subsubsection{Weighted sequences} Let $s \in \R$. We define the weighted sequence space $\ell^{1, s} (\Z)$ as
\begin{equation*}
    \ell^{1, s} (\Z)
    = \left\{ v \in \ell_{loc}^1 (\Z): (j^s v_j) \in \ell^1 (\Z) \right\}
\end{equation*}
and equip it with the norm $\| \cdot \|_{\ell^{1, s} (\Z)}$ given by
\begin{equation*}
    \| v \|_{\ell^{1, s} (\Z)}
    = \sum_{j \in \Z} |j|^{s} |v_j|.
\end{equation*}

\subsubsection{Schwartz space} We recall the definition of the Schwartz space by
\begin{equation*}
    \mathcal S (\R)
    = \left\{ f \in C^\infty (\R): \sup_{x \in \R} \left| x^m \partial_x^n f (x) \right| < \infty \text{ for all } m, n \in \N \right\}
\end{equation*}
and note that the inclusion
\begin{equation*}
    \mathcal{S} (\R) \subset L^p (\R)
\end{equation*}
is dense for any $1 \leq p < \infty$.

\subsubsection{Weighted Lebesgue and Sobolev spaces} Let $s \in \R$. We define $L^{2, s} (\R)$ as the Hilbert space
\begin{equation} \label{eqWeight}
    L^{2, s} (\R)
    = \left\{ f \in L_{loc}^2 (\R): x \mapsto \left( 1 + x^2 \right)^\frac{s}{2} f (x) \in L^2 (\R) \right\},
\end{equation}
equipped with the scalar product $\langle \cdot, \cdot \rangle_{L^{2, s} (\R)}$ given by
\begin{equation*}
    \left\langle f, g \right\rangle_{L^{2, s} (\R)}
    = \int_{-\infty}^\infty \left( 1 + x^2 \right)^s f (x) \Bar{g} (x) \, dx.
\end{equation*}
For $k \in \N^*$, we define the \textit{weighted Sobolev spaces} $H^{k, s} (\R)$ as
\begin{equation*}
    H^{k, s} (\R)
    = \left\{ f \in L_{loc}^2 (\R): x \mapsto \left( 1 + x^2 \right)^\frac{s}{2} \partial_x^{j} f (x) \in L^2 (\R), \, 0 \leq j \leq k \right\},
\end{equation*}
equipped with the norm $\| \cdot \|_{H^{k, s} (\R)}$ given by
\begin{equation*}
    \| f \|_{H^{k, s} (\R)}^2
    = \sum_{j = 0}^k \left\| (1 + x^2)^\frac{s}{2} \partial_x^{j} f \right\|_{L^2 (\R)}^2.
\end{equation*}
Equivalently, $H^{k, s} (\R)$ can be defined as
\begin{equation*}
    H^{k, s} (\R)
    = \left\{ f \in L_{loc}^2 (\R): x \mapsto \left( 1 + x^2 \right)^\frac{s}{2} f (x) \in H^k (\R) \right\}.
\end{equation*}
In particular, the norm $\| \cdot \|_{H^{1, -s} (\R)}$ and $\| (1 + x^2)^{-s/2} \cdot \|_{H^1 (\R)}$ are equivalent. The topological dual of $H^{1, -s} (\R)$ is denoted by $H^{-1, s} (\R)$ and is equipped with the duality product
\begin{equation*}
    \left\langle f, g \right\rangle_{H^{-1, s} (\R), H^{1, -s} (\R)}
    = \left\langle (1 + |x|^2)^\frac{s}{2} f, (1 + |x|^2)^{-\frac{s}{2}} g \right\rangle_{H^{-1} (\R), H^1 (\R)}.
\end{equation*}
The continuous inclusions
\begin{equation} \label{eqContInc}
    H^1 (\R)
    \hookrightarrow H^{1, -|s|} (\R)
    \hookrightarrow H^{-1, |s|} (\R)
    \hookrightarrow H^{-1} (\R)
\end{equation}
as well as the dense and continuous inclusions
\begin{equation*} 
    H^{1, |s|} (\R)
    \hookrightarrow L^{2, |s|} (\R)
    \hookrightarrow L^2 (\R)
    \hookrightarrow L^{2, -|s|} (\R)
    \hookrightarrow H^{-1, -|s|} (\R)
\end{equation*}
hold.

\subsubsection{Fourier transform} Let $f \in L^1 (\R)$. Its Fourier transform, denoted as $\mathcal{F} (f)$, is given by, for any $\xi \in \R$,
\begin{equation*}
    \mathcal{F} (f) (\xi) = \int_{-\infty}^\infty f(x) e^{-2 \pi i x \xi} \,  \, dx.
\end{equation*}
By abuse of notation, we sometime denote by $\xi \mapsto \mathcal{F} (f (\lambda, x)) (\xi)$ the Fourier transform in $\lambda$ of the parametrized function $f (\cdot, x): \lambda \mapsto f (\lambda, x)$. We recall that, for any $x \in \R$,
\begin{equation*}
    f (x)
    = \int_{-\infty}^\infty \mathcal{F} (f) (\xi) e^{2 \pi i x \xi} \, d \xi.
\end{equation*}

We define the complex upper half-plane $\C^+$ by
\begin{equation*}
    \C^+
    = \left\{ z \in \C : \operatorname{Im} (z) > 0 \right\}.
\end{equation*}

\subsubsection{Hardy space} The \textit{Hardy space} $\mathcal{H} (\C^+)$ is defined as
\begin{equation*}
    \mathcal{H} (\C^+)
    = \left\{ f: \C^+ \to \C \text{ holomorphic}: \sup_{y > 0} \int_{-\infty}^\infty \left| f (x + i y) \right|^2 \,  \, dx < \infty \text{ for any } x \in \R \right\}.
\end{equation*}
Functions $f$ in $\mathcal{H} (\C^+)$ admit non-tangential boundary values on $\R$, still denoted by $f$, which belong to $L^2(\R)$. The space $\mathcal{H} (\C^+)$ can be identified (via boundary values) with the closed subspace of $L^2(\R)$ consisting of functions whose Fourier transform is supported in $\R^+$. For more background on Hardy spaces, see \cite{Du70}.

\subsubsection{Borel measures and total variation norm} Let $\mathcal{M}$ denote the space of finite complex Borel measures on $\mathbb{R}$. For $\mu \in \mathcal{M}$, the total variation measure $|\mu|$ is defined by
\begin{equation*}
    \left| \mu \right| (E)
    = \sup \left\{ \sum_{j = 1}^\infty |\mu(E_j)|: E = \bigsqcup_{j = 1}^\infty E_j, \, E_j \text{ Borel} \right\}.
\end{equation*}
We equip $\mathcal{M}$ with the \textit{total variation norm} $\| \cdot \|_{\mathcal{M}}$, given by
\begin{equation*}
    \| \mu \|_{\mathcal{M}}
    = \left| \mu \right|(\mathbb{R}).
\end{equation*}
In particular, if $\mu$ is absolutely continuous with respect to the Lebesgue measure and admits a density $f \in L^1(\mathbb{R})$, then
\begin{equation*}
    \| \mu \|_{\mathcal{M}}
    = \| f \|_{L^1(\R)}.
\end{equation*}
In this case, by a slight abuse of notation, we identify the measure $\mu$ with its density $f$. We sometimes denote by $\| f (\xi) \|_{\mathcal{M}}$ the norm $\| f \|_{\mathcal{M}}$ of the function $\xi \mapsto f (\xi)$.

\subsubsection{Bounded linear operators} Finally, let $\mathcal{H}_1$ and $\mathcal{H}_2$ be Hilbert spaces. We denote by $B (\mathcal{H}_1, \mathcal{H}_2)$ the space of bounded linear operators from $\mathcal{H}_1$ to $\mathcal{H}_2$, that is,
\begin{equation*}
    B (\mathcal{H}_1, \mathcal{H}_2)
    = \left\{ T : \mathcal{H}_1 \to \mathcal{H}_2 \text{ linear} : \| T \|_{B (\mathcal{H}_1, \mathcal{H}_2)} < \infty \right\},
\end{equation*}
where the operator norm is defined by
\begin{equation*}
    \| T \|_{B (\mathcal{H}_1, \mathcal{H}_2)}
    = \sup \left\{ \| T f \|_{\mathcal{H}_2} : \| f \|_{\mathcal{H}_1} = 1 \right\}.
\end{equation*}

\subsection{Schrödinger operators}

In this subsection, we define the Schrödinger operators studied in this work: the free Laplacian $\H_0$ and the Laplacian perturbed with a short-range array of Delta potentials $\H_{\alpha}$.

\subsubsection{The free Laplace operator} We define the so-called \textit{free Laplacian} as the linear self-adjoint operator $\H_0 : \mathcal{D} (\H_0) \subset L^2 (\R) \to L^2 (\R)$ acting as
\begin{equation*}
    \H_0 f
    = - \partial_{xx} f
\end{equation*}
for $f$ in the domain $\mathcal{D} (\H_0) = H^2 (\R)$. The bilinear form $\operatorname{B}_0: \mathcal{D} (\operatorname{B}_0)^2 \subset L^2 (\R) \to \C$ associated with $\H_{0}$ is given by
\begin{equation*}
    \operatorname{B}_0 (f, g)
    = \langle \partial_x f, \partial_x g \rangle_{L^2 (\R)}.
\end{equation*}
for $f, g \in \mathcal{D} (\operatorname{B}_0) = H^1 (\R)$. The essential spectrum of $\H_0$ is given by
\begin{equation*}
    \sigma_{ess} (\H_0)
    = [0, \infty)
\end{equation*}
and its discrete spectrum is given by
\begin{equation*}
    \sigma_{dis} (\H_0)
    = \emptyset.
\end{equation*}

For $z \in \mathbb C \setminus \sigma (\H_0)$, we define the resolvent operator $\operatorname{R}_0 (z) \in B (L^2 (\R), L^2 (\R))$ as
\begin{equation*}
    \operatorname{R}_0 (z)
    = \left( \H_{0} - z \right)^{-1}.
\end{equation*}
Its associated kernel $\operatorname{G}_0 (z): \R^2 \to \C$ is given by
\begin{equation*}
    \operatorname{G}_0 (z)(x, y)
    = \frac{i}{2 \sqrt{z}} e^{i \sqrt{z} |x - y|}.
\end{equation*}
For $s > 1/2$ and $\lambda > 0$, we can define the operator $\operatorname{R}_0 (\lambda^2 \pm i 0) \in B (L^{2, s} (\R), H^{2, -s} (\R))$ as
\begin{equation*}
    \operatorname{R}_0 (\lambda^2 \pm i 0)
    =  \lim_{\varepsilon \to 0^+} \operatorname{R}_0 (\lambda^2 \pm i \varepsilon) \text{ in } B (L^{2, s} (\R), H^{2, -s} (\R)),
\end{equation*}
see \cite[Theorem $4.1$]{Ag75}. The kernel $\operatorname{G}_0 (\lambda^2 \pm i 0): \R^2 \to \C$ of $\operatorname{R}_0 (\lambda^2 \pm i 0)$ is given by
\begin{equation} \label{eqKerFun}
    \operatorname{G}_0 (\lambda^2 \pm i 0)(x, y)
    = \pm \frac{i}{2 \lambda} e^{\pm i \lambda |x - y|}.
\end{equation}

Let $(e^{i t \H_0})_{t \in \R}$ be the unitary group generated by $\H_0$. Then, for any $t \in \R^*$ and $f \in L^1 (\R)$, the dispersive estimate
\begin{equation} \label{eqFLDis}
    \left\| e^{i t \H_0} f \right\|_{L^\infty (\R)}
    \lesssim |t|^{-1/2} \| f \|_{L^1 (\R)}
\end{equation}
holds, see for example \cite[Theorem $3.1$]{Su07}.

\subsubsection{The Laplace operator with an infinite number of point interactions} Let $\alpha \in \ell^1 (\Z)$ be a real-valued sequence. We define the linear self-adjoint operator $\H_{\alpha} : \mathcal{D} (\H_\alpha) \subset L^2 (\R) \to L^2 (\R)$ as the operator acting as
\begin{equation*}
    \H_{\alpha} f (x)
    = -\partial_{xx} f (x), \quad x \in \R \setminus \Z,
\end{equation*}
for $f$ in the domain
\begin{equation*}
    \mathcal{D} (\H_\alpha)
    = \left\{ f \in H^1 (\R) : \text{for all } j \in \mathbb Z, \,
    \begin{cases}
        f |_{(j, j+1)} \in H^2 (j, j+1), \\
        \partial_x f (j+) - \partial_x f (j-) = \alpha_j f (j)
    \end{cases} 
    \right\}.
\end{equation*}
The bilinear form $\operatorname{B}_\alpha: \mathcal{D} (\operatorname{B}_\alpha)^2 \subset L^2 (\R) \to \C $ associated with $\H_{\alpha}$ is given by
\begin{equation} \label{eqBlnFrm}
    \operatorname{B}_\alpha (f, g)
    = \left\langle \partial_x f, \partial_x g \right\rangle_{L^2 (\R)} + \sum_{j \in \mathbb Z} \alpha_j f (j) \overline{g} (j)
\end{equation}
for $f, g \in \mathcal{D} (\operatorname{B}_\alpha) = H^1 (\R)$.

We now determine the spectrum $\sigma(\H_\alpha)$ of $\H_\alpha$. The essential spectrum $\sigma_{ess} (\H_\alpha)$ is given by
\begin{equation*}
    \sigma_{ess} (\H_\alpha)
    = [0, \infty).
\end{equation*}
Indeed, since $\alpha \in \ell^1 (\Z)$, the operator $\H_\alpha$ is relatively compact with respect to $\H_0$ in the sense of Kato-Rellich. By Weyl's theorem on relatively compact perturbations, $\H_\alpha$ and $\H_0$ share the same essential spectrum. Moreover, as $\alpha$ is real-valued and belongs to $\ell^1 (\Z)$, any eigenvalue of $\H_\alpha$ outside $[0, \infty)$ is isolated and of finite multiplicity. The quadratic form
\begin{equation*}
    \operatorname{B}_\alpha (f, f)
    = \left\| \partial_x f \right\|_{L^2 (\R)}^2 + \sum_{j \in \mathbb{Z}} \alpha_j |f (j)|^2
\end{equation*}
is bounded from below by the inclusion $H^1 (\R) \hookrightarrow L^\infty (\R)$. Consequently, the discrete spectrum $\sigma_{dis} (\H_\alpha)$ consists of a countable set of negative eigenvalues and is bounded from below.

For $z \in \mathbb C \setminus \sigma (\H_\alpha)$, we define the resolvent operator $\operatorname{R}_\alpha (z) \in B (L^2 (\R), L^2 (\R))$ as
\begin{equation*}
    \operatorname{R}_\alpha (z)
    = \left( \H_{\alpha} - z \right)^{-1}.
\end{equation*}

\begin{remark}
    Observe that we cannot apply the resolvent identity to compare $\operatorname{R}_0 (z)$ and $\operatorname{R}_\alpha (z)$ as $\H_0$ and $\H_{\alpha}$ are not defined on the same domain.
\end{remark}


\section{Limiting absorption principle} \label{sec3}

This section aims to establish the limiting absorption principle for $\H_{\alpha}$, stated in the following theorem. We also give the asymptotic behavior of $\operatorname{R}_\alpha (\lambda^2 \pm i 0) f$ for $f \in L^{2, s} (\R)$, as it will be needed in order to compute its kernel.

\begin{theorem} \label{thLAP}
    Assume that there exists $\mu > 0$ such that $\alpha \in \ell^{1, 1 + \mu} (\Z)$. Let $\lambda \geq 0$ and $s \in (1/2, (1 + \mu)/2)$. Then, the operators $\operatorname{R}_\alpha (\lambda^2 \pm i 0) \in B (L^{2, s} (\R), L^{2, -s} (\R))$ given by
    \begin{equation} \label{eqLAP1}
        \operatorname{R}_\alpha (\lambda^2 \pm i 0)
        =  \lim_{\varepsilon \to 0^+} \operatorname{R}_\alpha (\lambda^2 \pm i \varepsilon) \text{ in } B (L^{2, s} (\R), L^{2, -s} (\R))
    \end{equation}
    are well-defined. Furthermore, for $f \in L^{2, s} (\R)$, $\operatorname{R}_\alpha (\lambda^2 + i 0) f$ satisfies the outgoing condition
    \begin{align}
        & (\partial_x - i \lambda) \operatorname{R}_\alpha (\lambda^2 + i 0) f (x) \to 0 \text{ as } x \to \infty, \label{eqOutG1} \\
        & (\partial_x + i \lambda) \operatorname{R}_\alpha (\lambda^2 + i 0) f (x) \to 0 \text{ as } x \to -\infty; \label{eqOutG2}
    \end{align}
    and $\operatorname{R}_\alpha (\lambda^2 - i 0) f$ satisfies the incoming condition
    \begin{align}
        & (\partial_x + i \lambda) \operatorname{R}_\alpha (\lambda^2 - i 0) f (x) \to 0 \text{ as } x \to \infty, \label{eqInC1} \\
        & (\partial_x - i \lambda) \operatorname{R}_\alpha (\lambda^2 - i 0) f (x) \to 0 \text{ as } x \to -\infty. \label{eqInC2}
    \end{align}
\end{theorem}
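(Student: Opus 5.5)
We will prove the statement by a perturbative argument at the level of quadratic forms, since the resolvent identity is not available between $\H_0$ and $\H_\alpha$ as operators on $L^2$ (see the Remark above). Write the delta array as the map $V_\alpha : H^1(\R) \to H^{-1}(\R)$, $\langle V_\alpha f, g\rangle = \sum_j \alpha_j f(j)\overline{g}(j)$. Then the form $\operatorname{B}_\alpha$ in \eqref{eqBlnFrm} defines $\H_\alpha = \H_0 + V_\alpha$ as a map $H^1 \to H^{-1}$, and $\H_\alpha$ is its part in $L^2$. In $B(H^{-1}, H^1)$ this gives the second resolvent identity
\begin{equation*}
\operatorname{R}_\alpha(z) = \operatorname{R}_0(z) - \operatorname{R}_0(z) V_\alpha \operatorname{R}_\alpha(z), \qquad \operatorname{R}_\alpha(z) = \left(I + \operatorname{R}_0(z) V_\alpha\right)^{-1}\operatorname{R}_0(z).
\end{equation*}
These hold for $z \notin \sigma(\H_\alpha)$.

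\textbf{Step 1: $V_\alpha$ maps between the right weighted spaces.} We show $V_\alpha \in B(H^{1,-s}(\R), H^{-1,s}(\R))$, and that it is compact, for $s < (1+\mu)/2$. For $f \in H^{1,-s}$ we have $|f(j)| \lesssim (1+j^2)^{s/2}\|f\|_{H^{1,-s}}$. Testing against $g \in H^{1,-s}$ gives the bound
\begin{equation*}
|\langle V_\alpha f, g\rangle| \lesssim \sum_j |\alpha_j|(1+j^2)^{s}\,\|f\|\,\|g\| \lesssim \|\alpha\|_{\ell^{1,2s}}\|f\|\,\|g\|,
\end{equation*}
where $2s < 1+\mu$ ensures finiteness. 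Compactness follows by truncating to $|j|\le N$, which gives finite-rank operators, with tails bounded by $\sum_{|j|>N}|\alpha_j|(1+j^2)^s \to 0$.

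\textbf{Step 2: free limiting absorption principle.} We use Agmon's theorem, as recalled above, in the form $\operatorname{R}_0(\lambda^2\pm i0) \in B(H^{-1,s}, H^{1,-s})$ for $\lambda>0$, $s>1/2$, with norm continuity in $z$ up to the real axis. This also follows from the explicit kernel \eqref{eqKerFun}. Hence $K(z) = \operatorname{R}_0(z)V_\alpha$ is a compact operator on $H^{1,-s}$, continuous on $\overline{\C^\pm}$ near $\lambda^2$.

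\textbf{Step 3: invertibility of $I+K(\lambda^2\pm i0)$.} By the Fredholm alternative it suffices to show that $\ker(I+K(\lambda^2\pm i0)) = \{0\}$. Take $u$ in the kernel. Then $u$ solves $-u''+\sum_j\alpha_j\delta_j u = \lambda^2 u$ and has the representation $u = -\operatorname{R}_0(\lambda^2+i0)V_\alpha u$. Using the kernel \eqref{eqKerFun}, $u$ is outgoing, $u(x) \sim c_\pm e^{i\lambda|x|}$. Taking the imaginary part of $\langle V_\alpha u, u\rangle$, which is real since $\alpha$ is real, forces $c_\pm = 0$. This gives $\sum_j \alpha_j e^{\mp i\lambda j}u(j)=0$, so $u$ coincides with $\operatorname{R}_0$ applied to a compactly-decaying source with vanishing flux. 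Then $u$ decays, hence $u \in L^2$, and $u$ is an embedded eigenfunction at $\lambda^2>0$. This is impossible: outside the support $u$ solves $u''=-\lambda^2u$ on each interval $(j,j+1)$, and the Jost/ODE uniqueness argument propagates $u\equiv0$ from infinity (an $L^2$ solution of a free equation near $\pm\infty$ must vanish). I expect this step, together with handling $\lambda = 0$, to be the main obstacle. At $\lambda=0$ the free resolvent kernel is singular ($1/\lambda$), so one cannot argue directly. Instead we would rescale: write $\operatorname{R}_0(\lambda^2)V_\alpha$ as (rank one) $\frac{i}{2\lambda}\langle\cdot\rangle$ plus a bounded part with kernel $-\tfrac12|x-y|$, which requires $s>3/2$-type weights, i.e.\ $\alpha \in \ell^{1,1+\mu}$. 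Then invertibility at $\lambda=0$ reduces to the absence of a zero-energy resonance, which we would treat through the Jost-solution analysis later if needed.

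\textbf{Step 4: conclusion and radiation conditions.} Continuity of $z\mapsto (I+K(z))^{-1}$ up to the axis gives the limit \eqref{eqLAP1} in $B(H^{-1,s},H^{1,-s})$, and hence in $B(L^{2,s},L^{2,-s})$ by \eqref{eqContInc}. For the outgoing conditions \eqref{eqOutG1}--\eqref{eqOutG2}, write
\begin{equation*}
\operatorname{R}_\alpha(\lambda^2+i0)f = \operatorname{R}_0(\lambda^2+i0)\left(f - V_\alpha u\right), \qquad u = \operatorname{R}_\alpha(\lambda^2+i0)f.
\end{equation*}
Applying $(\partial_x \mp i\lambda)$ to the kernel $\frac{i}{2\lambda}e^{i\lambda|x-y|}$ kills exactly the contributions from $y<x$ (respectively $y>x$). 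What remains is a tail integral over $y>x$, which tends to $0$ by dominated convergence since $f - V_\alpha u$ is a finite measure ($f\in L^{2,s}\subset L^1$ for $s>1/2$, and $\sum|\alpha_j||u(j)|<\infty$). The incoming conditions \eqref{eqInC1}--\eqref{eqInC2} follow by complex conjugation.
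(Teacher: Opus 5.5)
For $\lambda>0$ your argument is correct, and it takes a genuinely different route from the paper. The paper first checks that $\tilde{\H}_\alpha-\tilde{\H}_0$ is compact $H^1(\R)\to H^{-1}(\R)$ and bounded $H^{1,-s}(\R)\to H^{-1,s}(\R)$ (Lemmas~\ref{lemComp} and~\ref{lemResExtId}, essentially your Step~1). It then imports the limiting absorption principle for both Friedrichs extensions from the abstract results of \cite{Ma18}, passes to the limit in the resolvent identity, and reads off the radiation conditions from the explicit kernel \eqref{eqKerFun}, as in your Step~4. You instead run the Agmon--Kato--Kuroda scheme: weighted compactness of $K(z)$, the Fredholm alternative, and the flux identity for outgoing solutions. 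This costs more work but is self-contained and exploits the one-dimensional kernel. It also makes explicit the absence of outgoing zero modes and of embedded positive eigenvalues, which the citation leaves implicit.

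One step needs repair: ``outside the support'' has no meaning when infinitely many $\alpha_j\neq0$. Once $\sum_j\alpha_j e^{-i\lambda j}u(j)=0$, the representation becomes $u(x)=\lambda^{-1}\sum_{j>x}\alpha_j\sin(\lambda(j-x))\,u(j)$. Hence $u$ is bounded, and iterating this Volterra relation gives $|u(x)|\le\|u\|_{L^\infty(\R)}(\|\alpha\|_{\ell^1(\Z)}/\lambda)^n/n!\to0$. No $L^2$ or unique-continuation argument is needed.

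The genuine gap is $\lambda=0$, which you only sketch. Your ``i.e.'' in ``$s>3/2$-type weights, i.e.\ $\alpha\in\ell^{1,1+\mu}$'' also conflates the weight on $f$ with the decay of $\alpha$: $s>3/2$ is incompatible with $s<(1+\mu)/2$ unless $\mu>2$. But this gap cannot be closed, because the statement is false at $\lambda=0$. Take $\alpha=0$, which is admissible, and $0\le f\in C_c^\infty(\R)$ with $f\neq0$. Then $\langle\operatorname{R}_0(i\eps)f,f\rangle=\frac{i}{2\sqrt{i\eps}}\iint e^{i\sqrt{i\eps}|x-y|}f(x)f(y)\,dx\,dy$ has modulus asymptotic to $(\int f)^2/(2\sqrt{\eps})\to\infty$. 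So there is no limit in $B(L^{2,s}(\R),L^{2,-s}(\R))$ for any $s$. The same failure occurs whenever there is a zero-energy resonance. In the non-resonant case the limiting kernel $\operatorname{f}_-(0,x)\operatorname{f}_+(0,y)/\operatorname{W}(0)$, $x<y$, grows linearly, so weights beyond $s>1/2$ are needed, as you suspected.

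The paper's proof does not cover $\lambda=0$ either. Proposition~\ref{prpResExt}(1) at $\lambda=0$ is exactly the false free statement above, and its derivation of the radiation conditions divides by $\lambda$. The correct statement is for $\lambda>0$, which is all that Theorem~\ref{thResKer} and Lemma~\ref{lemBorn} use. With that restriction and the repair above, your proof is complete.
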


To do so, we first establish the limiting absorption principle for the Friedrichs extension of $\H_{\alpha}$ over $H^{-1} (\R)$, with an extensive use of results from \cite{Ma18}. See \cite[Chapter VI, Section $3$]{Ka66} for properties of Friedrichs extension.

We first define the self-adjoint operator $\tilde{\H}_0 : H^{-1} (\R) \to H^{-1} (\R)$ as the Friedrichs extension of $\H_0$ over $H^{-1} (\R)$ with domain $D(\tilde{\H}_0) = H^1 (\R)$ and acting as
\begin{equation*}
    \left\langle \tilde{\H}_0 f, g \right\rangle_{H^{-1} (\R), H^1 (\R)}
    = \operatorname{B}_0 (f, g)
\end{equation*}
for $f, g \in H^1 (\R)$. We have
\begin{equation*}
    \sigma_{ess} (\tilde{\H}_0)
    = \sigma_{ess} (\H_0)
    = [0, \infty),
    \quad
    \sigma_{dis} (\tilde{\H}_0)
    = \sigma_{dis} (\H_0)
    = \emptyset.
\end{equation*}
For $s > 1/2$ and $z \in \C \setminus [0, \infty)$, the resolvent operator $\tilde{\operatorname{R}}_0 (z) \in B (H^{-1, s} (\R), H^{1, -s} (\R))$ given by
\begin{equation*}
    \tilde{\operatorname{R}}_0 (z)
    = (\tilde{\H}_0 - z)^{-1}
\end{equation*}
is well-defined and, for $w \in \C \setminus [0, \infty)$, the resolvent identity
\begin{equation} \label{eqResIdExtFL}
    \tilde{\operatorname{R}}_0 (z)
    = \tilde{\operatorname{R}}_0 (w) + (z - w) \tilde{\operatorname{R}}_0 (z) \tilde{\operatorname{R}}_0 (w)
\end{equation}
holds. Furthermore, $\tilde{\operatorname{R}}_0 (z) \, \delta \in H^1 (\R)$ satisfies, for $x \in \R$,
\begin{equation} \label{eqKerDis}
    \tilde{\operatorname{R}}_0 (z) \, \delta (x)
    = \frac{i}{2 \sqrt{z}} e^{i \sqrt{z} |x|}.
\end{equation}

Similarly, we define the self-adjoint operator $\tilde{\H}_\alpha : H^{-1} (\R) \to H^{-1} (\R)$ as the Friedrichs extension of $\H_{\alpha}$ over $H^{-1} (\R)$ with domain $D(\tilde{\H}_\alpha) = H^1 (\R)$ and acting as
\begin{equation*}
    \left\langle \tilde{\H}_\alpha f, g \right\rangle_{H^{-1} (\R), H^1 (\R)}
    = \operatorname{B}_\alpha (f, g)
\end{equation*}
for $f, g \in H^1 (\R)$. We have
\begin{equation*}
    \sigma_{ess} (\tilde{\H}_\alpha)
    = \sigma_{ess} (\H_\alpha)
    = [0, \infty),
    \quad
    \sigma_{dis} (\tilde{\H}_\alpha)
    = \sigma_{dis} (\H_\alpha).
\end{equation*}
For $z \in \C \setminus \sigma (\H_\alpha)$, the resolvent operator $\tilde{\operatorname{R}}_\alpha (z) \in B (H^{-1} (\R), H^1 (\R))$ is given by
\begin{equation*}
    \tilde{\operatorname{R}}_\alpha (z)
    = (\tilde{\H}_\alpha - z)^{-1}.
\end{equation*}
By the inclusion \eqref{eqContInc}, we also have that $\tilde{\operatorname{R}}_\alpha (z) \in B (H^{-1, s} (\R), H^{1, -s} (\R))$ for  $s > 1/2$.

Observe that $\tilde{\H}_0$ and $\tilde{\H}_\alpha$ share the domain $H^1 (\R)$. We may therefore regard $\tilde{\H}_\alpha - \tilde{\H}_0$ as an operator defined on this common domain, given by
\begin{equation*}
    (\tilde{\H}_\alpha - \tilde{\H}_0) f
    = \sum_{j \in \Z} \alpha_j f (j) \delta (\cdot - j)
\end{equation*}
for $f \in H^1 (\R)$. We also define the operator $(1 + x^2)^{(1 + \mu)/2} (\tilde{\H}_\alpha - \tilde{\H}_0)$ as
\begin{equation*}
    (1 + x^2)^{(1 + \mu)/2} (\tilde{\H}_\alpha - \tilde{\H}_0) f
    = \sum_{j \in \Z} \left( 1 + |j|^2 \right)^\frac{1 + \mu}{2} \alpha_j f (j) \delta(\cdot - j).
\end{equation*}
for $f \in H^1 (\R)$. We establish that those operators are well-defined in the following lemma, and that $\tilde{\H}_\alpha - \tilde{\H}_0$ is symmetric and compact.

\begin{lemma} \label{lemComp}
    Assume that there exists $\mu > 0$ such that $\alpha \in \ell^{1, 1 + \mu} (\Z)$. The following statements hold.
    \begin{enumerate}
        \item The operator $\tilde{\H}_\alpha - \tilde{\H}_0 \in B (H^{1} (\R), H^{-1} (\R))$ is well-defined, symmetric and compact.
        \item The operator $(1 + x^2)^{(1 + \mu)/2} (\tilde{\H}_\alpha - \tilde{\H}_0) \in B (H^{1} (\R), H^{-1} (\R))$ is well-defined. 
    \end{enumerate}
\end{lemma}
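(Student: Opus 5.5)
The proof rests on two facts: the Sobolev embedding $H^1(\R)\hookrightarrow L^\infty(\R)\cap C(\R)$, and the fact that translated deltas have uniformly bounded $H^{-1}$ norm. For any $j\in\Z$ and $g\in H^1(\R)$ we have $|\langle \delta(\cdot-j),g\rangle| = |g(j)|\le C\|g\|_{H^1}$, so $\|\delta(\cdot-j)\|_{H^{-1}}\le C$ with $C$ independent of $j$.

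\textbf{Boundedness.} For $f\in H^1(\R)$, the series $\sum_j \alpha_j f(j)\delta(\cdot-j)$ converges absolutely in $H^{-1}(\R)$:
\begin{equation*}
\sum_{j\in\Z}|\alpha_j||f(j)|\,\|\delta(\cdot-j)\|_{H^{-1}}\le C^2\|\alpha\|_{\ell^1}\|f\|_{H^1}.
\end{equation*}
This gives well-definedness and boundedness in (1); we only need $\alpha\in\ell^1$, which follows from $\alpha\in\ell^{1,1+\mu}$ together with the $j=0$ term. For (2) we argue the same way with weights $(1+j^2)^{(1+\mu)/2}|\alpha_j|$. Their sum is finite because $(1+j^2)^{(1+\mu)/2}\lesssim 1+|j|^{1+\mu}$, so it is controlled by $|\alpha_0|$ plus $\|\alpha\|_{\ell^{1,1+\mu}}$.

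\textbf{Symmetry.} For $f,g\in H^1$,
\begin{equation*}
\langle(\tilde{\H}_\alpha-\tilde{\H}_0)f,g\rangle_{H^{-1},H^1}=\sum_j\alpha_j f(j)\overline{g}(j).
\end{equation*}
Since $\alpha$ is real, this expression is conjugate-symmetric in $(f,g)$. Equivalently, it equals $\operatorname{B}_\alpha(f,g)-\operatorname{B}_0(f,g)$, the difference of two Hermitian forms.

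\textbf{Compactness.} We approximate by finite-rank operators. Let $T_N f=\sum_{|j|\le N}\alpha_j f(j)\delta(\cdot-j)$. Each map $f\mapsto f(j)$ is a bounded linear functional on $H^1$, so $T_N$ has rank at most $2N+1$. The same estimate as above gives
\begin{equation*}
\|(\tilde{\H}_\alpha-\tilde{\H}_0)-T_N\|_{B(H^1,H^{-1})}\le C^2\sum_{|j|>N}|\alpha_j|\to 0.
\end{equation*}
A norm limit of finite-rank operators is compact, which completes (1).

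There is no real obstacle. The only point needing care is the uniform-in-$j$ bound on $\|\delta(\cdot-j)\|_{H^{-1}}$, which is immediate from translation invariance of the $H^1$ norm.
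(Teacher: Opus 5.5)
Your proof is correct. The boundedness parts of (1) and (2) match the paper's argument: the paper pairs against $g\in H^1(\R)$ and uses $H^1(\R)\hookrightarrow L^\infty(\R)$, which is exactly your uniform bound on $\|\delta(\cdot-j)\|_{H^{-1}}$. Your compactness and symmetry arguments, however, take a different route.

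For compactness, the paper factors $\tilde{\H}_\alpha-\tilde{\H}_0=A\circ D_\alpha\circ S$ through $\ell^2(\Z)$:
\begin{itemize}
\item the sampling map $Sf=(f(j))_{j}$ is bounded from $H^1(\R)$ to $\ell^2(\Z)$, thanks to the local estimate $|f(j)|^2\lesssim\|f\|_{H^1(j,j+1)}^2$;
\item the synthesis map $Av=\sum_j v_j\delta(\cdot-j)$ is bounded from $\ell^2(\Z)$ to $H^{-1}(\R)$, by the same estimate and Cauchy--Schwarz;
\item the diagonal multiplier $D_\alpha$ is compact on $\ell^2(\Z)$ because $\alpha_j\to0$.
\end{itemize}
Your finite-rank truncation $T_N$, with tail bound $C^2\sum_{|j|>N}|\alpha_j|$, is shorter. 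It uses nothing beyond the uniform delta bound and $\alpha\in\ell^1(\Z)$, which is available here. The paper's factorization instead exploits the square-summability of point samples. It therefore gives compactness as soon as $\alpha_j\to0$, and boundedness for any bounded $\alpha$. That makes it the argument to use if one wanted to weaken the summability hypothesis.

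For symmetry, the paper appeals to the self-adjointness of $\tilde{\H}_0$ and $\tilde{\H}_\alpha$. You instead check directly that $\sum_j\alpha_j f(j)\overline{g}(j)$ is Hermitian for real $\alpha$. Both work, and yours is more self-contained.
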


\begin{proof}
    We start by proving the first statement. Let $f, g \in H^{1} (\R)$, we have
    \begin{align*}
        \left|\left\langle (\tilde{\H}_\alpha - \tilde{\H}_0) f, g \right\rangle_{H^{-1} (\R), H^1 (\R)} \right|
        \leq \sum_{j \in \Z} |\alpha_j| |f (j)| |g (j)|
        \leq \| \alpha_j \|_{\ell^1 (\Z)} \| f \|_{H^1 (\R)} \| g \|_{H^1 (\R)}
    \end{align*}
    by the fact that $\alpha \in \ell^1 (\Z)$ and the continuous inclusion $H^1 (\R) \hookrightarrow L^\infty (\R)$. Thus, the operator $\tilde{\H}_\alpha - \tilde{\H}_0 \in B (H^{1} (\R), H^{-1} (\R))$ is well-defined. Furthermore, it is symmetric as $\tilde{\H}_0$ and $\tilde{\H}_\alpha$ are self-adjoint. We now prove that it is compact. Let $S \in B (H^{1} (\R), \ell^{2} (\Z))$ be the sampling operator defined by
    \begin{equation*}
        S (f)  = (f (j))_{j \in \Z}.
    \end{equation*}
    It is well-defined as, for $f \in H^1 (\R)$ and $j \in \Z$, one gets
    \begin{equation} \label{eqSamp}
        |f (j)|^2
        \lesssim \| f \|_{H^1 (j, j+1)}^2.
    \end{equation}
    Let $D_\alpha \in B (\ell^2 (\Z), \ell^2 (\Z))$ be the diagonal operator defined by
    \begin{equation*}
        D_\alpha \left( (v_j) \right) = \left( \alpha_j v_j \right).
    \end{equation*}
    As $\alpha_j \to 0$ as $|j| \to \infty$, the operator $D$ is compact and so is $D \circ S$. Finally, let $A \in B (\ell^2 (\Z), H^{-1} (\R))$ be the operator defined by
    \begin{equation*}
        A (v)
        = \sum_{j \in \Z} v_j \delta(\cdot - j).
    \end{equation*}
    It is well-defined as, for $v \in \ell^2 (\Z)$ and $f \in H^1 (\R)$, one gets
    \begin{equation*}
        \left| \left\langle A (v), f \right\rangle_{H^{-1} (\R), H^1 (\R)} \right|
        \leq \| v \|_{\ell^2 (\Z)} \| f (j) \|_{\ell^2 (\Z)}
        \leq \| v \|_{\ell^2 (\Z)} \| f \|_{H^1 (\R)}^2,
    \end{equation*}
    where we used Cauchy-Schwartz estimate for the second inequality and \eqref{eqSamp} for the third one.
    We have $\tilde{\H}_\alpha - \tilde{\H}_0 = A \circ D_\alpha \circ S$, so $\tilde{\H}_\alpha - \tilde{\H}_0$ is the composition of a bounded operator with a compact operator, therefore it is compact.

    We now prove the second statement. Let $f, g \in H^{1} (\R)$, we have
    \begin{align*}
        \left| \left\langle (1 + x^2)^{(1 + \mu)/2} (\tilde{\H}_\alpha - \tilde{\H}_0) f, g \right\rangle_{H^{-1} (\R), H^1 (\R)} \right|
        & \leq \sum_{j \in \Z} \left( 1 + |j|^2 \right)^\frac{1 + \mu}{2} |\alpha_j| |f (j)| |g (j)| \\
        & \lesssim \left( \sum_{j \in \Z} \left( 1 + |j|^2 \right)^\frac{1 + \mu}{2} |\alpha_j| \right) \| f \|_{H^1 (\R)} \| g \|_{H^1 (\R)},
    \end{align*}
    by the continuous inclusion $H^1 (\R) \hookrightarrow L^\infty (\R)$. Thus, the operator $(1 + x^2)^{(1 + \mu)/2} (\tilde{\H}_\alpha - \tilde{\H}_0) \in B (H^{1} (\R), H^{-1} (\R))$ is well-defined. This concludes the proof.
\end{proof}

We then establish that $\tilde{\H}_\alpha - \tilde{\H}_0$ belongs to the space $B (H^{1, -s} (\R), H^{-1, s} (\R))$ for a suitable parameter $s$ and deduce the associated resolvent identity.

\begin{lemma} \label{lemResExtId}
    Assume that there exists $\mu > 0$ such that $\alpha \in \ell^{1, 1 + \mu} (\Z)$. Let $s \in (0, (1 + \mu)/2)$. Then, the operator $\tilde{\H}_\alpha - \tilde{\H}_0 \in B (H^{1, -s} (\R), H^{-1, s} (\R))$ is well-defined.
\end{lemma}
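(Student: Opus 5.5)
The plan is to reduce the claim to a weighted pointwise bound on the pairing, with the weight $(1+x^2)^{s}$ split evenly between the two test functions. By definition of the duality between $H^{-1,s}(\R)$ and $H^{1,-s}(\R)$, it suffices to show that for $f, g \in H^{1,-s}(\R)$
\begin{equation*}
    \Big| \sum_{j \in \Z} \alpha_j f(j) \overline{g}(j) \Big| \lesssim \| f \|_{H^{1,-s}(\R)} \| g \|_{H^{1,-s}(\R)}.
\end{equation*}
Indeed, $H^{-1,s}(\R)$ is the dual of $H^{1,-s}(\R)$. So $(\tilde{\H}_\alpha - \tilde{\H}_0) f = \sum_j \alpha_j f(j)\delta(\cdot - j)$ defines an element of $H^{-1,s}(\R)$, with norm controlled by $\|f\|_{H^{1,-s}}$, once the bilinear form is bounded on $H^{1,-s}\times H^{1,-s}$. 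Here $f(j)$ makes sense because $f \in H^{1,-s}$ is locally $H^1$, hence continuous.

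The key pointwise estimate is a weighted Sobolev embedding. Set $F = (1+x^2)^{-s/2} f$. By the equivalence recalled in Section~\ref{sec2}, $\|F\|_{H^1(\R)} \simeq \|f\|_{H^{1,-s}(\R)}$, so $H^1(\R)\hookrightarrow L^\infty(\R)$ gives
\begin{equation*}
    |f(j)| = (1+j^2)^{s/2} |F(j)| \lesssim (1+j^2)^{s/2} \| f \|_{H^{1,-s}(\R)}.
\end{equation*}
The same bound holds for $g$. Equivalently, one can localize as in \eqref{eqSamp} on $(j,j+1)$ and compare the weight there with $(1+j^2)^{-s/2}$.

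Multiplying the two bounds yields
\begin{equation*}
    \Big| \sum_j \alpha_j f(j)\overline{g}(j) \Big| \lesssim \Big( \sum_j (1+j^2)^{s} |\alpha_j| \Big) \|f\|_{H^{1,-s}}\|g\|_{H^{1,-s}}.
\end{equation*}
Since $2s < 1+\mu$, we have $(1+j^2)^s \lesssim 1 + |j|^{1+\mu}$. The prefactor is therefore bounded by $\|\alpha\|_{\ell^1} + \|\alpha\|_{\ell^{1,1+\mu}}$, which is finite; note that $\alpha\in\ell^{1,1+\mu}$ implies $\alpha\in\ell^1$, because $|j|^{1+\mu}\geq 1$ for $j\neq 0$. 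In fact any $s \le (1+\mu)/2$ works.

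The only step needing care is the equivalence of norms $\|(1+x^2)^{-s/2}\cdot\|_{H^1}\simeq\|\cdot\|_{H^{1,-s}}$. It follows because $\partial_x (1+x^2)^{-s/2} = -s x(1+x^2)^{-s/2-1}$ is bounded by $(1+x^2)^{-s/2}$ times a constant, but this is already stated in the paper, so I would simply cite it.
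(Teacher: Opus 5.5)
Your proposal is correct and follows essentially the same route as the paper: split the weight $(1+j^2)^s$ evenly between $f$ and $g$, bound $(1+j^2)^{-s/2}|f(j)|$ by $\|f\|_{H^{1,-s}(\R)}$ using the norm equivalence and $H^1(\R)\hookrightarrow L^\infty(\R)$, then sum against $|\alpha_j|$ using $2s<1+\mu$. Your explicit treatment of the $j=0$ term, via the extra $\|\alpha\|_{\ell^1}$, is slightly more careful than the paper's bound by $\|\alpha\|_{\ell^{1,1+\mu}(\Z)}$ alone, since that weighted quantity does not see $\alpha_0$.
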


\begin{proof}
    Let $f, g \in H^{1, -s} (\R)$, we have
    \begin{multline*}
        \left| \left\langle (\tilde{\H}_\alpha - \tilde{\H}_0) f, g \right\rangle_{H^{-1, s} (\R), H^{1, -s} (\R)} \right|
        \leq \sum_{j \in \Z} \left( 1 + |j|^2 \right)^s |\alpha_j| \frac{|f (j)|}{\left( 1 + |j|^2 \right)^{\frac{s}{2}}} \frac{|g (j)|}{\left( 1 + |j|^2 \right)^{\frac{s}{2}}} \\
        \lesssim \left( \sum_{j \in \Z} \left( 1 + |j|^2 \right)^s |\alpha_j| \right) \| (1 + x^2)^{-\frac{s}{2}} f \|_{L^\infty (\R)} \| (1 + x^2 )^{-\frac{s}{2}} g \|_{L^\infty (\R)} \\
        \lesssim \left( \sum_{j \in \Z} \left( 1 + |j|^2 \right)^\frac{1 + \mu}{2} |\alpha_j| \right) \left\| f \right\|_{H^{1, -s} (\R)} \left\| g \right\|_{H^{1, -s} (\R)} \\
        \lesssim \| \alpha \|_{\ell^{1, 1 + \mu} (\Z)} \left\| f \right\|_{H^{1, -s} (\R)} \left\| g \right\|_{H^{1, -s} (\R)}
    \end{multline*}
    by the continuous inclusion $H^1 (\R) \hookrightarrow L^\infty (\R)$ and the fact that the norm $\| \cdot \|_{H^{1, -s} (\R)}$ and $\| (1 + x^2)^{-s/2} \cdot \|_{H^1 (\R)}$ are equivalent. The resolvent identity follows and this concludes the proof.
\end{proof}

\begin{remark}
    In particular, for any $z \in \C \setminus \sigma (\H_\alpha)$, the resolvent identity
    \begin{equation} \label{eqResExtId1}
        \tilde{\operatorname{R}}_\alpha (z)
        = \tilde{\operatorname{R}}_0 (z) - \tilde{\operatorname{R}}_0 (z) (\tilde{\H}_\alpha - \tilde{\H}_0) \tilde{\operatorname{R}}_\alpha (z)
    \end{equation}
    holds in the sense of $B (H^{-1, s} (\R), H^{1, -s} (\R))$.
\end{remark}

We can now establish the limiting absorption principle for $\tilde{\H}_0$ and $\tilde{\H}_\alpha$ and the associated resolvent identity, as stated in the following proposition. The proof is based on results from \cite{Ma18}.

\begin{proposition} \label{prpResExt}
    Assume that there exists $\mu > 0$ such that $\alpha \in \ell^{1, 1 + \mu} (\Z)$. Let $\lambda \geq 0$ and $s > 1/2$. The following statements hold.
    \begin{enumerate}
        \item The operators $\tilde{\operatorname{R}}_0 (\lambda^2 \pm i 0) \in B (H^{-1, s} (\R), H^{1, -s} (\R))$ given by
        \begin{equation} \label{eqResExt2}
            \tilde{\operatorname{R}}_0 (\lambda^2 \pm i 0)
            = \lim_{\varepsilon \to 0^+} \tilde{\operatorname{R}}_0 (\lambda^2 \pm i \varepsilon) \text{ in } B (H^{-1, s} (\R), H^{1, -s} (\R))
        \end{equation}
        are well-defined.
        \item The operators $\tilde{\operatorname{R}}_\alpha (\lambda^2 \pm i 0) \in B (H^{-1, s} (\R), H^{1, -s} (\R))$ given by
        \begin{equation} \label{eqResExt3}
            \tilde{\operatorname{R}}_\alpha (\lambda^2 \pm i 0)
            = \lim_{\varepsilon \to 0^+} \tilde{\operatorname{R}}_\alpha (\lambda^2 \pm i \varepsilon) \text{ in } B (H^{-1, s} (\R), H^{1, -s} (\R))
        \end{equation}
        are well-defined. 
    \end{enumerate}
    Furthermore, for all $s \in (1/2, (1 + \mu)/2)$, the resolvent identities
    \begin{equation} \label{eqResExt1}
        \tilde{\operatorname{R}}_\alpha (\lambda^2 \pm i 0)
        = \tilde{\operatorname{R}}_0 (\lambda^2 \pm i 0) - \tilde{\operatorname{R}}_0 (\lambda^2 \pm i 0) (\tilde{\H}_\alpha - \tilde{\H}_0) \tilde{\operatorname{R}}_0 (\lambda^2 \pm i 0)
    \end{equation}
    holds in $B (H^{-1, s} (\R), H^{1, -s} (\R))$.
\end{proposition}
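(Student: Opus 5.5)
The plan is to obtain both limiting absorption statements from the abstract perturbative framework of \cite{Ma18}, using $\tilde{\H}_0$ as the unperturbed operator and $V := \tilde{\H}_\alpha - \tilde{\H}_0$ as the perturbation. The resolvent identity \eqref{eqResExt1} is then obtained by letting $\varepsilon \to 0^+$ in the identity \eqref{eqResExtId1} at $z = \lambda^2 \pm i\varepsilon$. Throughout, $\lambda \geq 0$ is treated uniformly, including the threshold $\lambda = 0$, since this is the range in which the statement is formulated and \cite{Ma18} is invoked.

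\textbf{Step 1 (free operator).} Statement (1) will come from the explicit kernel. For $z = \lambda^2 \pm i\varepsilon$, \eqref{eqKerDis} and translation invariance give
\begin{equation*}
\tilde{\operatorname{R}}_0 (z) g (x) = \left\langle g, \overline{\tfrac{i}{2\sqrt z} e^{i\sqrt z |x - \cdot|}} \right\rangle .
\end{equation*}
For $g \in H^{-1,s}(\R)$ with $s > 1/2$, the map $y \mapsto e^{i\sqrt z|x-y|}$ is bounded in $H^{1,-s}$ uniformly in $x$ and $\varepsilon$, and it converges as $\varepsilon \to 0^+$. Together with the resolvent identity \eqref{eqResExtFL}, this gives convergence in $B(H^{-1,s}, H^{1,-s})$, which is exactly the input \cite{Ma18} requires for $\tilde{\H}_0$. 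At $\lambda = 0$ I would invoke the threshold part of \cite{Ma18} for the one-dimensional Laplacian on these weighted spaces rather than the naive kernel formula.

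\textbf{Step 2 (perturbed operator).} I will check the hypotheses of \cite{Ma18} for $V$. Lemma \ref{lemComp} gives that $V$ is symmetric and compact from $H^1$ to $H^{-1}$. Lemma \ref{lemComp}(2) gives the decay assumption $(1+x^2)^{(1+\mu)/2} V \in B(H^1, H^{-1})$. Lemma \ref{lemResExtId} gives $V \in B(H^{1,-s}, H^{-1,s})$ for $s < (1+\mu)/2$, and the bound in its proof shows the factorization $V = A D_\alpha S$ is compact between these weighted spaces, since $\alpha_j (1+j^2)^{s} \to 0$. Consequently $V\tilde{\operatorname{R}}_0(\lambda^2\pm i0)$ is compact on $H^{-1,s}$. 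The abstract result of \cite{Ma18} then yields that $1 + V \tilde{\operatorname{R}}_0(\lambda^2 \pm i\varepsilon)$ is invertible uniformly as $\varepsilon \to 0^+$. This provides the limit \eqref{eqResExt3}, first for $s \in (1/2,(1+\mu)/2)$ and then for all $s > 1/2$ by the inclusions \eqref{eqContInc}. The key point is that a kernel element of $1 + V\tilde{\operatorname{R}}_0(\lambda^2\pm i0)$ would produce an $H^{1,-s}$ solution of $\tilde{\H}_\alpha u = \lambda^2 u$ with outgoing (or incoming) behaviour. For $\lambda > 0$ such a solution is excluded by a Rellich-type argument: taking imaginary parts of $\langle V u, u\rangle$ shows $u$ has vanishing far-field coefficients and hence $u \equiv 0$.

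\textbf{Step 3 (identity).} For $s \in (1/2,(1+\mu)/2)$, take \eqref{eqResExtId1} at $z = \lambda^2 \pm i\varepsilon$. In the composition, each factor converges in the appropriate operator space: the outer resolvent in $B(H^{-1,s}, H^{1,-s})$ by Step 1, $V$ being fixed in $B(H^{1,-s}, H^{-1,s})$ by Lemma \ref{lemResExtId}, and the inner resolvent by the limits of Steps 1 and 2. Products of norm-convergent sequences converge in norm, so the identity passes to the limit and gives \eqref{eqResExt1} in $B(H^{-1,s}, H^{1,-s})$.

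\textbf{Main obstacle.} I expect the hard step to be the invertibility in Step 2 at the threshold $\lambda = 0$. There the Rellich argument gives nothing, and the free kernel carries a factor $1/\lambda$. For this case I would rely directly on the threshold analysis of \cite{Ma18}, checking that the weight $(1+\mu)/2 > 1/2$ supplied by Lemma \ref{lemComp}(2) is the decay its hypotheses demand.
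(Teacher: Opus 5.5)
Your plan fails at the threshold $\lambda = 0$, and earlier than you expect. The problem is not the invertibility of $1 + V\tilde{\operatorname{R}}_0$ in Step 2 but Step 1 itself: statement (1) is false at $\lambda = 0$ for every $s$, so there is no ``threshold part'' of \cite{Ma18}, or of any other reference, to invoke. Take $\phi \in C_c^\infty(\R)$ with $\int_\R \phi \neq 0$. By \eqref{eqKerDis},
\[
\bigl\langle \tilde{\operatorname{R}}_0(\pm i\eps)\phi, \phi \bigr\rangle = \frac{i}{2\sqrt{\pm i\eps}} \Bigl( \Bigl| \int_\R \phi \Bigr|^2 + O\bigl(\sqrt{\eps}\bigr) \Bigr).
\]
The left-hand side is bounded by $\|\tilde{\operatorname{R}}_0(\pm i\eps)\|_{B(H^{-1,s}(\R), H^{1,-s}(\R))}\|\phi\|_{H^{-1,s}(\R)}^2$, so these norms blow up like $\eps^{-1/2}$. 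The factor $1/\lambda$ in the free kernel reflects the zero-energy resonance of the one-dimensional Laplacian (the constant function); it is not an artifact of the explicit formula. Consequently the factorization $\tilde{\operatorname{R}}_\alpha = \tilde{\operatorname{R}}_0(1 + V\tilde{\operatorname{R}}_0)^{-1}$ behind your Step 2 has no meaning at $\lambda = 0$.

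Statement (2) also fails there in general. If $\operatorname{W}(0) = 0$, the denominator in \eqref{eqResKer1} vanishes at the threshold. If $\operatorname{W}(0) \neq 0$, the zero-energy Green function $\operatorname{f}_-(0,\min(x,y))\operatorname{f}_+(0,\max(x,y))/\operatorname{W}(0)$ grows like $\min(|x|,|y|)$ when $x,y$ have the same sign. It is therefore not bounded from $H^{-1,s}(\R)$ to $H^{1,-s}(\R)$ for $s<1$: paired with a bump on $[-2N,-N]$ it is of order $N^3$, while the squared $L^{2,s}$ norm is of order $N^{2s+1}$. Since the claim is false at $\lambda = 0$, the paper's one-line proof (citing \cite{Ma18} for $\tilde{\H}_0$ and $\tilde{\H}_\alpha$, then letting $\eps \to 0^+$ in \eqref{eqResExtId1}) cannot cover it either. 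The correct fix is to restrict the proposition to $\lambda > 0$, which is all that Theorem \ref{thResKer} and Lemma \ref{lemBorn} use.

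For $\lambda > 0$ your argument is correct and follows a genuinely different route from the paper. The paper does no analysis of the resolvent: it checks through Lemma \ref{lemComp} that $V$ fits the abstract framework of \cite{Ma18} and imports the limiting absorption principle for both operators. You instead run the classical Agmon-type perturbation argument:
\begin{itemize}
\item the free limit from the explicit kernel;
\item compactness of $V$ in $B(H^{1,-s}(\R), H^{-1,s}(\R))$ for $s<(1+\mu)/2$ (your weighted factorization is right, since $(1+j^2)^s\alpha_j \to 0$);
\item the Fredholm alternative for $1 + V\tilde{\operatorname{R}}_0(\lambda^2\pm i0)$;
\item a Rellich-type uniqueness argument.
\end{itemize}
Injectivity plus norm continuity in $\eps$ already gives uniformly bounded inverses, so \cite{Ma18} is not needed at all. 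This route is self-contained and yields $\tilde{\operatorname{R}}_\alpha(\lambda^2\pm i0) = \tilde{\operatorname{R}}_0(\lambda^2\pm i0)\bigl(1 + V\tilde{\operatorname{R}}_0(\lambda^2\pm i0)\bigr)^{-1}$ directly. Two steps need tightening.

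First, uniform boundedness and pointwise-in-$x$ convergence of $y \mapsto e^{i\sqrt{z}|x-y|}$ in $H^{1,-s}$ do not give operator-norm convergence, because the convergence is not uniform in $x$. Use the weights in both variables: a Hilbert--Schmidt estimate in $B(L^{2,s}(\R), L^{2,-s}(\R))$ follows by dominated convergence for $s>1/2$. Then upgrade to $B(H^{-1,s}(\R), H^{1,-s}(\R))$ by using \eqref{eqResExtFL} with $w=-1$ twice, which sandwiches $\tilde{\operatorname{R}}_0(z)$ between two copies of $\tilde{\operatorname{R}}_0(-1)$.

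Second, $V$ is not compactly supported, so ``vanishing far-field coefficients, hence $u\equiv 0$'' needs the ODE structure. The identity $\operatorname{Im}\langle Vu,u\rangle = 0$ gives $\sum_j \alpha_j u(j)e^{\mp i\lambda j} = 0$, hence $u(x)\to 0$ as $x\to+\infty$. A nonzero combination of $\operatorname{f}_+(\lambda,\cdot)$ and $\operatorname{f}_+(-\lambda,\cdot)$ cannot decay (Proposition \ref{prpJostSol}), so $u \equiv 0$.

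Finally, letting $\eps\to 0^+$ in \eqref{eqResExtId1} gives the identity with $\tilde{\operatorname{R}}_\alpha(\lambda^2\pm i0)$ as the last factor. The display \eqref{eqResExt1}, with $\tilde{\operatorname{R}}_0(\lambda^2\pm i0)$ there, is a misprint, as its use in the proof of Theorem \ref{thLAP} confirms.
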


\begin{proof}
    The operator $\tilde{\H}_0$ satisfies the hypothesis of \cite[Corollary $5.9$]{Ma18} (take $V = 0$, where $V$ denotes the potential in \cite{Ma18}) and, by Lemma \ref{lemComp}, the operator $\H_{\alpha}$ also satisfies the hypothesis of \cite[Corollary $5.9$]{Ma18} (take $V = \tilde{\H}_\alpha - \tilde{\H}_0$). Thus, we can apply \cite[Theorem $1.2$]{Ma18} and \cite[Corollary $3.1$]{Ma18} and we obtain \eqref{eqResExt2} and \eqref{eqResExt3}. Finally, we pass to the limit in \eqref{eqResExtId1} in order to obtain \eqref{eqResExt1}. This concludes the proof.
\end{proof}

We can finally prove Theorem \ref{thLAP}.

\begin{proof}[Proof of Theorem \ref{thLAP}]
    Let $f \in  L^{2, s} (\R)$. Then $\tilde{\operatorname{R}}_\alpha (\lambda^2 \pm i 0) f \in H^{1, -s} (\R) \subset L^{2, -s} (\R)$ are well-defined by Proposition \ref{prpResExt}. We define $\operatorname{R}_\alpha (\lambda^2 \pm i 0) f = \tilde{\operatorname{R}}_\alpha (\lambda^2 \pm i 0) f$, so that
    \begin{multline*}
        \| \operatorname{R}_\alpha (\lambda^2 \pm i \eps) f - \operatorname{R}_\alpha (\lambda^2 \pm i 0) f \|_{L^{2, -s} (\R)} \\
        \leq \| \tilde{\operatorname{R}}_\alpha (\lambda^2 \pm i \eps) f - \tilde{\operatorname{R}}_\alpha (\lambda^2 \pm i 0) f \|_{H^{1, -s} (\R)} \\
        \leq \| \tilde{\operatorname{R}}_\alpha (\lambda^2 \pm i \eps) - \tilde{\operatorname{R}}_\alpha (\lambda^2 \pm i 0) \|_{B (H^{-1, s} (\R), H^{1, -s} (\R))} \| f \|_{H^{-1, s} (\R)} \\
        \leq \| \tilde{\operatorname{R}}_\alpha (\lambda^2 \pm i \eps) - \tilde{\operatorname{R}}_\alpha (\lambda^2 \pm i 0) \|_{B (H^{-1, s} (\R), H^{1, -s} (\R))} \| f \|_{L^{2, s} (\R)}
        \to 0
    \end{multline*}
    as $\eps \to 0$. Therefore, the limit result \eqref{eqLAP1} holds.
    
    We now establish the asymptotic behavior of $\operatorname{R}_\alpha (\lambda^2 + i 0) f$. By the resolvent identity \eqref{eqResExt1}, we have, for $x \in \R$,
    \begin{align*}
        \operatorname{R}_\alpha (\lambda^2 + i 0) f (x)
        & = \tilde{\operatorname{R}}_\alpha (\lambda^2 + i 0) f (x) \\
        & = \tilde{\operatorname{R}}_0 (\lambda^2 + i 0) f (x) - \tilde{\operatorname{R}}_0 (\lambda^2 + i 0) (\tilde{\H}_\alpha - \tilde{\H}_0) \tilde{\operatorname{R}}_\alpha (\lambda^2 + i 0) f (x) \\
        & = \int_{-\infty}^\infty \frac{i e^{i \lambda |x - y|}}{2 \lambda} f (y) \, dy + \sum_{j \in \Z} \frac{i \alpha_j}{2 \lambda} \operatorname{R}_\alpha (\lambda^2 + i 0) f (j) e^{i \lambda |x - j|},
    \end{align*}
    where we used \eqref{eqKerFun} coupled with \eqref{eqKerDis}. Applying the operator $(\partial_x - i \lambda)$ to the expression above yields
    \begin{equation*}
        (\partial_x - i \lambda) \operatorname{R}_\alpha (\lambda^2 + i 0) f (x)
        = \sum_{j > x} \alpha_j \operatorname{R}_\alpha (\lambda^2 + i 0) f (j) e^{i \lambda (j - x)},
    \end{equation*}
    leading to the asymptotic behavior stated in \eqref{eqOutG1}. The remaining cases are handled in the same manner. This concludes the proof.
\end{proof}

\section{Jost solutions} \label{sec4}


Let $\lambda \in \C^+ \cup \R$. We define the \textit{Jost solutions} $\operatorname{f}_+ (\lambda, \cdot)$ and $\operatorname{f}_- (\lambda, \cdot)$ associated to $\H_{\alpha}$ as the unique solutions of
\begin{equation} \label{eqJost1}
    -\partial_{xx} \operatorname{f}_\pm (\lambda, x)
    = \lambda^2 \operatorname{f}_\pm (\lambda, x), \quad x \in \R \setminus \Z
\end{equation}
which satisfy the continuity condition
\begin{equation} \label{eqJost2}
    \operatorname{f}_\pm (\lambda, j-)
    = \operatorname{f}_\pm (\lambda, j+), \quad j \in \Z,
\end{equation}
and the jump derivative condition
\begin{equation} \label{eqJost3}
    \partial_x \operatorname{f}_\pm (\lambda, x) \big|_{x = j+} - \partial_x \operatorname{f}_\pm (\lambda, x) \big|_{x = j-} = \alpha_j \operatorname{f}_\pm (\lambda, j), \quad j \in \Z,
\end{equation}
together with the asymptotic behavior
\begin{align} 
    & \operatorname{f}_+ (\lambda, x) \sim e^{i \lambda x} \text{ as } x \to \infty, \label{eqJost4} \\
    & \operatorname{f}_- (\lambda, x) \sim e^{-i \lambda x} \text{ as } x \to -\infty. \label{eqJost5}
\end{align}

In this section, we first prove the existence of Jost solutions (see Proposition \ref{prpJostSol}) and state some useful properties. In particular, we prove a property on the Wronskian on the Jost solutions which will allow us to get rid of the non-resonance assumption under stronger summability of $\alpha$ (see Proposition \ref{prpNonZero}). Finally, we derive a representation of the kernel of $\operatorname{R} (\lambda^2 \pm i0)$ based on Jost solutions (see Theorem \ref{thResKer}).

\subsection{Existence and properties}

We first prove the existence of the Jost solutions. To do so, we establish the following technical lemma.

\begin{lemma} \label{lemMatrix}
    Assume that $\alpha \in \ell^{1, 1} (\Z)$. Let $\lambda \in \C^+ \cup \R$ and $j \in \Z$. Let $N_j (\lambda) \in \operatorname{M}_2 (\C)$ be the degree $1$-nilpotent matrix given by
    \begin{equation*}
        N_j (\lambda) 
        =
        \begin{cases}
            \dfrac{\operatorname{sign} (j)}{2 i \lambda}
            \begin{pmatrix}
                1 & e^{-2 i \lambda j} \\
                -e^{2 i \lambda j} & -1
            \end{pmatrix} & \text{ if } \lambda \neq 0, \\
            \operatorname{sign} (j)
            \begin{pmatrix}
                -j & -j^2 \\
                1 & j
            \end{pmatrix} & \text{ if } \lambda = 0.
        \end{cases}        
    \end{equation*}
    Let $M_j (\lambda)$ and $P_j (\lambda) \in \operatorname{GL}_2 (\C)$ be given by
    \begin{equation} \label{eqMatrix1}
        M_j (\lambda)
        = I_2 + \alpha_j N_j (\lambda), \quad
        P_j (\lambda)
        =
        \begin{cases}
            M_j (\lambda) M_{j-1} (\lambda) \dots M_2 (\lambda) M_1 (\lambda) & \text{ if } j \geq 1, \\
            M_j (\lambda) M_{j+1} (\lambda) \dots M_{-2} (\lambda) M_{-1} (\lambda) & \text{ if } j \leq -1.
        \end{cases}
    \end{equation}
    Then, the following assertions hold.
    \begin{enumerate}
        \item If $\lambda \in \R^*$, there exists $P_{\pm \infty} (\lambda) \in \operatorname{GL}_2 (\C)$ such that
        \begin{align*}
            & P_j (\lambda) \to P_\infty(\lambda) \text{ as } j \to \infty, \\
            & P_j (\lambda) \to P_{-\infty} (\lambda) \text{ as } j \to - \infty.
        \end{align*}
        Furthermore, the functions $\lambda \mapsto P_\infty (\lambda)^{-1}$ and $\lambda \mapsto P_{-\infty} (\lambda)^{-1}$ are $C^1$ on $\R^*$.
        \item If $\lambda \in \C^+ \cup \{ 0 \}$, there exists vectors $P_\infty (\lambda) (1, 0)^T, P_{-\infty} (\lambda) (0, 1)^T \in \C^2$ such that
        \begin{align*}
            & P_j (\lambda)^{-1} (1, 0)^T \to P_\infty^{-1} (\lambda) (1, 0)^T \text{ as } j \to \infty, \\
            & P_j (\lambda)^{-1} (0, 1)^T \to P_{-\infty}^{-1} (\lambda) (0, 1)^T \text{ as } j \to - \infty.
        \end{align*}
        Furthermore, the functions $\lambda \mapsto P_\infty (\lambda)^{-1} (1, 0)^T$ and $\lambda \mapsto P_{-\infty} (\lambda)^{-1} (0, 1)^T$ are holomorphic on the upper half plane $\C^+$.
    \end{enumerate}
\end{lemma}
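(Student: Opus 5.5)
The plan is to treat $P_j(\lambda)$ as an infinite product $\prod (I_2+\alpha_j N_j(\lambda))$ and use the standard criterion: if $\sum_j \|\alpha_j N_j(\lambda)\| < \infty$, then the ordered partial products converge. The limit is invertible as soon as each factor is invertible. We will apply the same reasoning to the inverses.

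Since $N_j$ is nilpotent of degree one, $\det M_j = 1 + \alpha_j \operatorname{tr} N_j = 1$. Hence $M_j(\lambda)^{-1} = I_2 - \alpha_j N_j(\lambda)$, which gives an explicit expression for the inverse products
\[
P_j(\lambda)^{-1} = M_1^{-1}\cdots M_j^{-1} \quad (j\ge 1),
\]
and similarly for $j\le -1$.

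\textbf{Case $\lambda\in\R^*$.} Here $\|N_j(\lambda)\|\lesssim 1/|\lambda|$ uniformly in $j$, because the exponentials have modulus one. So $\sum_j |\alpha_j|\,\|N_j\| \le \|\alpha\|_{\ell^1}/|\lambda| < \infty$. The usual telescoping estimate
\[
\|P_{k}-P_j\|\le \exp\Big(\sum_{i\le k}|\alpha_i|\|N_i\|\Big)\sum_{i=j+1}^k |\alpha_i|\|N_i\|
\]
shows that $(P_j)$ is Cauchy. The limit $P_{\pm\infty}$ has determinant $1$, so it lies in $\operatorname{GL}_2(\C)$, and the same argument gives $P_j^{-1}\to P_{\pm\infty}^{-1}$. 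For the $C^1$ regularity on $\R^*$, differentiate each factor: $\partial_\lambda N_j(\lambda)$ contains terms of size $|j|/|\lambda|$ (from $e^{\pm 2i\lambda j}$) and $1/\lambda^2$. Therefore $\sum_j |\alpha_j|\,\sup_{K}\|\partial_\lambda N_j\| \lesssim_K \|\alpha\|_{\ell^{1,1}}$ on compact sets $K\subset\R^*$. The Leibniz rule, applied to the finite products, together with the uniform bound on the products, then gives locally uniform convergence of $\partial_\lambda P_j^{-1}$. Hence the limit is $C^1$. This is exactly where $\alpha\in\ell^{1,1}$ is used.

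\textbf{Case $\lambda\in\C^+\cup\{0\}$.} This is the main obstacle: the exponentials grow, since $|e^{-2i\lambda j}| = e^{2\operatorname{Im}\lambda\, j}$ is large for $j\to+\infty$. The full matrices therefore need not converge, and we must track only the relevant vector. I would conjugate by $D_j=\operatorname{diag}(e^{-i\lambda j},e^{i\lambda j})$, that is, pass to the coordinates in which the problem is the discretized Volterra equation for $e^{-i\lambda x}\operatorname{f}_+$. I would rewrite $P_j^{-1}(1,0)^T$ recursively as $v_j=(I_2-\alpha_j N_j)v_{j-1}$ and estimate the components separately. The first component stays bounded, while the second picks up factors $e^{2i\lambda j}$, which are bounded by $1$ for $\operatorname{Im}\lambda\ge0$ when $j>0$. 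The entry $e^{-2i\lambda j}$ multiplies the second component, and I would show that the combination
\[
e^{-2i\lambda j}\times(\text{second component})
\]
is a sum of terms $e^{2i\lambda(i-j)}$ with $i\le j$, hence bounded. This is the Deift--Trubowitz Volterra bound. It gives $|v_j - v_{j-1}| \lesssim |\alpha_j|\min(1/|\lambda|, 1+|j|)\,C$, uniformly in $j$, using the elementary bound $|e^{2i\lambda m}-1|/|2\lambda|\le m$ for $\operatorname{Im}\lambda\ge 0$. This bound also covers $\lambda=0$, where $N_j$ grows like $j^2$ but the vector iteration only grows like $|j|$; that is why $\alpha\in\ell^{1,1}$ suffices.

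Summability then yields convergence, locally uniformly on $\C^+$. Each $v_j$ is a polynomial in $1/\lambda$ and $e^{\pm 2i\lambda i}$, hence holomorphic on $\C^+$. By Weierstrass, the locally uniform limit $P_\infty^{-1}(\lambda)(1,0)^T$ is holomorphic on $\C^+$. The case $j\to-\infty$ with $(0,1)^T$ follows symmetrically via $j\mapsto -j$.
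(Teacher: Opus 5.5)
Your treatment of $\lambda\in\R^*$ is sound. Your $C^1$ argument, which uses $\alpha\in\ell^{1,1}$ to control $\partial_\lambda N_j$ (entries of size $|j|/|\lambda|+\lambda^{-2}$), is more careful than the paper's one-line justification. The gap is in the delicate case $\lambda\in\C^+$. Since $P_j^{-1}=M_1^{-1}\cdots M_j^{-1}$, the new factor $M_j^{-1}$ acts \emph{first}, on the fixed vector $(1,0)^T$. Hence
\[
P_j^{-1}(1,0)^T-P_{j-1}^{-1}(1,0)^T=-\alpha_j\,P_{j-1}^{-1}N_j(1,0)^T,
\]
and this is not $-\alpha_jN_jv_{j-1}$. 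Your recursion $v_j=(I_2-\alpha_jN_j)v_{j-1}$ computes $M_j^{-1}\cdots M_1^{-1}(1,0)^T$, the product in the opposite order. In $\C^+$ the order is exactly what decides boundedness. Your key claim shows the symptom: for an index $l\le j$ one has $|e^{2i\lambda(l-j)}|=e^{2\operatorname{Im}(\lambda)(j-l)}\ge 1$, so these terms grow rather than stay bounded. Concretely, if only $\alpha_1$ and $\alpha_J$ are nonzero, your recursion gives
\[
v_J-v_{J-1}=-\frac{\alpha_J}{2i\lambda}\Bigl(1-\frac{\alpha_1}{2i\lambda}\bigl(1-e^{-2i\lambda(J-1)}\bigr)\Bigr)(1,-e^{2i\lambda J})^T,
\]
of size about $|\alpha_1\alpha_J|e^{2\operatorname{Im}(\lambda)(J-1)}/(4|\lambda|^2)$. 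So no bound $|v_j-v_{j-1}|\lesssim C|\alpha_j|$, with $C$ depending only on $\lambda$ and $\|\alpha\|_{\ell^{1,1}(\Z)}$, holds for your sequence. By contrast, the true $P_J^{-1}(1,0)^T$ only involves $N_1N_J(1,0)^T$, which carries the bounded factor $1-e^{2i\lambda(J-1)}$.

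What you need instead is a bound on $P_{j-1}^{-1}$ applied to the specific vector $N_j(1,0)^T=\frac{1}{2i\lambda}(1,-e^{2i\lambda j})^T$. The paper expands $P_{j-1}^{-1}=(I_2-\alpha_1N_1)\cdots(I_2-\alpha_{j-1}N_{j-1})$ into products $N_{j_1}\cdots N_{j_n}N_j$ with $j_1<\cdots<j_n<j$. It then uses the rank-one form $N_l=\frac{1}{2i\lambda}(1,-e^{2i\lambda l})^T(1,e^{-2i\lambda l})$. The resulting scalar factors are $1-e^{2i\lambda(j_{k+1}-j_k)}$ and $1-e^{2i\lambda(j-j_n)}$, all with \emph{positive} increments, hence of modulus at most $2$. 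This gives $\|(P_j^{-1}-P_{j-1}^{-1})(1,0)^T\|\lesssim|\alpha_j|\,|\lambda|^{-1}e^{\|\alpha\|_{\ell^1(\Z)}/|\lambda|}$, which is summable and uniform on compact subsets of $\C^+$; your Weierstrass step then applies. In your Volterra language, the iteration must run \emph{backward} from the truncation point $j$ down to $0$, as in the equation for $\operatorname{m}_+$, whose sum is over sites $>x$ with kernel $e^{2i\lambda(j-x)}$. The truncations at $j$ and $j-1$ must then be compared, which is exactly the increment identity above. At $\lambda=0$ the same expansion in the $(1,x)$ basis gives the factor $j\prod_k j_k$, which is where $\ell^{1,1}$ enters. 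This case must be done in that basis, not as a limit of a bound uniform in small $\lambda\neq0$: in the exponential basis, $N_1(1,0)^T$ already has size $\gtrsim 1/|\lambda|$.
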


In what follows, we shall write $N_j (\lambda)$ (respectively $M_j (\lambda)$ and $P_j (\lambda)$) simply as $N_j$ (respectively $M_j$ and $P_j$) for $j \in \Z \cup \{ \pm \infty \}$ whenever the dependence on $\lambda$ is clear.

\begin{proof}[Proof of Lemma \ref{lemMatrix}]
    Let $j \geq 2$ and $\lambda \in \C^+ \cup \R$. Observe that
    \begin{equation*}
        M_j^{-1}
        = I_2 - \alpha_j N_j,
        \quad
        P_j^{-1}
        = M_1^{-1} M_2^{-1} \dots M_{j-1}^{-1} M_j^{-1}.
    \end{equation*}
    Thus,
    \begin{align*}
        P_j^{-1}
        = P_{j-1}^{-1} M_j^{-1}
        = P_{j-1}^{-1} (I_2 - \alpha_j N_j)
    \end{align*}
    so
    \begin{align*}
        P_j^{-1} - P_{j-1}^{-1}
        & = -\alpha_j P_{j-1}^{-1} N_j \\
        & = -\alpha_j (I_2 - \alpha_1 N_1) \cdots (I_2 - \alpha_{j-1} N_{j-1}) N_j \\
        & = -\alpha_j  \left( \sum_{1 \leq n \leq j-1} (-1)^n \sum_{1 \leq j_1 < \ldots < j_n \leq j-1} \alpha_{j_1} \cdots \alpha_{j_n} N_{j_1} \cdots N_{j_n} \right) N_j.
    \end{align*}
    
    We divide the rest of the proof in two steps: the first one shows Assertion $(1)$ while the second proves Assertion $(2)$. The matrix norm used is the spectral norm.
    
    \medskip
    
    \noindent \emph{Proof of $(1)$.} Assume that $\lambda \in \R^*$. By a direct calculation, we have
    \begin{equation*}
        \| N_j (\lambda) \|
        = \frac{1}{|\lambda|}.
    \end{equation*}
    We obtain
    \begin{align*}
        \left\| P_j (\lambda)^{-1} - P_{j-1} (\lambda)^{-1} \right\|
        & \leq \left| \frac{\alpha_j}{\lambda} \right| \sum_{1 \leq n \leq j-1} \, \sum_{1 \leq j_1 < \ldots < j_n \leq j-1} \left| \frac{\alpha_{j_1}}{\lambda} \right| \cdots \left| \frac{\alpha_{j_n}}{\lambda} \right| \\
        & \leq \left| \frac{\alpha_j}{\lambda} \right| \sum_{1 \leq n \leq j-1} \frac{1}{n!} \left( \frac{\| \alpha \|_{\ell^1 (\Z)}}{|\lambda|} \right)^n \\
        & \leq \left| \frac{\alpha_j}{\lambda} \right| \exp \left( \frac{\| \alpha \|_{\ell^1 (\Z)}}{|\lambda|} \right)
    \end{align*}
    as $\alpha \in \ell^1 (\Z)$. Thus,
    \begin{equation}  \label{eqMatrix2}
        \sum_{j \geq 2} \left\| P_{j}^{-1} (\lambda) - P_{j-1}^{-1} (\lambda) \right\|
        \leq \exp \left( \frac{\| \alpha \|_{\ell^1 (\Z)}}{2|\lambda|} \right) \sum_{j \geq 2} \left| \frac{\alpha_j}{\lambda} \right|
        < \infty.
    \end{equation}
    
    By \eqref{eqMatrix2}
    , for any $\lambda \in \R^*$, we obtain that $(P_j (\lambda)^{-1})_{j \geq 1}$ is a Cauchy sequence which therefore converges. We may prove similarly that $P_{-\infty} (\lambda)^{-1}$ is well-defined. The $C^1$ nature follows from the fact that $\lambda \mapsto N_j (\lambda)$ is $C^1$ on the upper half plane. Thus, Assertion $(1)$ holds.

    \medskip
    
    \noindent \emph{Proof of $(2)$.}
        First, assume that $\lambda \in \C^+$. We have
    \begin{equation*}
        N_j (\lambda)
        = \frac{1}{2 i \lambda}
        \begin{pmatrix}
            1 \\
            -e^{i 2 \lambda j}
        \end{pmatrix}
        \begin{pmatrix}
            1 \\
            e^{-2 i \lambda j}
        \end{pmatrix}^T.
    \end{equation*}
    Thus, for $n \in \N$ and $1 \leq j_1 < \ldots < j_n \leq j-1$,
    \begin{equation*}
        N_{j_1} (\lambda) \cdots N_{j_n} (\lambda)
        = \frac{1}{(2 i \lambda)^n} \prod_{k = 1}^{n-1} \left( 1 - e^{2 i \lambda (j_{k+1} - j_k)} \right)
        \begin{pmatrix}
            1 \\
            -e^{i 2 \lambda j_1}
        \end{pmatrix}
        \begin{pmatrix}
            1 \\
            e^{-2 i \lambda j_n}
        \end{pmatrix}^T.
    \end{equation*}
    so that
    \begin{equation*}
        N_{j_1} (\lambda) \cdots N_{j_n} (\lambda) N_{j} (\lambda) (1, 0)^T
        = \frac{1}{(2 i \lambda)^{n+1}} \prod_{k = 1}^{n-1} \left( 1 - e^{2 i \lambda (j_{k+1} - j_k)} \right) \left( 1 - e^{2 i \lambda (j - j_n)} \right)
        \begin{pmatrix}
            1 \\
            -e^{2 i \lambda j_1}
        \end{pmatrix}
    \end{equation*}
    and
    \begin{equation*}
        \left\| N_{j_1} (\lambda) \cdots N_{j_n} (\lambda) N_{j} (\lambda) (1, 0)^T \right\|
        \leq \frac{2}{|\lambda|^{n+1}}
    \end{equation*}
    as $\operatorname{Im} (\lambda) > 0$, $j - j_n > 0$ and $j_{k+1} - j_k > 0$ for all $k = 1, \ldots, n-1$. Hence,
    \begin{multline*}
        \left\| \left( P_j (\lambda)^{-1} - P_{j-1} (\lambda)^{-1} \right) (1, 0)^T \right\| \\
        \leq \left| \alpha_j \right| \sum_{1 \leq n \leq j-1} \, \sum_{1 \leq j_1 < \ldots < j_n \leq j-1} \left| \alpha_{j_1} \cdots \alpha_{j_n} \right| \left\| N_{j_1} \cdots N_{j_n} N_j (1, 0)^T \right\| \\
        \leq \left| \frac{2 \alpha_j}{\lambda} \right| \sum_{1 \leq n \leq j-1} \, \sum_{1 \leq j_1 < \ldots < j_n \leq j-1} \left| \frac{\alpha_{j_1}}{\lambda} \right| \cdots \left| \frac{\alpha_{j_n}}{\lambda} \right|
        \leq \left| \frac{\alpha_j}{\lambda} \right| \exp \left( \frac{\| \alpha \|_{\ell^1 (\Z)}}{|\lambda|} \right)
    \end{multline*}
    Finally, we have
    \begin{equation*}
        \sum_{j \geq 2} \left\| \left( P_{j+1}^{-1} (\lambda) - P_{j}^{-1} (\lambda) \right) (1, 0)^T \right\|
        \leq \exp \left( \frac{\| \alpha \|_{\ell^1 (\Z)}}{|\lambda|} \right) \sum_{j \geq 2} \left| \frac{2 \alpha_j}{\lambda} \right|
        < \infty,
    \end{equation*}
    so that $(P_j (\lambda)^{-1} (1, 0)^T)_{j \geq 2}$ is a Cauchy sequence and therefore converges. Similarly, we prove that $P_{-\infty} (\lambda)^{-1} (0, 1)^T$ is well-defined. The holomorphic nature follows from the fact that $\lambda \mapsto N_j (\lambda)$ is holomorphic on the upper half plane.
    
    yThen, assume that $\lambda = 0$. By similar computation, we have
    \begin{equation*}
        \left\| N_{j_1} (\lambda) \cdots N_{j_n} (\lambda) N_{j} (\lambda) (1, 0)^T \right\|
        \lesssim |j_1| \left( \prod_{k = 1}^{n-1} (j_{k+1} - j_k) \right) (j - j_n)
        \leq j \prod_{k = 1}^n j_k
    \end{equation*}
    Hence,
    \begin{align*}
        \left\| \left( P_j (\lambda)^{-1} - P_{j-1} (\lambda)^{-1} \right) (1, 0)^T \right\|
        %
        %
        & \leq \left| j \alpha_j \right| \sum_{1 \leq n \leq j-1} \, \sum_{1 \leq j_1 < \ldots < j_n \leq j-1} \left| j_1 \alpha_{j_1} \right| \cdots \left| j_n \alpha_{j_n} \right| \\
        & \leq \left| j \alpha_j \right| \exp \left( \| \alpha \|_{\ell^{1, 1} (\Z)} \right).
    \end{align*}
    Finally, we have
    \begin{equation*}
        \sum_{j \geq 2} \left\| \left( P_{j+1}^{-1} (0) - P_{j}^{-1} (0) \right) (1, 0)^T \right\|
        \leq \| \alpha \|_{\ell^{1, 1} (\Z)} \exp \left( \| \alpha \|_{\ell^{1, 1} (\Z)} \right) \sum_{j \geq 2}
        < \infty,
    \end{equation*}
    so that $(P_j (\lambda)^{-1} (1, 0)^T)_{j \geq 2}$ is a Cauchy sequence and therefore converges. Similarly, we prove that $P_{-\infty} (\lambda)^{-1} (0, 1)^T$ is well-defined. Thus, Assertion $(2)$ is proved.
\end{proof}

\begin{remark}
    If $\lambda \in \C^+ \cup \{ 0 \}$, the matrices $P_{\pm \infty} (\lambda)^{-1}$ are not well-defined, unless $\alpha$ has exponential decay in the case $\lambda \in \C^+$ or $\alpha \in \ell^{1, 2} (\Z)$. We however denote the limits of $(P_j (\lambda)^{-1} (1, 0)^T)_{j \in \Z}, \, (P_j (\lambda)^{-1} (0, 1)^T)_{j \in \Z} \subset \C^2$ as $P_\infty^{-1} (\lambda) (1, 0)^T$ and $P_{-\infty}^{-1} (\lambda) (0, 1)^T$ for the seek of clarity.
\end{remark}

\begin{remark} \label{rkAss}
    Observe that, for $\lambda \neq 0$, the assumption $\alpha \in \ell^1(\Z)$ is sufficient to obtain the limit result. Only the case $\lambda = 0$ requires to assume $\alpha \in \ell^{1, 1}(\Z)$. The same observation holds for the definition of Jost solutions in Proposition \ref{prpJostSol} below.
\end{remark}

The next proposition states that the Jost solutions associated to $\H_{\alpha}$ are well-defined and holomorphic in $\lambda$ on $\C^+$.

\begin{proposition} \label{prpJostSol}
    Assume that $\alpha \in \ell^{1, 1} (\Z)$. Let $\lambda \in \C^+ \cup \R$. Then, the Jost solutions $\operatorname{f}_+ (\lambda, \cdot)$ and $\operatorname{f}_- (\lambda, \cdot)$ are well-defined. There exist coefficients $(A_j^\pm (\lambda), B_j^\pm (\lambda))_{j \in \Z} \subset \C^2$ such that the following assertions hold.
    \begin{enumerate}
        \item If $\lambda \in \R^* \cup \C^+$, then
        \begin{align*}
            & \left( A_j^+ (\lambda), B_j^+ (\lambda) e^{2 \operatorname{Im} (\lambda) j} \right) \to (1, 0) \text{ as } j \to + \infty, \\
            & \left( A_j^- (\lambda) e^{-2 \operatorname{Im} (\lambda) j}, B_j^- (\lambda) \right) \to (0, 1) \text{ as } j \to - \infty,
        \end{align*}
        and, for all $x \in \R$,
        \begin{equation} \label{eqJostSol14}
            \operatorname{f}_\pm (\lambda, x) = \sum_{j \in \Z} \left(A_j^\pm (\lambda) e^{i \lambda x} + B_j^\pm (\lambda) e^{-i \lambda x} \right) \mathbf{1}_{[j, j+1)} (x).
        \end{equation}
        Furthermore,
        \begin{enumerate}
            \item the function $\lambda \in \C^+ \mapsto (A_j^\pm (\lambda), B_j^\pm (\lambda))$ is holomorphic;
            \item the function $\lambda \in \R^* \mapsto (A_j^\pm (\lambda), B_j^\pm (\lambda))$ is $C^1$; and, for $\lambda \in \R^*$, $\operatorname{f}_\pm (\lambda, \cdot) \in L^\infty (\R)$.
        \end{enumerate}
        
        \item If $\lambda = 0$, then
        \begin{equation*}
            (A_j^\pm (0), B_j^\pm (0)) \to (1, 0) \text{ as } j \to \pm \infty,
        \end{equation*}
        and, for all $x \in \R$,
        \begin{equation} \label{eqJostSol15}
            \operatorname{f}_\pm (0, x) = \sum_{j \in \Z} \left( A_j^\pm (0) + B_j^\pm (0) x \right) \mathbf{1}_{[j, j+1)} (x).
        \end{equation}
        
        
    \end{enumerate}
\end{proposition}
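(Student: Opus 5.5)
We build $\operatorname{f}_+$ directly from its piecewise form and transfer the jump conditions across the integer nodes; $\operatorname{f}_-$ is handled symmetrically. For $\lambda\neq0$, on each cell $[j,j+1)$ we write $\operatorname{f}_+(\lambda,x)=A_je^{i\lambda x}+B_je^{-i\lambda x}$, which automatically solves \eqref{eqJost1} on $\R\setminus\Z$. Imposing \eqref{eqJost2} and \eqref{eqJost3} at the node $j$ gives a linear relation between $(A_{j-1},B_{j-1})$ and $(A_j,B_j)$. The continuity condition is
\[
A_{j-1}e^{i\lambda j}+B_{j-1}e^{-i\lambda j}=A_je^{i\lambda j}+B_je^{-i\lambda j},
\]
and the derivative jump is
\[
i\lambda\bigl[(A_j-A_{j-1})e^{i\lambda j}-(B_j-B_{j-1})e^{-i\lambda j}\bigr]=\alpha_j\bigl(A_je^{i\lambda j}+B_je^{-i\lambda j}\bigr).
\]
Solving these should give $(A_{j-1},B_{j-1})^T=(I_2+\alpha_j\tilde N_j)(A_j,B_j)^T$, where $\tilde N_j$ is a rank-one nilpotent matrix of exactly the form $\frac{1}{2i\lambda}\bigl(\begin{smallmatrix}1&e^{-2i\lambda j}\\-e^{2i\lambda j}&-1\end{smallmatrix}\bigr)$ up to sign. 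This is the matrix $N_j$ of Lemma \ref{lemMatrix}, and the $\operatorname{sign}(j)$ factor accounts for the direction of propagation. The first step is therefore to verify this identification, so that the coefficients are given by products of the $M_j^{\pm1}$.

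The second step imposes the normalization at $+\infty$. Since the recursion runs downward from $+\infty$, we set
\[
(A_j^+,B_j^+)^T=\Bigl(\prod_{k>j}M_k^{-1}\Bigr)(1,0)^T,
\]
understood as a limit. Equivalently, with $P_j$ as in Lemma \ref{lemMatrix}, this equals $P_j P_\infty^{-1}(1,0)^T$, up to the treatment of the node $0$ and of negative $j$, where one just iterates the finite recursion.
\begin{itemize}
\item Lemma \ref{lemMatrix}(2) gives the existence of $P_\infty^{-1}(1,0)^T$ for $\lambda\in\C^+\cup\{0\}$, together with holomorphy in $\C^+$.
\item Lemma \ref{lemMatrix}(1) gives existence and $C^1$ regularity for $\lambda\in\R^*$.
\item Holomorphy and $C^1$ regularity of each $(A_j,B_j)$ then follow, since these are finite products of holomorphic or $C^1$ matrices applied to the limit vector.
\end{itemize}
The convergence $(A_j^+,B_j^+e^{2\operatorname{Im}(\lambda)j})\to(1,0)$ as $j\to\infty$ follows from the tail estimate in the proof of Lemma \ref{lemMatrix}: the difference to the limit is bounded by $\sum_{k>j}|\alpha_k|$. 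For $B_j$ we must track the factor $e^{2i\lambda j}$ carried by the second component of $N_j(1,0)^T$, which explains the weight $e^{2\operatorname{Im}\lambda\, j}$. This yields \eqref{eqJost4}.

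For $\lambda\in\R^*$, boundedness of $\operatorname{f}_\pm$ follows because $\|M_j\|\le 1+|\alpha_j|/|\lambda|$. Hence all $P_j$ and their inverses are bounded uniformly in $j$ by $\exp(\|\alpha\|_{\ell^1(\Z)}/|\lambda|)$, so $(A_j,B_j)$ is bounded in $j$, and $|e^{\pm i\lambda x}|=1$.

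For $\lambda=0$ we repeat the argument with the basis $\{1,x\}$. Matching $A_{j-1}+B_{j-1}j=A_j+B_jj$ and $B_j-B_{j-1}=\alpha_j(A_j+B_jj)$ produces the nilpotent matrix with entries $-j,-j^2,1,j$, and the $\alpha\in\ell^{1,1}(\Z)$ case of Lemma \ref{lemMatrix}(2) gives convergence to $(1,0)$, which is \eqref{eqJostSol15}. Uniqueness comes from the fact that the space of solutions of \eqref{eqJost1}--\eqref{eqJost3} is two-dimensional, since it is determined by $(A_j,B_j)$ at a single $j$. Combined with the asymptotic condition, this fixes the solution: a second solution would differ from the first by a decaying solution, and the recursion forces its coefficients to be zero.

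The main obstacle is bookkeeping. We must match the sign conventions and the ordering of the products in $P_j$, with the $\operatorname{sign}(j)$ and the split at $j=0$, to the transfer matrices we derive. Above all we must extract the weighted convergence of $B_j^+$ in $\C^+$, where $P_\infty^{-1}$ itself may not exist; only its action on $(1,0)^T$ does. We would handle this by working throughout with vectors rather than matrices, as in Lemma \ref{lemMatrix}(2).
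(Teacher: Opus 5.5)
Your proposal follows essentially the same route as the paper. You derive the transfer matrices $M_j = I_2 + \alpha_j N_j$ from the continuity and jump conditions, fix the normalization through the limit vectors $P_{\infty}^{-1}(1,0)^T$ and $P_{-\infty}^{-1}(0,1)^T$ of Lemma \ref{lemMatrix}, propagate with $P_j$, and inherit holomorphy, $C^1$ regularity and boundedness from that lemma. Your vector-level expansion, which tracks the factor $e^{2i\lambda k_1}$ in the second component, is a more direct justification of $B_j^+(\lambda)e^{2\operatorname{Im}(\lambda) j}\to 0$ than the paper's analysis step; the same expansion also gives $j B_j^+(0)\to 0$, which is what $\operatorname{f}_+(0,x)\to 1$ actually requires at $\lambda=0$ (more than $(A_j^+,B_j^+)\to(1,0)$).
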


\begin{proof}
    The proof is divided into two steps, each one proving an assertion. We proceed by analysis-synthesis for the construction of the solutions.

    \medskip
    
    \noindent \emph{Proof of $(1)$.} Let $\lambda \in \C^+ \cup \R^*$. We start by the analysis part of the argument. If $\operatorname{f}_\pm (\lambda, \cdot)$ satisfies \eqref{eqJost1}, then
    \begin{equation*}
        \operatorname{f}_\pm (\lambda, x) = \sum_{j \in \Z} \left( A_j^\pm e^{i \lambda x} + B_j^\pm e^{-i \lambda x} \right) \mathbf{1}_{[j, j+1)} (x)
    \end{equation*}
    for $x \in \R$ and for some $(A_j^\pm, B_j^\pm) = (A_j^\pm (\lambda), B_j^\pm (\lambda)) \subset \C^2$.
    
    If $\operatorname{f}_\pm (\lambda, \cdot)$ satisfy \eqref{eqJost2}-\eqref{eqJost3}, then, for $j \in \Z$,
    \begin{equation*}
        A_j^\pm e^{i \lambda j} + B_j^\pm e^{-i \lambda j}
        = A_{j-1}^\pm e^{i \lambda j} + B_{j-1}^\pm e^{-i \lambda j}
    \end{equation*}
    and
    \begin{equation*}
        \left( i \lambda A_j^\pm e^{i \lambda j} - i \lambda B_j^\pm e^{-i \lambda j} \right) - \left( i \lambda A_{j-1}^\pm e^{i \lambda j} - i \lambda B_{j-1}^\pm e^{-i \lambda j} \right)
        = \alpha_j \left( A_j^\pm e^{i \lambda j} + B_j^\pm e^{-i \lambda j} \right),
    \end{equation*}
    so that
    \begin{equation*}
        \begin{pmatrix}
            e^{i \lambda j} & e^{-i \lambda j} \\
            (i \lambda - \alpha_j) e^{i \lambda j} & (-i \lambda - \alpha_j) e^{-i \lambda j}
        \end{pmatrix}
        \begin{pmatrix}
            A_j^\pm \\
            B_j^\pm
        \end{pmatrix}
        =
        \begin{pmatrix}
            e^{i \lambda j} & e^{-i \lambda j} \\
            i \lambda e^{i \lambda j} & -i \lambda e^{-i \lambda j}
        \end{pmatrix}
        \begin{pmatrix}
            A_{j-1}^\pm \\
            B_{j-1}^\pm
        \end{pmatrix}.
    \end{equation*}
    Then,
    \begin{equation} \label{eqJostSol11}
        \begin{pmatrix}
            A_j^\pm \\
            B_j^\pm
        \end{pmatrix}
        =
        \begin{cases}
            M_j
            \begin{pmatrix}
                A_{j-1}^\pm \\
                B_{j-1}^\pm
            \end{pmatrix}
            & \text{ if } j \geq 1, \\
            M_j
            \begin{pmatrix}
                A_{j+1}^\pm \\
                B_{j+1}^\pm
            \end{pmatrix}
            & \text{ if } j \leq -1,
        \end{cases}
    \end{equation}
    where $M_j \in \operatorname{GL}_2 (\C)$ has been defined in \eqref{eqMatrix1}. In particular, $(A_j^\pm, B_j^\pm)$ is bounded in $\C^2$ by Lemma \ref{lemMatrix}.
    
    If $\operatorname{f}_+ (\lambda, \cdot)$ satisfies \eqref{eqJost4}, then
    \begin{equation} \label{eqJostSol16}
        \operatorname{f}_+ (\lambda, x) e^{-i \lambda x} - 1 \to 0
    \end{equation}
    as $x \to \infty$. Assume by contradiction that $|B_j^+| e^{2 \operatorname{Im} (\lambda) j}$ does not converge to $0$ as $j \to \infty$. For any $j \in \Z$ and $x \in [j, j+1]$, we have
    \begin{equation} \label{eqJostSol12}
        \operatorname{Re} \left( \operatorname{f}_+ (\lambda, x) e^{-i \lambda x} - 1 \right)
        = \left( \operatorname{Re} (A_j^+) - 1 \right) + |B_j^+| e^{2 \operatorname{Im} (\lambda) x} \cos \left( -2 \operatorname{Re} (\lambda) x + \operatorname{arg} (B_j^+) \right).
    \end{equation}
    Applying the above formula to $x = j$ and $x = j + 1/2$, subtracting both terms and passing to the limit using \eqref{eqJostSol16}, we get
    \begin{multline*}
        |B_j^+| e^{2 \operatorname{Im} (\lambda) j} \\
        \times \left( \cos \left( -2 \operatorname{Re} (\lambda) j + \operatorname{arg} (B_j^+) \right) - e^{\operatorname{Im} (\lambda)} \cos \left( -2 \operatorname{Re} (\lambda) j + \operatorname{arg} (B_j^+) - \operatorname{Re} (\lambda) \right) \right)
        \to 0 
    \end{multline*}
    as $j \to \infty$. In particular, there exists $R (\lambda) > 0$ and $\phi (\lambda) \in \R$
    \begin{multline*}
        \cos \left( -2 \operatorname{Re} (\lambda) j + \operatorname{arg} (B_j^+) \right) - e^{\operatorname{Im} (\lambda)} \cos \left( -2 \operatorname{Re} (\lambda) j + \operatorname{arg} (B_j^+) - \operatorname{Re} (\lambda) \right) \\
        = R (\lambda) \cos \left( -2 \operatorname{Re} (\lambda) j + \operatorname{arg} (B_j^+) - \phi (\lambda) \right)
        \to 0.
    \end{multline*}
    Thus,
    \begin{equation*}
        -2 \operatorname{Re} (\lambda) j + \operatorname{arg} (B_j^+) - \phi (\lambda)
        \to \frac{\pi}{2} \mod{\pi}.
    \end{equation*}
    Proceeding in the same way with $x = j + h$ and $x = j + 1/2 + h$, where $h < 1/8$, we obtain
    \begin{equation*}
        -2 \operatorname{Re} (\lambda) j + \operatorname{arg} (B_j^+)  - \phi (\lambda) + h
        \to \frac{\pi}{2} \mod{\pi}.
    \end{equation*}
    This is a contradiction. This implies that $|B_j^+| e^{\operatorname{Im} (\lambda) j} \to 0$ as $j \to \infty$, so that $B_j^+ \to 0$ (with exponential decay) and $\operatorname{Re} (A_j^+) \to 1$ as $j \to \infty$ by \eqref{eqJostSol16}. Similarly, we have that $\operatorname{Im} (A_j^+) \to 0$ as $j \to \infty$. Finally, $(A_j^+, B_j^+) \to (1, 0)$ as $j \to \infty$. Arguing as before, we obtain that, if $\operatorname{f}_- (\lambda, \cdot)$ satisfies \eqref{eqJost5}, then $(A_j^-, B_j^-) \to (0, 1)$ as $j \to -\infty$. In particular, the coefficients $(A_0^\pm, B_0^\pm)$ satisfy
    \begin{equation} \label{eqJostSol2}
        \begin{pmatrix}
            A_0^+ \\
            B_0^+
        \end{pmatrix}
        \coloneqq P_{\infty}^{-1}
        \begin{pmatrix}
            1 \\
            0
        \end{pmatrix}
        , \quad
        \begin{pmatrix}
            A_0^- \\
            B_0^-
        \end{pmatrix}
        \coloneqq P_{-\infty}^{-1}
        \begin{pmatrix}
            0 \\
            1
        \end{pmatrix},
    \end{equation}
    where $P_{\infty}^{-1} (1, 0)^T$ and $P_{-\infty}^{-1} (0, 1)^T$ have been defined in Lemma \ref{lemMatrix}. This concludes the analysis part of the argument.
    
    We now proceed with the synthesis part of the argument. Let $(A_0^\pm, B_0^\pm) \in \C^2$ given by \eqref{eqJostSol2}. Define $(A_j^\pm, B_j^\pm) \subset \C^2$ by
    \begin{equation} \label{eqJostSol3}
        \begin{pmatrix}
            A_j^\pm \\
            B_j^\pm
        \end{pmatrix}
        = P_j
        \begin{pmatrix}
            A_0^\pm \\
            B_0^\pm
        \end{pmatrix}
    \end{equation}
    for $j \in \Z^*$, where $P_j \in \operatorname{GL}_2 (\C)$ have been defined in Lemma \ref{lemMatrix}. Observe that \eqref{eqJostSol3} is equivalent to \eqref{eqJostSol11}. Finally, we define $\operatorname{f}_\pm (\lambda, \cdot)$ as in \eqref{eqJostSol14}. It is direct that $\operatorname{f}_\pm (\lambda, \cdot)$ satisfies \eqref{eqJost1}. Furthermore, $\operatorname{f}_\pm (\lambda, \cdot)$ satisfies \eqref{eqJost2} and \eqref{eqJost3} by \eqref{eqJostSol3}. Next, we have $(A_j^+, B_j^+) \to (1, 0)$ as $j \to \infty$ and $(A_j^-, B_j^-) \to (0, 1)$ as $j \to -\infty$ by \eqref{eqJostSol2} and Lemma \ref{lemMatrix}. Therefore, $\operatorname{f}_+ (\lambda, \cdot)$ satisfies \eqref{eqJost4} and $\operatorname{f}_- (\lambda, \cdot)$ satisfies \eqref{eqJost5}. We deduce that $\operatorname{f}_+ (\lambda, \cdot)$ and $\operatorname{f}_- (\lambda, \cdot)$ are the Jost solutions and this concludes the synthesis part of the argument.
    
    Let $\lambda \in \C^+$. As $P_\infty (\lambda)^{-1} (1, 0)^T$ and $P_{-\infty} (\lambda)^{-1} (0, 1)^T$ are holomorphic on $\C^+$ by Lemma \ref{lemMatrix}-$(2)$, we obtain that coefficients $(A_j^\pm (\lambda), B_j^\pm (\lambda))$ are holomorphic on $\C^+$ by \eqref{eqJostSol3} and the fact that $(P_j (\lambda))$ are holomorphic on $\C^+$. This proves $(a)$.
    
    Let $\lambda \in \R^*$. As $P_\infty (\lambda)^{-1}$ and $P_{-\infty} (\lambda)^{-1}$ are $\C^1$ on $\R^*$ by Lemma \ref{lemMatrix}-$(1)$, we obtain that coefficients $(A_j^\pm (\lambda), B_j^\pm (\lambda))$ are $C^1$ on $\R^*$ by \eqref{eqJostSol3} and the fact that $(P_j (\lambda))$ are $C^1$ on $\R^*$. Furthermore, we have that $(P_j (\lambda))$ is bounded in $\operatorname{GL}_2 (\C)$ as it is a convergent sequence by Lemma \ref{lemMatrix}. In particular, coefficients $(A^\pm_j (\lambda), B^\pm_j (\lambda))$ are bounded in $\R^*$ by \eqref{eqJostSol3}. Thus, $\operatorname{f}_\pm (\lambda, \cdot) \in L^\infty(\R)$ as they are explicitly given by \eqref{eqJostSol14}. This proves $(b)$.
    
    \medskip
    
    \noindent \emph{Proof of $(2)$.} Let $\lambda = 0$. The reasoning is similar to the first part of the proof, so certain details are omitted for brevity. We start by the analysis part of the argument. If $\operatorname{f}_\pm (0, \cdot)$ satisfies \eqref{eqJost1}, then there exists $(A_j^\pm, B_j^\pm) \subset \C^2$ such that, for $x \in \R$,
    \begin{equation*}
        \operatorname{f}_\pm (0, x) = \sum_{j = -\infty}^\infty \left( A_j^\pm + B_j^\pm x \right) \mathbf{1}_{[j, j+1)} (x).
    \end{equation*}

    If $\operatorname{f}_\pm (0, \cdot)$ satisfies \eqref{eqJost2} and \eqref{eqJost3}, then, for $j \in \Z$, \eqref{eqJostSol11} is satisfied, where $M_j = M_j (0) \in \operatorname{GL}_2 (\C)$ has been defined in \eqref{eqMatrix1}.

    If $\operatorname{f}_+ (0, \cdot)$ satisfies \eqref{eqJost4}, then
    \begin{equation*}
        \sum_{j \in \Z} \left( A_j^+ + B_j^+ x \right) \mathbf{1}_{[j, j+1)} (x) 
        \to 1
    \end{equation*}
    as $x \to \infty$. Let $h \in \R$ be small and $(x_j)_{j \in \Z} \subset \R$ such that $x_j$ and $x_j + h \in (j, j+1)$. Taking the real part of the above expression, we have
    \begin{equation*}
        \operatorname{Re} (A_j^+) - 1 + \operatorname{Re} (B_j^+) x_j
        \to 0
    \end{equation*}
    and
    \begin{equation*}
        \operatorname{Re} (A_j^+) - 1 + \operatorname{Re} (B_j^+) (x_j + h)
        \to 0
    \end{equation*}
    as $j \to \infty$. As $h$ has been taken arbitrary, we obtain that $\operatorname{Re} (B_j^+) \to 0$ as $j \to \infty$. Applying the same reasoning to the imaginary part yields $(A_j^+, B_j^+) \to (1, 0)$ as $j \to \infty$. Similarly, if $\operatorname{f}_- (0, \cdot)$ satisfies \eqref{eqJost5}, then $(A_j^-, B_j^-) \to (1, 0)$ as $j \to -\infty$. This concludes the analysis part of the argument.

    We now proceed with the synthesis part of the argument. We define $(A_0^\pm, B_0^\pm) \in \C^2$ as
    \begin{equation*}
        \begin{pmatrix}
            A_0^+ \\
            B_0^+
        \end{pmatrix}
        \coloneqq P_{\infty}^{-1}
        \begin{pmatrix}
            1 \\
            0
        \end{pmatrix}
        , \quad
        \begin{pmatrix}
            A_0^- \\
            B_0^-
        \end{pmatrix}
        \coloneqq P_{-\infty}^{-1}
        \begin{pmatrix}
            1 \\
            0
        \end{pmatrix},
    \end{equation*}
    where $P_{\infty}^{-1} (1, 0)^T$ and $P_{-\infty}^{-1} (1, 0)^T$ have been defined in Lemma \ref{lemMatrix}-$(2)$; the sequences $(A_j^\pm)$ and $(B_j^\pm)$ as in \eqref{eqJostSol3}, where $P_j = P_j (0) \in \operatorname{GL}_2 (\C)$ have been defined in Lemma \ref{lemMatrix}; and $\operatorname{f}_\pm (0, \cdot)$ as in \eqref{eqJostSol15}. The remainder of the synthesis follows the same steps as in the proof of $(1)$ and we deduce that $\operatorname{f}_+ (0, \cdot)$ and $\operatorname{f}_- (0, \cdot)$ are the Jost solutions. This concludes the proof.
\end{proof}

Let $f, g$ be solutions of \eqref{eqJost1}. For $x \in \R \setminus \Z$, we define the Wronskian $W (f (x), g (x))$ as
\begin{equation*}
    W \left( f (x), g (x) \right)
    = f (x) \partial_x g (x) - \partial_x f (x) g (x).
\end{equation*}
Observe that the Wronskian is constant on each interval $(j, j+1)$ but depends \textit{a priori} of $j$. The following lemma states that it is constant on $\R \setminus \Z$ if $f$ and $g$ satisfy continuity on $\R$ and \eqref{eqJost3}.

\begin{lemma} \label{lemWskDelta}
    Let $\lambda \in \R \cup \C^+$ and $f, g$ be solutions of \eqref{eqJost1} which both satisfy \eqref{eqJost2} and \eqref{eqJost3}. Then, $x \mapsto W (f (x), g (x))$ is constant on $\R \setminus \Z$.
\end{lemma}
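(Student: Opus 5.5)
The plan has two parts. First, I will show that the Wronskian is constant on each interval $(j,j+1)$. Second, I will show that the constant values on adjacent intervals agree across each integer point $j$. Together these give constancy on all of $\R\setminus\Z$.

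\emph{Step 1: constancy on each interval.} On an interval $(j,j+1)$ both $f$ and $g$ are smooth solutions of $-\partial_{xx}u=\lambda^2 u$. Differentiating gives
\begin{equation*}
    \partial_x W(f,g) = f\,\partial_{xx}g - \partial_{xx}f\,g = -\lambda^2 fg + \lambda^2 fg = 0 .
\end{equation*}
Hence $W$ is constant on $(j,j+1)$; call this value $W_j$. The same computation shows that $W$ extends continuously to the closed interval, using the one-sided limits of $f$, $g$, $\partial_x f$, $\partial_x g$ at $j^\pm$. These limits exist because the explicit form \eqref{eqJostSol14} (or \eqref{eqJostSol15} when $\lambda=0$) gives a smooth expression on each piece.

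\emph{Step 2: matching across an integer $j$.} It suffices to show $W_{j-1}=W_j$, that is, $W$ evaluated at $j-$ equals $W$ evaluated at $j+$. By continuity \eqref{eqJost2}, the values $f(j)$ and $g(j)$ are unambiguous. By the jump condition \eqref{eqJost3},
\begin{equation*}
    \partial_x f(j+) = \partial_x f(j-) + \alpha_j f(j), \qquad \partial_x g(j+) = \partial_x g(j-) + \alpha_j g(j).
\end{equation*}
Substituting these into the Wronskian gives
\begin{equation*}
    W(j+) = f(j)\bigl(\partial_x g(j-) + \alpha_j g(j)\bigr) - \bigl(\partial_x f(j-) + \alpha_j f(j)\bigr) g(j) = W(j-) + \alpha_j\bigl(f(j)g(j) - f(j)g(j)\bigr) = W(j-).
\end{equation*}
Induction over $j\in\Z$, in both directions, then yields that $W$ is constant on $\R\setminus\Z$.

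There is no real obstacle here. The only point needing care is making sense of the one-sided limits at the integers. These are guaranteed by the piecewise explicit representation of solutions of \eqref{eqJost1}: on each interval the solution is a combination of $e^{\pm i\lambda x}$, or of $1$ and $x$ when $\lambda=0$. The key structural fact is that the delta jumps contribute the symmetric term $\alpha_j f(j)g(j)$ to both products in the Wronskian, so the two contributions cancel.
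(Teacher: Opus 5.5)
Your proof is correct, and it takes a more direct route than the paper. The paper writes $f$ and $g$ piecewise with coefficients $(A_j,B_j)$ and $(C_j,D_j)$, and computes $W=-2i\lambda(A_jD_j-B_jC_j)$ (or $A_jD_j-B_jC_j$ if $\lambda=0$) on $(j,j+1)$. It then propagates this quantity across the integers using the transfer matrices $M_j$ of Lemma \ref{lemMatrix} and the identity $M_j^T J M_j=\det(M_j)\,J=J$, with $J=\begin{pmatrix} 0 & 1 \\ -1 & 0 \end{pmatrix}$.

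You instead differentiate $W$ inside each interval and apply the jump condition \eqref{eqJost3} directly at $x=j$, where the two terms $\alpha_j f(j)g(j)$ cancel. This is the same determinant-one mechanism, seen in the variables $(f,\partial_x f)$: there the jump is the shear $\begin{pmatrix} 1 & 0 \\ \alpha_j & 1 \end{pmatrix}$, and $W$ is the $J$-pairing of $(f,\partial_x f)$ with $(g,\partial_x g)$.

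Your version has three advantages:
\begin{enumerate}
\item It needs no coefficient bookkeeping.
\item It treats $\lambda=0$ and $\lambda\neq 0$ at once.
\item It covers every junction, including $x=0$, by one computation. The paper writes out only $j\ge 1$ and defers the rest to \emph{a similar computation}. Moreover, the $\operatorname{sign}(j)$ factor in $N_j$ gives $M_0=I_2$, so the jump at $0$ is not literally encoded by the paper's matrices.
\end{enumerate}

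What the paper's route provides is the coefficient formula \eqref{eqWskDelta}. It is used afterwards, in \eqref{eqW}, in Proposition \ref{prpWronski}, and in the Wronskian computations preceding it, to evaluate Wronskians through the limits $j\to\pm\infty$. However, that formula is a one-line evaluation on a single interval, so pairing it with your argument loses nothing.
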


\begin{proof}
    First, observe that there exists $(A_j, B_j, C_j, D_j)_{j \in \Z} \subset \C^4$ such that, if $\lambda \neq 0$,
    \begin{align*}
        f (x) = \sum_{j \in \Z} \left( A_j e^{i \lambda x} + B_j e^{-i \lambda x} \right) \boldsymbol{1}_{[j, j+1)} (x), \quad
        g (x) = \sum_{j \in \Z} \left( C_j e^{i \lambda x} + D_j e^{-i \lambda x} \right) \boldsymbol{1}_{[j, j+1)} (x)
    \end{align*}
    and, if $\lambda = 0$,
    \begin{align*}
        f (x) = \sum_{j \in \Z} \left( A_j + B_j x \right) \boldsymbol{1}_{[j, j+1)} (x), \quad
        g (x) = \sum_{j \in \Z} \left( C_j + D_j x \right) \boldsymbol{1}_{[j, j+1)} (x).
    \end{align*}
    Furthermore, for $j \in \Z$ and $x \in (j, j+1)$, we have
    \begin{equation} \label{eqWskDelta}
        W \left( f (x), g (x) \right)
        =
        \begin{cases}
            -2 i \lambda \left( A_j D_j - B_j C_j \right) & \text{ if } \lambda \neq 0, \\
            A_j D_j - B_j C_j & \text{ if } \lambda = 0,
        \end{cases}
    \end{equation}
    so the Wronskian is piece-wise constant. For $j \geq 1$, we have
    \begin{equation*}
        M_j^T
        \begin{pmatrix}
            0 & 1 \\
            -1 & 0
        \end{pmatrix}
        M_j
        = \det (M_j)
        \begin{pmatrix}
            0 & 1 \\
            -1 & 0
        \end{pmatrix}
        =
        \begin{pmatrix}
            0 & 1 \\
            -1 & 0
        \end{pmatrix}
    \end{equation*}
    where $M_j$ has been defined in \eqref{eqMatrix1}, and
    \begin{equation*}
        \begin{pmatrix}
            A_j \\
            B_j
        \end{pmatrix}
        = M_j
        \begin{pmatrix}
            A_{j-1} \\
            B_{j-1}
        \end{pmatrix},
        \quad
        \begin{pmatrix}
            C_j \\
            D_j
        \end{pmatrix}
        = M_j
        \begin{pmatrix}
            C_{j-1} \\
            D_{j-1}
        \end{pmatrix}.
    \end{equation*}
    by continuity of $f$ and $g$ on $\R$ and the fact that they satisfy \eqref{eqJost3}. Thus,
    \begin{align*}
        A_j D_j - B_j C_j
        & = (A_j \, B_j)
        \begin{pmatrix}
            0 & 1 \\
            -1 & 0
        \end{pmatrix}
        \begin{pmatrix}
            C_j \\
            D_j
        \end{pmatrix} \\
        & = (A_{j-1} \, B_{j-1}) M_j^T
        \begin{pmatrix}
            0 & 1 \\
            -1 & 0
        \end{pmatrix}
        M_j
        \begin{pmatrix}
            C_{j-1} \\
            D_{j-1}
        \end{pmatrix} \\
        & = (A_{j-1} \, B_{j-1})
        \begin{pmatrix}
            0 & 1 \\
            -1 & 0
        \end{pmatrix}
        \begin{pmatrix}
            C_{j-1} \\
            D_{j-1}
        \end{pmatrix} \\
        & = A_{j-1} D_{j-1} - B_{j-1} C_{j-1}.
    \end{align*}
    Thus, the Wronskian \eqref{eqWskDelta} is constant on $x \in \R^+$. A similar computation holds for $j \leq -1$. This concludes the proof.
\end{proof}

In what follows, we shall write $W (f (x), g (x))$ of two functions $f, g$ simply as $W (f, g)$ if it does not depend on $x$.

The Wronskian of the Jost solutions $\operatorname{W} (\lambda)$ is given by
\begin{align} \label{eqW}
    \operatorname{W} (\lambda)
    & = \operatorname{f}_+ (\lambda, \cdot) \partial_x \operatorname{f}_- (\lambda, \cdot) -  \partial_x \operatorname{f}_+ (\lambda, \cdot) \operatorname{f}_- (\lambda, \cdot) \\
    & =
    \begin{cases}
        -2 i \lambda \left( A_j^+ (\lambda) B_j^- (\lambda) - B_j^+ (\lambda) A_j^- (\lambda) \right) & \text{ if } \lambda \neq 0, \\
        A_j^+ (0) B_j^- (0) - B_j^+ (0) A_j^- (0) & \text{ if } \lambda = 0,
    \end{cases}
\end{align}
for any $j \in \Z$, and does not depends on $j$ by Lemma \ref{lemWskDelta}.

Let $\lambda \in \C^+ \cup \R^*$. As $\operatorname{f}_+ (\lambda, \cdot)$ and $\operatorname{f}_+ (-\lambda, \cdot)$ both satisfy \eqref{eqJost2} and \eqref{eqJost3}, their Wronskian $W (\operatorname{f}_+ (\lambda, \cdot), \operatorname{f}_+ (-\lambda, \cdot))$ is constant by Lemma \ref{lemWskDelta}. Using their explicit form given by Proposition \ref{prpJostSol}, we have
\begin{align*}
    W \left( \operatorname{f}_+ (\lambda, \cdot), \operatorname{f}_+ (-\lambda, \cdot) \right)
    = - 2 i \lambda \left( |A_j^+|^2 - |B_j^+|^2 \right)
    \to - 2 i \lambda
    \neq 0.
\end{align*}
as $j \to \infty$. Thus, $\operatorname{f}_+ (\lambda, \cdot)$ and $\operatorname{f}_+ (-\lambda, \cdot)$ are linearly independent on $\R$. Therefore they form a basis of solutions to the second order equation \eqref{eqJost1}. Thus, there exist $\operatorname{a}_\pm (\lambda), \operatorname{b}_\pm (\lambda) \in \C$ such that
\begin{align}
    \operatorname{f}_- (\lambda, \cdot)
    & = \operatorname{a}_- (\lambda) \operatorname{f}_+ (\lambda, \cdot) + \operatorname{b}_- (\lambda) \operatorname{f}_+ (-\lambda, \cdot), \label{eqCoeff1} \\
    \operatorname{f}_+ (\lambda, \cdot)
    & = \operatorname{a}_+ (\lambda) \operatorname{f}_- (\lambda, \cdot) + \operatorname{b}_+ (\lambda) \operatorname{f}_- (-\lambda, \cdot). \label{eqCoeff2}
\end{align}
In particular, we have
\begin{equation*}
    \operatorname{W} (\lambda)
    = -2 i \lambda \operatorname{b}_- (\lambda)
    = -2 i \lambda \operatorname{b}_+ (\lambda),
\end{equation*}
so that
\begin{equation*}
    \operatorname{b}_+ (\lambda)
    = \operatorname{b}_- (\lambda)
    = \operatorname{b} (\lambda)
\end{equation*}
and $\operatorname{W} (\lambda) = 0$ if and only if $\operatorname{f}_+ (\lambda, \cdot)$ and $\operatorname{f}_- (\lambda, \cdot)$ are linearly dependent on $\R$.

The following result states that $\operatorname{W} (\lambda)$ is non-zero on $\C^+ \cup \R^*$. It has been proven in \cite{Fa64} in the case of a potential $V$ which is a function.

\begin{proposition} \label{prpWronski}
    For all $\lambda \in \C^+ \cup \R^*$, we have $|\operatorname{b} (\lambda)| \geq 1$. In particular, $\operatorname{W} (\lambda) \neq 0$.
\end{proposition}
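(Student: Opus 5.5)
The plan is to prove the bound first on the real axis by a conservation identity, and then to carry it into $\C^+$ by a maximum-principle argument applied to the transmission coefficient $\operatorname{T} (\lambda) = 1/\operatorname{b} (\lambda) = -2 i \lambda / \operatorname{W} (\lambda)$.

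\emph{Step 1: real $\lambda \in \R^*$.} Since $\alpha$ is real, \eqref{eqJost1}--\eqref{eqJost3} are invariant under complex conjugation. The asymptotics \eqref{eqJost4}--\eqref{eqJost5} and the uniqueness part of Proposition \ref{prpJostSol} therefore give $\operatorname{f}_\pm (-\lambda, \cdot) = \overline{\operatorname{f}_\pm (\lambda, \cdot)}$. I will conjugate \eqref{eqCoeff1} and obtain the companion relation for $\operatorname{f}_- (-\lambda, \cdot)$. Then I compute the constant Wronskian (Lemma \ref{lemWskDelta}) $W (\operatorname{f}_- (\lambda, \cdot), \operatorname{f}_- (-\lambda, \cdot))$ in two ways. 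At $-\infty$, the convergence $(A_j^-, B_j^-) \to (0,1)$ evaluates it to $2 i \lambda$. Expanding instead on the basis $\operatorname{f}_+ (\pm \lambda, \cdot)$, whose Wronskian is $-2 i \lambda$, evaluates it to $2 i \lambda \left( |\operatorname{b} (\lambda)|^2 - |\operatorname{a}_- (\lambda)|^2 \right)$. Comparing the two gives
\begin{equation*}
    |\operatorname{b} (\lambda)|^2 = 1 + |\operatorname{a}_- (\lambda)|^2 \geq 1.
\end{equation*}

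\emph{Step 2: holomorphy and behaviour of $\operatorname{T}$ on $\C^+$.} By \eqref{eqW} and Proposition \ref{prpJostSol}$(1)(a)$, $\operatorname{W}$ is holomorphic on $\C^+$. By Proposition \ref{prpJostSol}$(1)(b)$ it is continuous up to $\R^*$. I will then extract from the series bounds in the proof of Lemma \ref{lemMatrix} the asymptotics
\begin{equation*}
    \operatorname{W} (\lambda) = -2 i \lambda \left( 1 + O (\|\alpha\|_{\ell^1 (\Z)} / |\lambda|) \right) \quad \text{as } |\lambda| \to \infty \text{ in } \overline{\C^+},
\end{equation*}
which is the Born term $\operatorname{b} (\lambda) \approx 1 - \frac{1}{2 i \lambda} \sum_j \alpha_j$. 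Hence $\operatorname{T} (\lambda) \to 1$ at infinity. Near $\lambda = 0$, I will use $\alpha \in \ell^{1,1} (\Z)$ and Proposition \ref{prpJostSol}$(2)$ to show that $\operatorname{T}$ stays bounded.

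\emph{Step 3: maximum principle.} If $\operatorname{W}$ has no zero in $\C^+$, then $\operatorname{T}$ is holomorphic and bounded on $\C^+$. It satisfies $|\operatorname{T}| \leq 1$ on $\R^*$ by Step 1, and it tends to $1$ at infinity. Phragmén--Lindelöf, or the maximum modulus principle on large half-discs with a small indentation at $0$, then gives $|\operatorname{T}| \leq 1$ on $\C^+$, that is $|\operatorname{b} (\lambda)| \geq 1$. In particular $\operatorname{W} \neq 0$ there.

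\emph{Main obstacle.} The hard step is ruling out zeros of $\operatorname{W}$ in $\C^+$ in Step 3. A zero at $\lambda_0 \in \C^+$ makes $\operatorname{f}_+ (\lambda_0, \cdot)$ proportional to $\operatorname{f}_- (\lambda_0, \cdot)$. Both decay exponentially at their respective ends by Proposition \ref{prpJostSol}$(1)$, so this common function is an $L^2$ eigenfunction of $\H_\alpha$ with eigenvalue $\lambda_0^2$. Self-adjointness forces $\lambda_0^2 \in \R$, so $\lambda_0 = i \kappa$ with $\kappa > 0$. Zeros off the imaginary axis are therefore excluded.

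For $\lambda_0 = i \kappa$ one must show that $-\kappa^2 \notin \sigma_{dis} (\H_\alpha)$. This is where the spectral input enters. I would first try to combine the quadratic form \eqref{eqBlnFrm}, evaluated on this eigenfunction, with the relation $\operatorname{b} = -\operatorname{W} / (2 i \lambda)$. If that fails, I would instead restrict the maximum-principle argument to the region where $\operatorname{T}$ is known to be holomorphic, and treat the imaginary-axis points separately. This step needs the most care, since the general argument above only controls $\operatorname{W}$ away from $i \R^+$.
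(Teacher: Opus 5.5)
Your Step 1 is correct and is essentially the paper's own proof. The paper compares the constant Wronskian of $\operatorname{f}_-(\lambda,\cdot)$ and $\overline{\operatorname{f}_-(\lambda,\cdot)}$ (your $\operatorname{f}_-(-\lambda,\cdot)$ when $\lambda$ is real) at $-\infty$ and, via \eqref{eqCoeff1}, at $+\infty$. It obtains $|\operatorname{b}(\lambda)|^2 = 1 + |\operatorname{a}_-(\lambda)|^2$.

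The genuine gap is the passage to $\C^+$, and it cannot be closed. The step you isolate, proving $-\kappa^2 \notin \sigma_{dis}(\H_\alpha)$, is false in general, and so is the statement on $\C^+$. As you correctly note, zeros of $\operatorname{W}$ in $\C^+$ are exactly the points $i\kappa$ with $-\kappa^2$ an eigenvalue. $\H_\alpha$ may have negative eigenvalues, which is why $\operatorname{P}$ appears in Theorem \ref{thMain}.

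Take a single attractive delta: $\alpha_0 = -a<0$ and $\alpha_j = 0$ otherwise, which lies in every $\ell^{1,s}(\Z)$. Then $\operatorname{f}_+(\lambda,x) = e^{i\lambda x}$ for $x \geq 0$ and $\operatorname{f}_-(\lambda,x) = e^{-i\lambda x}$ for $x \leq 0$. The jump condition forces $\operatorname{f}_+(\lambda,x) = (1 - \frac{ia}{2\lambda}) e^{i\lambda x} + \frac{ia}{2\lambda} e^{-i\lambda x}$ for $x < 0$, so that
\[
    \operatorname{W}(\lambda) = -2i\lambda - a, \qquad \operatorname{b}(\lambda) = 1 - \frac{ia}{2\lambda}.
\]
Hence $\operatorname{W}(ia/2) = 0$, corresponding to the bound state $e^{-a|x|/2}$ at energy $-a^2/4$. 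Moreover $|\operatorname{b}(i\kappa)| = |1 - \frac{a}{2\kappa}| < 1$ for all $\kappa > a/4$.

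The quadratic form \eqref{eqBlnFrm} cannot exclude this. The identity $\|\partial_x f\|_{L^2(\R)}^2 + \sum_j \alpha_j |f(j)|^2 = -\kappa^2 \|f\|_{L^2(\R)}^2$ is perfectly consistent once some $\alpha_j < 0$. Treating the imaginary-axis points separately does not help either, because they are genuine zeros.

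What is true, and what your Step 1 proves, is the statement for $\lambda \in \R^*$. This is also all the paper uses later: Theorem \ref{thResKer} for $\lambda > 0$, Proposition \ref{prpNonZero}, and the low-energy estimate, all on the real axis.

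The paper's argument does not cover $\C^+$ either. For non-real $\lambda$, $\overline{\operatorname{f}_-(\lambda,\cdot)}$ solves the equation at energy $\overline{\lambda}^2 \neq \lambda^2$, so Lemma \ref{lemWskDelta} does not apply. Also, $\operatorname{f}_+(-\lambda,\cdot)$ in \eqref{eqCoeff1} is not defined for $\lambda \in \C^+$.

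Your Steps 2--3 are the right tool for $\C^+$. However, they yield $|\operatorname{b}| \geq 1$ there only under the extra hypothesis that $\H_\alpha$ has no negative eigenvalues, with additional care near $\lambda = 0$ when $\operatorname{W}(0) = 0$.
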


\begin{proof}
    Let $\lambda \in \C^+ \cup \R^*$. Using the explicit form of $\operatorname{f}_- (\lambda, \cdot)$ and $\overline{\operatorname{f}_- (\lambda, \cdot)}$ given by Proposition \ref{prpJostSol} and taking $j \to -\infty$, we have
    \begin{align*}
        W \left( \operatorname{f}_- (\lambda, x), \overline{\operatorname{f}_- (\lambda, x)} \right)
        & = 2 i \lambda \left( |A_j^- (\lambda)|^2 - |B_j^- (\lambda)|^2 \right) \\
        & = -2 i \lambda.
    \end{align*}
    Furthermore, using \eqref{eqCoeff1}, we have
    \begin{align*}
        W \left( \operatorname{f}_- (\lambda, x), \overline{\operatorname{f}_- (\lambda, x)} \right)
        & = 2 i \lambda \left( |\operatorname{a} (\lambda)|^2 - |\operatorname{b}_- (\lambda)|^2 \right)
        \left( |A_j^+ (\lambda)|^2 - |B_j^+ (\lambda)|^2 \right) \\
        & \to 2 i \lambda \left( |\operatorname{a} (\lambda)|^2 - |\operatorname{b}_- (\lambda)|^2 \right)
    \end{align*}
    as $j \to \infty$. However, $\operatorname{f}_- (\lambda, \cdot)$ and $\overline{\operatorname{f}_- (\lambda, \cdot)}$ satisfy \eqref{eqJost2} and \eqref{eqJost3}, hence their Wronskian is also constant by Lemma \ref{lemWskDelta}. Thus, combining the two values of $W ( \operatorname{f}_- (\lambda, x), \overline{\operatorname{f}_- (\lambda, x)} )$ obtained previously, we get
    \begin{equation*}
        |\operatorname{b} (\lambda)|^2 = 1 + |\operatorname{a}_- (\lambda)|^2,
    \end{equation*}
    so that $\operatorname{b} (\lambda) \neq 0$. Thus, by \eqref{eqCoeff1}, we obtain that $\operatorname{f}_+ (\lambda, \cdot)$ and $\operatorname{f}_- (\lambda, \cdot)$ are linearly independent on $\R$, so that $\operatorname{W} (\lambda) \neq 0$. This concludes the proof.
\end{proof}

\subsection{Properties of the Wronskian under stronger decay assumptions}

This subsection is devoted to prove the next proposition, which will be used in the proof of the low-energy part of the dispersive estimate (see Proposition \ref{prpLowEn}). An analog result was proved in \cite[Lemma $1$]{DeTr79} in the case of a potential $V$ which is a function.

\begin{proposition} \label{prpNonZero}
    Assume that $\alpha \in \ell^{1, 2} (\Z)$. Then, $\lambda \in \R \mapsto \operatorname{W} (\lambda) \lambda^{-1}$ is continuous and never vanishes.
\end{proposition}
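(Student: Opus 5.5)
Since $\operatorname{W}(\lambda)\lambda^{-1} = -2i\operatorname{b}(\lambda)$ for $\lambda \in \R^*$, the continuity of $\lambda \mapsto \operatorname{W}(\lambda)\lambda^{-1}$ on $\R^*$ and its non-vanishing there already follow from Proposition \ref{prpJostSol} ($C^1$ dependence of the coefficients) and Proposition \ref{prpWronski} ($|\operatorname{b}(\lambda)|\geq 1$). The whole issue is therefore the point $\lambda = 0$. There we must show two things. First, $\operatorname{W}(\lambda)/\lambda$ has a finite limit as $\lambda \to 0$. Second, this limit is nonzero. The plan is to use the stronger assumption $\alpha \in \ell^{1,2}(\Z)$ to obtain \emph{uniform} convergence of the transfer-matrix products near $\lambda = 0$. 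Once that is in hand, I identify the limit through a Wronskian at $\lambda = 0$ involving a second, linearly growing solution.

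\emph{Step 1: uniform control of the Jost solutions near $\lambda=0$.} I rewrite the Jost solutions in a form that does not degenerate as $\lambda\to0$. On $[j,j+1)$, write $\operatorname{f}_+(\lambda,x) = A_j^+ e^{i\lambda x} + B_j^+ e^{-i\lambda x}$, or equivalently in the basis $\{\cos\lambda x, \sin(\lambda x)/\lambda\}$. The transfer matrices $M_j(\lambda)$ written in this basis equal $I_2 + \alpha_j \tilde N_j(\lambda)$, where the entries of $\tilde N_j$ are bounded by $C(1+|j|)^2$, uniformly in $\lambda \in \R$. Indeed, these entries are built from $\cos(\lambda j)$, $\lambda\sin(\lambda j)$ and $\sin(\lambda j)/\lambda$, which reduce at $\lambda=0$ to the matrix $N_j(0)$ of Lemma \ref{lemMatrix}. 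The bound $\|\tilde N_j\|\lesssim (1+|j|)^2$ together with $\alpha\in\ell^{1,2}(\Z)$ makes the product $\prod_j (I+\alpha_j\tilde N_j(\lambda))$ converge absolutely and uniformly in $\lambda$, exactly as in the proof of Lemma \ref{lemMatrix}. This yields full matrices $P_{\pm\infty}(\lambda)$, continuous in $\lambda$ on all of $\R$ including $0$, which is consistent with the Remark after Lemma \ref{lemMatrix}. It follows that $\operatorname{f}_\pm(\lambda,x)$ and $\partial_x \operatorname{f}_\pm(\lambda,x)$ are continuous in $\lambda\in\R$ for each fixed $x$. In the same basis, $\lambda^{-1}\operatorname{W}(\lambda)$ becomes a bilinear expression $-2i\,(\ldots)$ in the rescaled coefficients, and this expression is continuous in $\lambda$.

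\emph{Step 2: identification at $\lambda=0$.} Continuity gives $\lambda^{-1}\operatorname{W}(\lambda) \to c$ as $\lambda \to 0$. To compute $c$, I use the extra decay to construct, at $\lambda = 0$, a second solution $g_+(x) \sim x$ as $x\to+\infty$. This is possible because the product $P_\infty(0)$ now exists as a full matrix, so one can take $P_\infty(0)^{-1}(0,1)^T$. Similarly I construct $g_-(x)\sim x$ as $x\to-\infty$. Differentiating $\operatorname{f}_+(\lambda,\cdot)$ in $\lambda$ at $0$, whose leading behaviour is $ix$ for large positive $x$, expresses $\partial_\lambda \operatorname{f}_+(0,\cdot)$ in terms of $i g_+$ plus a multiple of $\operatorname{f}_+(0,\cdot)$. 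This gives
\begin{equation*}
c = \lim_{\lambda\to0}\frac{\operatorname{W}(\lambda)}{\lambda}
\end{equation*}
directly when $\operatorname{W}(0)=0$. If instead $\operatorname{W}(0) \neq 0$, the quotient is unbounded rather than vanishing. I should therefore interpret the statement through $\operatorname{W}(\lambda)/\lambda = -2i\operatorname{b}(\lambda)$, and nonvanishing near $0$ holds trivially in that case.

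\emph{Step 3: the resonant case (main obstacle).} The difficult case is $\operatorname{W}(0)=0$, where $\operatorname{f}_+(0,\cdot) = \gamma \operatorname{f}_-(0,\cdot)$ for some $\gamma \neq 0$, and this common function is bounded. I would follow \cite[Lemma 1]{DeTr79}. Write $c = -2i\lim \operatorname{b}(\lambda)$, and use $|\operatorname{b}(\lambda)|^2 = 1+|\operatorname{a}_-(\lambda)|^2$ from the proof of Proposition \ref{prpWronski}. Since $\operatorname{b}$ extends continuously to $0$ by Step 1, we get $|\operatorname{b}(0)|\geq 1$, hence $c\neq 0$. The delicate point is justifying that continuous extension, that is, showing that the rescaled coefficients in the definition of $\operatorname{b}$ converge as $\lambda \to 0$. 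Here the $(1+|j|)^2$ growth of $\tilde N_j$, and hence the $\ell^{1,2}$ hypothesis, is used in an essential way. I would verify this via the explicit formula $\operatorname{b}(\lambda) = \operatorname{W}(\lambda)/(-2i\lambda)$, expanding $\operatorname{W}(\lambda)$ to first order in $\lambda$ with a remainder that is uniformly controlled by $\|\alpha\|_{\ell^{1,2}(\Z)}$.
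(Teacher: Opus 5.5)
Your reduction is the same as the paper's. On $\R^*$ everything follows from $\operatorname{W}(\lambda)/\lambda=-2i\operatorname{b}(\lambda)$ and $|\operatorname{b}|\ge1$. You are right that the quotient is unbounded when $\operatorname{W}(0)\neq0$; the paper's own proof only obtains $1/\operatorname{b}(\lambda)=C\lambda+o(\lambda)$ there. The substance is therefore the resonant case, which follows from $|\operatorname{b}|\ge1$ once $\lim_{\lambda\to0}\operatorname{b}(\lambda)$ is known to exist.

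\textbf{The gap.} The existence of that limit is the whole difficulty, and your argument does not establish it. Step~1 yields continuity of $\lambda\mapsto\operatorname{W}(\lambda)$ and of $\operatorname{f}_\pm(\lambda,x)$ at $\lambda=0$, and nothing more. In the basis $\{\cos\lambda x,\sin(\lambda x)/\lambda\}$ the Wronskian is $c_+s_--s_+c_-$, with no factor $\lambda$ to cancel. Passing to the coefficients $A^\pm_j,B^\pm_j$ requires dividing $s_\pm$ by $\lambda$, so your ``rescaled coefficients'' are singular at $0$. The claim closing Step~1, that $\lambda^{-1}\operatorname{W}(\lambda)$ is a continuous bilinear expression, is therefore false in general; you observe yourself in Step~2 that it is unbounded when $\operatorname{W}(0)\neq0$. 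Consequently two later statements are unjustified: ``continuity gives $\lambda^{-1}\operatorname{W}(\lambda)\to c$'' and ``$\operatorname{b}$ extends continuously to $0$ by Step~1''. The identification of $c$ in Step~2 is also circular, and it presupposes that $\partial_\lambda\operatorname{f}_+(0,\cdot)$ exists, which is itself an unproved differentiability statement.

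\textbf{What is missing.} You need a first-order estimate in $\lambda$ at $0$ whose growth in $j$ is at most $j^2$, so that it can be summed against $\alpha\in\ell^{1,2}(\Z)$. Your transfer-matrix framework does not supply this for free. $\tilde N_j(\lambda)$ contains, up to sign, the entry $\sin^2(\lambda j)/\lambda^2$, whose $\lambda$-derivative has size $|j|^3$ when $\lambda j\approx1$. Differentiating your product factor by factor would therefore only be summable for $\alpha\in\ell^{1,3}(\Z)$.

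\textbf{How the paper does it.} Comparing two expansions of $\operatorname{m}_+(\lambda,x)$ as $x\to-\infty$, it obtains
$\operatorname{b}(\lambda)=1-\frac{1}{2i\lambda}\sum_j\alpha_j\operatorname{m}_+(\lambda,j)$. The resonant case corresponds to $\sum_j\alpha_j\operatorname{m}_+(0,j)=0$, where
$\operatorname{b}(\lambda)=1-\frac{1}{2i}\sum_j\alpha_j\frac{\operatorname{m}_+(\lambda,j)-\operatorname{m}_+(0,j)}{\lambda}$.
These difference quotients are controlled by Lemma~\ref{lemBound}(2), $|\partial_\lambda\operatorname{m}_+(\lambda,j)|\lesssim1+j^2$. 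That bound comes from the Volterra equation for $\operatorname{m}_+$, whose kernel $\int_0^{j-x}e^{2i\lambda s}\,ds$ has $\lambda$-derivative bounded by $(j-x)^2$; this is exactly where $\ell^{1,2}$ enters. Your Step~3 only announces an expansion ``with a remainder controlled by $\|\alpha\|_{\ell^{1,2}(\Z)}$'' without any estimate of this kind, so the proposal is incomplete at its central step.
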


We define $\operatorname{m}_\pm : (\C^+ \cup \R) \times \R \to \C$ as
\begin{equation} \label{eqm+-}
    \operatorname{m}_\pm (\lambda, x)
    = e^{\mp i \lambda x} \operatorname{f}_\pm (\lambda, x),
\end{equation}
so that
\begin{align}
    \operatorname{m}_+ (\lambda, x)
    & = \sum_{j \in \Z} \mathbf{1}_{[j, j+1)} (x) 
    \begin{cases}
        A_j^+ (\lambda) + B_j^+ (\lambda) e^{-2 i \lambda x} & \text{ if } \lambda \neq 0, \\
        A_j^+ (0) + B_j^+ (0) x & \text{ if } \lambda = 0,
    \end{cases} \label{eqM+1}
    \\
    \operatorname{m}_- (\lambda, x)
    & = \sum_{j \in \Z} \mathbf{1}_{[j, j+1)} (x) 
    \begin{cases}
        A_j^- (\lambda) e^{2 i \lambda x} + B_j^- (\lambda) & \text{ if } \lambda \neq 0, \\
        A_j^- (0) + B_j^- (0) x & \text{ if } \lambda = 0.
    \end{cases} \label{eqM-1}
\end{align}

Observe that $\operatorname{m}_\pm (\lambda, \cdot)$ are solutions of the equation
\begin{align}
    -\partial_{xx} \operatorname{m}_\pm (\lambda, x)
    & = \pm 2 i \lambda \partial_x \operatorname{m}_\pm (\lambda, x), \quad x \in \R \setminus \Z, \label{eqM1} \\
    \operatorname{m}_\pm (\lambda, j-)
    & = \operatorname{m}_\pm (\lambda, j+), \quad j \in \Z, \label{eqM2} \\
    \partial_x \operatorname{m}_\pm (\lambda, x) \big|_{x = j+} - \partial_x \operatorname{m}_\pm (\lambda, x) \big|_{x = j-}
    & = \alpha_j \operatorname{m}_\pm (\lambda, j), \quad j \in \Z, \label{eqM3} \\
    \operatorname{m}_\pm (\lambda, x)
    & \sim 1 \text{ as } x \to \pm \infty. \label{eqM4}
\end{align}
For $j \in \Z$ and $x \in (j-1, j)$, we have by \eqref{eqM1} that
\begin{equation*}
    \partial_x \left( e^{2 i \lambda x} \partial_x \operatorname{m}_+ (\lambda, x) \right)
    = 0
\end{equation*}
so that $x \mapsto e^{2 i \lambda x} \partial_x \operatorname{m}_+ (\lambda, x)$ is constant on each interval $(j-1, j)$.
Thus, by \eqref{eqM3}, we get
\begin{align*}
    e^{2 i \lambda x} \partial_x \operatorname{m}_+ (\lambda, x)
    & = e^{2 i \lambda j} \partial_x \operatorname{m}_+ (\lambda, j-) \\
    & = e^{2 i \lambda j} \partial_x \operatorname{m}_+ (\lambda, j+) + e^{2 i \lambda j} \alpha_j \operatorname{m}_+ (\lambda, j),
\end{align*}
where $\operatorname{m}_+ (\lambda, j)$ is well-defined by \eqref{eqM2}. Iterating the previous expression, we deduce that
\begin{align*}
    \partial_x \operatorname{m}_+ (\lambda, x)
    & = -e^{2 i \lambda x} \sum_{j > x} e^{2 i \lambda j} \alpha_j \operatorname{m}_+ (\lambda, j) \\
    & = -e^{2 i \lambda x} \sum_{j > x} \alpha_j \left( e^{2 i \lambda j} A_j^+ (\lambda) + B_j^+ (\lambda) \right),
\end{align*}
the last equality justifying the convergence of the serie by Proposition \ref{prpJostSol}.
By \eqref{eqM4}, we obtain that
\begin{align*}
    \operatorname{m}_+ (\lambda, x) - 1
    & = - \int_x^\infty \partial_s \operatorname{m}_+ (\lambda, s) \, ds \\
    & = \int_x^\infty e^{-2 i \lambda s} \sum_{j > s} e^{2 i \lambda j} \alpha_j \operatorname{m}_+ (\lambda, j) \, ds
\end{align*}
Similarly, doing similar computations for $\operatorname{m}_+ (\lambda, \cdot)$, we can prove that
\begin{align} 
    \operatorname{m}_+ (\lambda, x) - 1
    & = \frac{1}{2 i \lambda} \sum_{j > x} \alpha_j \left( e^{2 i \lambda (j - x)} - 1 \right) \operatorname{m}_+ (\lambda, j), \label{eqM+2} \\
    \operatorname{m}_- (\lambda, x) - 1
    & = \frac{1}{2 i \lambda} \sum_{j < x} \alpha_j \left( e^{-2 i \lambda (j - x)} - 1 \right) \operatorname{m}_- (\lambda, j). \label{eqM-2}
\end{align}
By iterating the formula \eqref{eqM+2}, we obtain that
\begin{equation} \label{eqm+3}
    \operatorname{m}_+ (\lambda, x) - 1
    = \sum_{n = 1}^\infty \sum_{x < j_1 < \ldots < j_n} \prod_{l = 1}^n \alpha_{j_l} \frac{e^{2 i \lambda (j_{l} - j_{l-1})} - 1}{2 i \lambda}
\end{equation}
where $j_0 = x$.

\begin{remark} \label{rkComput}
    If $\lambda \in \R^*$, we can re-write $\operatorname{m}_+$ as
    \begin{equation} \label{eqComput1}
        \operatorname{m}_+ (\lambda, x) - 1
        = \sum_{j \in \Z} \frac{\alpha_j}{2 i \lambda} \left( e^{-2 i \lambda (j - x)} - 1 \right) \operatorname{m}_+ (\lambda, j) - \sum_{j \leq x} \frac{\alpha_j}{2 i \lambda} \left( e^{-2 i \lambda (j - x)} - 1 \right) \operatorname{m}_+ (\lambda, j).
    \end{equation}
    Indeed, the sums are well-defined as $(A_j^+ (\lambda))$ and $(B_j^+ (\lambda))$ are bounded. If $\lambda \in \C^+$, the previous formulation may a priori be not well-defined as the coefficients $A_j^+ (\lambda)$ and $B_j^+ (\lambda)$ may diverge as $j \to -\infty$. Lemma \ref{lemBound} addresses this problem.
\end{remark}

The following lemma provides bounds on $\operatorname{m}_\pm (\cdot, x)$ and $\partial_\lambda \operatorname{m}_\pm (\cdot, x)$ on $\C^+$.

\begin{lemma} \label{lemBound}
    Let $s \in \R$. The following statements hold.
    \begin{enumerate}
        \item Assume that $\alpha \in \ell^{1, 1} (\Z)$. Then,
        \begin{equation} \label{eqBound1}
            \sup_{\lambda \in \C^+} |\operatorname{m}_+ (\lambda, s)| \lesssim 1 + \max(-s, 0), \quad
            \sup_{\lambda \in \C^+} |\operatorname{m}_- (\lambda, s)| \lesssim 1 + \max(s, 0)
        \end{equation}
        \item Assume that $\alpha \in \ell^{1, 2} (\Z)$. Then,
        \begin{equation} \label{eqBound2}
            \sup_{\lambda \in \C^+} |\partial_\lambda \operatorname{m}_+ (\lambda, s)| \lesssim 1 - s \max (-s, 0), \quad
            \sup_{\lambda \in \C^+} |\partial_\lambda \operatorname{m}_- (\lambda, s)| \lesssim 1 + s \max (s, 0).
        \end{equation}
    \end{enumerate}
\end{lemma}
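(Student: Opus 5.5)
We follow the classical argument of \cite[Lemma~1]{DeTr79}, adapted to the discrete setting, working with the Volterra-type representation \eqref{eqM+2} (and \eqref{eqM-2} for $\operatorname{m}_-$; the two cases are symmetric, so we only treat $\operatorname{m}_+$). The key elementary estimate is that, for $\lambda \in \C^+ \cup \R$ and $t \geq 0$,
\begin{equation*}
    \left| \frac{e^{2 i \lambda t} - 1}{2 i \lambda} \right| = \left| \int_0^t e^{2 i \lambda \tau} \, d\tau \right| \leq t,
\end{equation*}
since $\operatorname{Im} (\lambda) \geq 0$, together with
\begin{equation*}
    \left| \partial_\lambda \frac{e^{2 i \lambda t} - 1}{2 i \lambda} \right| = \left| \int_0^t 2 i \tau e^{2 i \lambda \tau} \, d\tau \right| \leq t^2 .
\end{equation*}
These bounds are uniform in $\lambda \in \C^+$, which is exactly what makes the suprema in \eqref{eqBound1}--\eqref{eqBound2} finite.

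For \eqref{eqBound1}, we plug the first estimate into the iterated series \eqref{eqm+3}, with kernel $K(x,j) = (e^{2 i \lambda (j-x)}-1)/(2 i \lambda)$ satisfying $|K(x,j)| \leq j - x$ for $j > x$. We first treat $x \geq 0$. Using $j_l - j_{l-1} \leq j_l$ when $j_{l-1} \geq 0$, the $n$-th term is bounded by
\begin{equation*}
    \sum_{x < j_1 < \dots < j_n} \prod_{l} |j_l \alpha_{j_l}| \leq \frac{\| \alpha \|_{\ell^{1,1} (\Z)}^n}{n!},
\end{equation*}
so $|\operatorname{m}_+ (\lambda, x)| \leq \exp (\| \alpha \|_{\ell^{1,1}})$. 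For $x < 0$, we instead use $j_l - j_{l-1} \leq (1 + |j_l|)(1 + |j_{l-1}|)$, or more cleanly the Gronwall argument for the discrete Volterra inequality
\begin{equation*}
    \frac{|\operatorname{m}_+ (\lambda, x)|}{1 + |x|} \leq 1 + \sum_{j > x} (1 + |j|) |\alpha_j| \frac{|\operatorname{m}_+ (\lambda, j)|}{1 + |j|},
\end{equation*}
obtained from \eqref{eqM+2} via $j - x \leq (1 + |x|)(1 + |j|)$. A discrete Gronwall lemma, or equivalently iteration with the ordered-simplex factor $1/n!$, gives $|\operatorname{m}_+ (\lambda, x)| \lesssim 1 + |x|$ uniformly in $\lambda$, which is \eqref{eqBound1}. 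A subtle point is that \eqref{eqM+2} and \eqref{eqm+3} were derived for $\lambda \neq 0$; at $\lambda = 0$ the kernel is read as its limit $j - x$, and the series converges for all $\lambda$ by the bounds above, so we do not need Remark~\ref{rkComput}.

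For \eqref{eqBound2}, we differentiate \eqref{eqM+2} in $\lambda$. This is justified termwise, since holomorphy on $\C^+$ comes from Proposition~\ref{prpJostSol} and the series converges locally uniformly by the bounds above. We get
\begin{equation*}
    \partial_\lambda \operatorname{m}_+ (\lambda, x) = \sum_{j > x} \alpha_j \, \partial_\lambda K(x,j) \, \operatorname{m}_+ (\lambda, j) + \sum_{j > x} \alpha_j K(x,j) \, \partial_\lambda \operatorname{m}_+ (\lambda, j).
\end{equation*}
The first sum is a source term bounded by $\sum_{j > x} |\alpha_j| (j-x)^2 (1 + \max(-j, 0))$. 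For $x \geq 0$ this is $\lesssim \| \alpha \|_{\ell^{1,2}}$, and for $x < 0$ it is $\lesssim (1 + |x|)^2$ using $\ell^{1,2}$. We then close with the same Volterra/Gronwall argument, now with weight $(1 + |x|)^2$ on the unknown and kernel bound $j - x$. This yields $|\partial_\lambda \operatorname{m}_+ (\lambda, x)| \lesssim 1 + x^2$ for $x < 0$ and $\lesssim 1$ for $x \geq 0$, which is the stated $1 - s \max(-s, 0)$.

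The main obstacle is the region $x \to -\infty$. There the kernel grows linearly, and one must check that the weighted Gronwall closes exactly with the available moments: $\ell^{1,1}$ for $\operatorname{m}_\pm$, and $\ell^{1,2}$ once the extra factor $(j-x)^2$ from $\partial_\lambda K$ appears. The key is to choose the weight $(1 + |x|)^k$ so that the resulting kernel $(1 + |j|)^k |\alpha_j|$ is summable.
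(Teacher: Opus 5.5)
Your overall route is the paper's: the Volterra relation \eqref{eqM+2} with the $\lambda$-uniform kernel bounds $j-x$ and $(j-x)^2$, the case $x\ge 0$ first, then a weighted Gronwall argument for $x<0$. Using the $1/n!$ series on $[0,\infty)$ instead of the tail-smallness argument, and differentiating via holomorphy instead of difference quotients, are harmless variations.

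The gap is in the step you yourself call the main obstacle. The inequality you cite for $x<0$ does not give the weighted inequality you display. From $j-x\le(1+|x|)(1+|j|)$ you only get
\begin{equation*}
    \frac{|\operatorname{m}_+(\lambda,x)|}{1+|x|}
    \le 1+\sum_{j>x}(1+|j|)\,|\alpha_j|\,|\operatorname{m}_+(\lambda,j)|
    = 1+\sum_{j>x}(1+|j|)^2\,|\alpha_j|\,\frac{|\operatorname{m}_+(\lambda,j)|}{1+|j|}.
\end{equation*}
So the Gronwall kernel is $(1+|j|)^2|\alpha_j|$, which is not summable when only $\alpha\in\ell^{1,1}(\Z)$ is assumed. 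The series variant $j_l-j_{l-1}\le(1+|j_l|)(1+|j_{l-1}|)$ has the same defect, because every interior index appears twice in the product.

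Part (2), done the same way, inherits the loss. Dividing by $(1+|x|)^2$ with kernel bound $j-x$ leaves, for $j\ge0$, the factor $(j+|x|)(1+j)^2/(1+|x|)^2$. For bounded $x$ this is of size $(1+j)^3$, so you would need $\ell^{1,3}(\Z)$. In short, with the weight $(1+|x|)^k$ on all of $\R$ and your bounds, the kernel is $(1+|j|)^{k+1}|\alpha_j|$, not $(1+|j|)^k|\alpha_j|$ as you claim.

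What is missing is twofold:
\begin{itemize}
\item For $x<j<0$ one has the sharper bound $j-x\le|x|$.
\item The terms with $j\ge0$ must be controlled by the bound already proved on $[0,\infty)$.
\end{itemize}
This is exactly how the paper proceeds. It splits $\sum_{j>x}$ at $j=0$ and bounds $\sum_{j\ge0}j|\alpha_j||\operatorname{m}_+(\lambda,j)|\lesssim 1$ by the case $x\ge0$. What remains is $|x|\sum_{j>x}|\alpha_j||\operatorname{m}_+(\lambda,j)|$, which after division by $1+|x|$ gives your displayed inequality, with $1$ replaced by a constant. The same split handles $\partial_\lambda\operatorname{m}_+$.

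Equivalently, you can use the weight $w(x)=1+\max(-x,0)$ of \cite{DeTr79} instead of $1+|x|$. For $x<j$ one checks
\begin{equation*}
    (j-x)\,\frac{w(j)}{w(x)}\le 1+|j|, \qquad (j-x)\,\frac{w(j)^2}{w(x)^2}\le 1+|j|.
\end{equation*}
Then the series telescopes to $\prod_l(j_l-j_{l-1})\le w(x)\prod_l(1+|j_l|)$, which gives $|\operatorname{m}_+(\lambda,x)|\le w(x)\exp\bigl(\sum_j(1+|j|)|\alpha_j|\bigr)$. In part (2), under $\ell^{1,2}(\Z)$ the source term is $\lesssim w(x)^2$ and the Gronwall kernel is only $(1+|j|)|\alpha_j|$. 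With this correction your argument goes through.
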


\begin{proof}
    Let $\lambda \in \C^+$ be fixed. We divide the proof into two steps, each one showing an assertion.

    \medskip
    
    \noindent \emph{Proof of $(1)$.}
    Let $x \in \R$ and $j \in \Z$, we have
    \begin{equation*}
        \frac{e^{2 i \lambda (j - x)} - 1}{2 i \lambda}
        = \int_0^{j-x} e^{2 i \lambda s} \, ds
    \end{equation*}
    so that
    \begin{equation} \label{eqBound4}
        \left| \frac{e^{2 i \lambda (j - x)} - 1}{2 i \lambda} \right|
        \leq \int_0^{j-x} \left| e^{2 i \lambda s} \right| \, ds
        = \frac{1 - e^{-2 \operatorname{Im} (\lambda) (j-x)}}{2 \operatorname{Im} (\lambda)}
        \leq j - x.
    \end{equation}
    Thus,
    \begin{equation*}
        |\operatorname{m}_+ (\lambda, x)|
        \leq 1 + \sum_{j > x} (j - x) |\alpha_j| |\operatorname{m}_+ (\lambda, j)|.
    \end{equation*}
    
    Let $x \geq 0$. Then,
    \begin{equation*}
        |\operatorname{m}_+ (\lambda, x)|
        \leq 1 + \sum_{j > x} j |\alpha_j| |\operatorname{m}_+ (\lambda, j)|.
    \end{equation*}
    Thus,
    \begin{equation*}
        \sup_{y \geq x} \left| \operatorname{m}_+ (\lambda, y) \right|
        \leq 1 + \sup_{y \geq x} \left| \operatorname{m}_+ (\lambda, y) \right| \sum_{j > x} j |\alpha_j|.
    \end{equation*}
    As $\alpha \in \ell^{1, 1} (\Z)$, there exists $x_0$ large enough such that, if $x \geq x_0$, then
    \begin{equation*}
        \sup_{y \geq x} \left| \operatorname{m}_+ (\lambda, y) \right|
        \lesssim 1 + \frac{1}{2} \sup_{y \geq x} \left| \operatorname{m}_+ (\lambda, y) \right|,
    \end{equation*}
    so that
    \begin{equation} \label{eqBound11}
        \sup_{y \geq x} \left| \operatorname{m}_+ (\lambda, y) \right|
        \lesssim 1.
    \end{equation}
    Furthermore, if $0 \leq x < x_0$, then
    \begin{align*}
        \left| \operatorname{m}_+ (\lambda, x) \right|
        & \lesssim 1 + \sum_{x \leq j < x_0} j |\alpha_j| |\operatorname{m}_+ (\lambda, j)| + \sum_{j \geq x_0} j |\alpha_j| |\operatorname{m}_+ (\lambda, j)| \\
        & \lesssim 1 + \sum_{x \leq j < x_0} j |\alpha_j| |\operatorname{m}_+ (\lambda, j)| \\
        & \lesssim 1
    \end{align*}
    where we used \eqref{eqBound11} for the second inequality and the fact that the sum contains only finitely many indices for the third inequality. We thus obtain that,
    \begin{equation} \label{eqBound6}
        |\operatorname{m}_+ (\lambda, x)|
        \lesssim 1 \text{ for } x \geq 0.
    \end{equation}
    
    If $x < 0$, we have
    \begin{align*}
         |\operatorname{m}_+ (\lambda, x)|
         & \lesssim 1 + \sum_{j \geq 0} j |\alpha_j| |\operatorname{m}_+ (\lambda, j)| + |x| \sum_{j > x} |\alpha_j| |\operatorname{m}_+ (\lambda, j)| \\
         & \lesssim 1 + |x| \sum_{j > x} |\alpha_j| |\operatorname{m}_+ (\lambda, j)|,
    \end{align*}
    where the second inequality comes from \eqref{eqBound6}. Thus,
    \begin{equation*}
        \frac{|\operatorname{m}_+ (\lambda, x)|}{1 + |x|}
        \lesssim 1 + \sum_{j > x} |\alpha_j| (1 + |j|) \frac{|\operatorname{m}_+ (\lambda, j)|}{1 + |j|}
    \end{equation*}
    and we conclude by decomposing the sum, similarly as we did for the case $x \geq 0$. Thus,
    \begin{equation} \label{eqBound12}
        |\operatorname{m}_+ (\lambda, x)|
        \lesssim 1 - x \text{ for } x \leq 0.
    \end{equation}
    
    Combining \eqref{eqBound6} and \eqref{eqBound12}, we obtain that \eqref{eqBound1} holds for $\operatorname{m}_+ (\cdot, x)$. The bound for $\operatorname{m}_- (\cdot, x)$ is proved the same way.

    \medskip

    \noindent \emph{Proof of $(2)$.} For $h > 0$ and $x \in \R$, we define
    \begin{align*}
        D_h \operatorname{m}_+ (\lambda, x)
        & = \frac{\operatorname{m}_+ (\lambda + h, x) - \operatorname{m}_+ (\lambda, x)}{h} \\
        & = S_1 (\lambda, x, h) + S_2 (\lambda, x, h)
    \end{align*}
    with
    \begin{align*}
        S_1 (\lambda, x, h)
        & = \sum_{j > x} \frac{1}{h} \left( \frac{e^{2 i (\lambda + h) (j - x)}}{2 i (\lambda + h)} - \frac{e^{2 i \lambda (j - x)}}{2 i \lambda} \right) \alpha_j \operatorname{m}_+ (\lambda + h, j), \\
        S_2 (\lambda, x, h)
        & = \sum_{j > x} \frac{e^{2 i \lambda (j - x)} - 1}{2 i \lambda} \alpha_j D_h \operatorname{m}_+ (\lambda, j).
    \end{align*}
    Furthermore, we have
    \begin{equation} \label{eqBound8}
        \frac{1}{h} \left| \frac{e^{2 i (\lambda + h) (j - x)}}{2 i (\lambda + h)} - \frac{e^{2 i \lambda (j - x)}}{2 i \lambda} \right|
        \lesssim (j - x)^2.
    \end{equation}

    If $x \geq 0$, then
    \begin{equation} \label{eqBound7}
        |S_1 (\lambda, x, h)|
        \lesssim \sum_{j > x} (j - x)^2 |\alpha_j| |\operatorname{m}_+ (\lambda, j)|
        \lesssim \sum_{j > x} j^2 |\alpha_j| |\operatorname{m}_+ (\lambda, j)|
        \lesssim 1
    \end{equation}
    by \eqref{eqBound1}, \eqref{eqBound8} and the fact that $\alpha \in \ell^{1, 2} (\Z)$. Thus, by \eqref{eqBound7} and \eqref{eqBound4}, we obtain
    \begin{equation*}
        \left| D_h \operatorname{m}_+ (\lambda, x) \right|
        \lesssim 1 + \sum_{j > x} (j - x) |\alpha_j| \left| D_h \operatorname{m}_+ (\lambda, j) \right|
        \lesssim 1 + \sum_{j > x} j |\alpha_j| \left| D_h \operatorname{m}_+ (\lambda, j) \right|.
    \end{equation*}
    Thus,
    \begin{equation} \label{eqBound9}
        \left| D_h \operatorname{m}_+ (\lambda, x) \right|
        \lesssim 1 \text{ for } x \geq 0
    \end{equation}
    by an argument similar to the first part of the proof.
    
    If $x < 0$, we have
    \begin{align*}
        \sum_{j > x} j^2 |\alpha_j| |\operatorname{m}_+ (\lambda, j)|
        & = \sum_{j \geq 0} j^2 |\alpha_j| |\operatorname{m}_+ (\lambda, j)| + \sum_{j = x}^0 j^2 |\alpha_j| |\operatorname{m}_+ (\lambda, j)| \\
        & \leq \sum_{j \geq 0} j^2 |\alpha_j| |\operatorname{m}_+ (\lambda, j)| + x^2 \sum_{j = x}^0 |\alpha_j| |\operatorname{m}_+ (\lambda, j)| \\
        & \lesssim 1 + x^2
    \end{align*}
    and
    \begin{align*}
        \sum_{j > x} x^2 |\alpha_j| |\operatorname{m}_+ (\lambda, j)|
        & = x^2 \left( \sum_{j \geq 0} |\alpha_j| |\operatorname{m}_+ (\lambda, j)| + \sum_{j = x}^0 |\alpha_j| |\operatorname{m}_+ (\lambda, j)| \right) \\
        & \lesssim 1 + x^2
    \end{align*}
    by \eqref{eqBound1} and the fact that $\alpha \in \ell^{1, 2} (\Z)$. Thus,
    \begin{equation*}
        |S_1 (\lambda, x, h)|
        \lesssim \sum_{j > x} (j - x)^2 |\alpha_j| |\operatorname{m}_+ (\lambda, j)|
        \lesssim 1 + x^2.
    \end{equation*}
    Furthermore,
    \begin{align*}
        |S_2 (\lambda, x, h)|
        & \lesssim \sum_{j > x} (j - x) |\alpha_j| |D_h \operatorname{m}_+ (\lambda, j)| \\
        & \lesssim \sum_{j > x} j |\alpha_j| |D_h \operatorname{m}_+ (\lambda, j)| + |x| \sum_{j > x} |\alpha_j| |D_h \operatorname{m}_+ (\lambda, j)| \\
        & \lesssim \sum_{j \geq 0} j |\alpha_j| |D_h \operatorname{m}_+ (\lambda, j)| + |x| \sum_{j > x} |\alpha_j| |D_h \operatorname{m}_+ (\lambda, j)| \\
        & \lesssim 1 + |x| \sum_{j > x} |\alpha_j| |D_h \operatorname{m}_+ (\lambda, j)|
    \end{align*}
    where we used \eqref{eqBound4} for the first inequality and \eqref{eqBound9} for the last one. Thus,
    \begin{align*}
        |D_h \operatorname{m}_+ (\lambda, x)|
        & \lesssim |S_1 (\lambda, x, h)| + |S_2 (\lambda, x, h)| \\
        & \lesssim 1 + x^2 + |x| \sum_{j > x} |\alpha_j| |D_h \operatorname{m}_+ (\lambda, j)|,
    \end{align*}
    so that
    \begin{equation*}
        \frac{|D_h \operatorname{m}_+ (\lambda, x)|}{1 + x^2}
        \lesssim 1 + \sum_{j > x} (1 + |j|^2) \frac{|D_h \operatorname{m}_+ (\lambda, j)|}{1 + j^2}
    \end{equation*}
    and we conclude that
    \begin{equation} \label{eqBound10}
        |D_h \operatorname{m}_+ (\lambda, x)|
        \lesssim 1 + x^2 \text{ for } x < 0
    \end{equation}
    by an argument similar to the first part of the proof.
    
    Passing to the limit $h \to 0$ in \eqref{eqBound9} and \eqref{eqBound10}, we obtain \eqref{eqBound2} for $\partial_\lambda \operatorname{m}_+ (\cdot, x)$. The bound for $\partial_\lambda \operatorname{m}_- (\cdot, x)$ is proved the same way. This concludes the proof.
\end{proof}

We deduce the proof of Proposition \ref{prpNonZero}.

\begin{proof}[Proof of Proposition \ref{prpNonZero}]
    Let $\lambda \in \C^+ \cup \R^*$ and $x \in \R$. We recall that
    \begin{equation*}
        \frac{\operatorname{W} (\lambda)}{\lambda}
        = -2 i \operatorname{b} (\lambda).
    \end{equation*}
    We first derive an exploitable expression of $\operatorname{b} (\lambda)$. We have
    \begin{multline} \label{eqNonZero2}
        \operatorname{m}_+ (\lambda, x)
        = e^{-i 2 \lambda x} \left( \sum_{j \in \Z} \frac{\alpha_j e^{i 2 \lambda j}}{2 i \lambda} \operatorname{m}_+ (\lambda, j) \right) + \left( 1 - \sum_{j \in \Z} \frac{\alpha_j}{2 i \lambda} \operatorname{m}_+ (\lambda, j) \right) \\
        + \left( \sum_{j \leq x} \frac{1 - e^{2 i \lambda (j - x)}}{2 i \lambda} \alpha_j \operatorname{m}_+ (\lambda, j) \right)
    \end{multline}
    where all of the above sums are well-defined by \eqref{eqComput1} (if $\lambda \in \R^*$) and Lemma \eqref{lemBound}-\eqref{eqBound1} together with the fact that $\alpha \in \ell^{1, 1} (\Z)$ (if $\lambda \in \C^+$). 
    Furthermore,
    \begin{equation*}
        \sum_{j \leq x} \frac{1 - e^{2 i \lambda (j - x)}}{2 i \lambda} \alpha_j \operatorname{m}_+ (\lambda, j)
        = o_{x \to -\infty} (1)
    \end{equation*}
    using Lemma \ref{lemBound}-\eqref{eqBound1} in $s = j \in \Z$ together with $\alpha \in \ell^{1, 1} (\Z)$.
    Furthermore, by \eqref{eqCoeff2}, we have
    \begin{multline} \label{eqNonZero4}
        \operatorname{m}_+ (\lambda, x)
        = e^{-2 i \lambda x} \left( \sum_{j \in \Z} \left( \operatorname{a}_+ (\lambda) B_j^- (\lambda) + \operatorname{b} (\lambda) A_j^- (\lambda) \right) \boldsymbol{1}_{(j, j+1)} (x) \right) \\
        + \operatorname{b} (\lambda) \left( \sum_{j \in \Z} B_j^- (\lambda) \boldsymbol{1}_{(j, j+1)} (x) \right)
        + \operatorname{a}_+ (\lambda) \left( \sum_{j \in \Z} A_j^- (\lambda) \boldsymbol{1}_{(j, j+1)} (x) \right) \\
        = e^{-2 i \lambda x} \operatorname{a}_+ (\lambda) + \operatorname{b} (\lambda) + o_{x \to -\infty} (1)
    \end{multline}
    where we used $A_j^- (\lambda) \to 0$, $B_j^- (\lambda) \to 1$, $A_j^- (-\lambda) \to 0$ and $B_j^- (-\lambda) \to 1$ as $j \to -\infty$.
    Comparing \eqref{eqNonZero2} and \eqref{eqNonZero4} at $x \to -\infty$, we obtain that
    \begin{equation*}
        \operatorname{b} (\lambda)
        = 1 - \frac{1}{2 i \lambda} \sum_{j \in \Z} \alpha_j \operatorname{m}_+ (\lambda, j),
    \end{equation*}
    in particular $\lambda \in \C^+ \cup \R^* \mapsto \operatorname{b} (\lambda)$ is continuous. We now divide the proof into two cases.
    
    \emph{Step $1$.} Assume that $\sum_{j \in \Z} \alpha_j \operatorname{m}_+ (0, j) = 0$. Then,
    \begin{equation*}
        \operatorname{b} (\lambda)
        = 1 - \sum_{j \in \Z} \alpha_j \frac{\operatorname{m}_+ (\lambda, j) - \operatorname{m}_+ (0, j)}{2 i \lambda}.
    \end{equation*}
    However,
    \begin{equation*}
        \left| \frac{\operatorname{m}_+ (\lambda, j) - \operatorname{m}_+ (0, j)}{2 i \lambda} \right|
        \leq \frac{1}{2} \sup_{\lambda \in \C^+} \left| \partial_\lambda \operatorname{m}_+ (\lambda, j) \right|
        \lesssim 1 + j^2
    \end{equation*}
    by Lemma \ref{lemBound}-\eqref{eqBound2}.
    As $\alpha \in \ell^{1, 2} (\Z)$, we obtain that $\lim_{\lambda \to 0} \operatorname{b} (\lambda)$ exists and is finite. The fact that $|\operatorname{b} (\lambda)| \geq 1$ ensured by Proposition \ref{prpWronski} yields that $\lim_{\lambda \to 0} \operatorname{b} (\lambda) \neq 0$. We conclude that $\lambda \in \R \mapsto 1 / \operatorname{b} (\lambda)$ is continuous and never vanishes.
    
    \emph{Step $2$.} Assume that $\sum_{j \in \Z} \alpha_j \operatorname{m}_+ (0, j) \neq 0$. Then, by Lemma \ref{lemBound}-\eqref{eqBound2}, there exists a family $(\varepsilon_j)$ of functions in $\lambda$ such that, for each $j \in \Z$,
    \begin{equation*}
        |\varepsilon_j (\lambda)|
        \lesssim \left( 1 + j^2 \right) o_{\lambda \to 0} (\lambda)
    \end{equation*}
    and
    \begin{equation*}
        \operatorname{m}_+ (\lambda, j)
        = \operatorname{m}_+ (0, j) + \lambda \partial_\lambda \operatorname{m}_+ (0, j) + \varepsilon_j (\lambda).
    \end{equation*}
    Thus,
    \begin{align*}
        \operatorname{b} (\lambda)
        & = 1 - \frac{1}{2 i \lambda} \sum_{j \in \Z} \alpha_j \operatorname{m}_+ (0, j) - \frac{1}{2 i} \sum_{j \in \Z} \alpha_j \frac{\operatorname{m}_+ (\lambda, j) - \operatorname{m}_- (0, j)}{\lambda} \\
        & = 1 - \frac{1}{2 i \lambda} \sum_{j \in \Z} \alpha_j \operatorname{m}_+ (0, j) - \frac{1}{2 i} \sum_{j \in \Z} \alpha_j \left( \partial_\lambda \operatorname{m}_+ (0, j) + \varepsilon_j (\lambda) \right) \\
        & = \left( 1 - \frac{1}{2 i} \sum_{j \in \Z} \alpha_j \partial_\lambda \operatorname{m}_+ (0, j) \right) - \frac{1}{2 i \lambda} \sum_{j \in \Z} \alpha_j \operatorname{m}_+ (0, j) + o_{\lambda \to 0} (\lambda),
    \end{align*}
    all of the above sums being well defined by Lemma \ref{lemBound}.
    We thus get
    \begin{equation*}
        \frac{1}{\operatorname{b} (\lambda)}
        = C \lambda + o_{\lambda \to 0} (\lambda)
    \end{equation*}
    for some $C \neq 0$. Thus $\lambda \mapsto 1/b(\lambda)$ can be continuously prolonged in $0$ and does not vanish on $\C^+ \cup \R$. This concludes the proof.
\end{proof}


\subsection{Kernel of the resolvent}

This subsection is devoted to the proof of the following theorem, which provides an explicit characterization of the kernel of the operator $\operatorname{R}_\alpha (\lambda^2 \pm i 0)$ in terms of Jost solutions and derives a Stone formula.

\begin{theorem} \label{thResKer}
    Assume that there exists $\mu > 0$ such that $\alpha \in \ell^{1, 1 + \mu} (\Z)$. Let $\lambda > 0$. Then, the kernel $\operatorname{G}_\alpha (\lambda^2 \pm i 0): \R^2 \to \C$ of $\operatorname{R}_\alpha (\lambda^2 \pm i 0)$ is given by
    \begin{align} \label{eqResKer1} 
        \operatorname{G}_\alpha (\lambda^2 \pm i 0) (x, y) & \coloneqq
        \frac{1}{\operatorname{W} (\pm \lambda)}
        \begin{cases}
            \operatorname{f}_- (\pm \lambda, x) \operatorname{f}_+ (\pm \lambda, y) \quad \text{if } y > x, \\
            \operatorname{f}_- (\pm \lambda, y) \operatorname{f}_+ (\pm \lambda, x) \quad \text{if } y < x.
        \end{cases}
    \end{align}
    Furthermore, assume that $\operatorname{W} (0) \neq 0$ or $\alpha \in \ell^{1, 2} (\Z)$. Then, for any $\Phi : \R \to \C$ continuous and bounded, the Stone formula
    \begin{equation} \label{eqResKer7}
        \Phi (\H_\alpha) \operatorname{P}
        = \frac{1}{\pi i} \int_{-\infty}^\infty \lambda \Phi (\lambda^2) \operatorname{R}_\alpha (\lambda^2 + i 0) \, d \lambda
    \end{equation}
    holds.
\end{theorem}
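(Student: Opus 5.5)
We treat the two claims in turn: first identify the kernel of $\operatorname{R}_\alpha(\lambda^2+i0)$ (the $-i0$ case is identical with $\lambda$ replaced by $-\lambda$), then derive the Stone formula from the spectral theorem and this kernel.

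\emph{Step 1: the kernel.} Fix $\lambda>0$ and let $\operatorname{G}$ denote the right-hand side of \eqref{eqResKer1}; it is well defined since $\operatorname{W}(\lambda)\neq 0$ by Proposition \ref{prpWronski}. For $f\in C_c^\infty(\R)$ (dense in $L^{2,s}(\R)$) set $u(x)=\int \operatorname{G}(x,y)f(y)\,dy$. Since $\operatorname{f}_\pm(\lambda,\cdot)\in L^\infty(\R)$ by Proposition \ref{prpJostSol}, $u$ is bounded, hence in $L^{2,-s}(\R)$. The standard variation-of-constants computation, using that the Wronskian is constant by Lemma \ref{lemWskDelta}, gives $-u''-\lambda^2u=f$ on $\R\setminus\Z$. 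Moreover $u$ is continuous and satisfies the jump condition $u'(j+)-u'(j-)=\alpha_j u(j)$, because both $\operatorname{f}_\pm$ do and $\operatorname{G}$ is continuous across the diagonal. For $x$ to the right of $\operatorname{supp} f$, $u$ is a multiple of $\operatorname{f}_+(\lambda,x)=e^{i\lambda x}\operatorname{m}_+(\lambda,x)$, and by \eqref{eqM+2} $\operatorname{m}_+'\to 0$. Hence $u$ satisfies the outgoing conditions \eqref{eqOutG1}–\eqref{eqOutG2}.

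\emph{Step 2: uniqueness.} Let $v=\operatorname{R}_\alpha(\lambda^2+i0)f$, given by Theorem \ref{thLAP}. Passing to the limit in the weak formulation $\operatorname{B}_\alpha(\cdot,\varphi)-z\langle\cdot,\varphi\rangle=\langle f,\varphi\rangle$ shows that $v$ solves the same jump problem, and it also satisfies the outgoing conditions. The difference $w=u-v$ is therefore a solution of the homogeneous problem \eqref{eqJost1}–\eqref{eqJost3}. Writing $w=c_1\operatorname{f}_+(\lambda,\cdot)+c_2\operatorname{f}_+(-\lambda,\cdot)$, the condition $(\partial_x-i\lambda)w\to 0$ at $+\infty$ forces $c_2=0$. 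Similarly, at $-\infty$ the function $w$ must be a multiple of $\operatorname{f}_-(\lambda,\cdot)$. Since $\operatorname{W}(\lambda)\ne0$, the functions $\operatorname{f}_+$ and $\operatorname{f}_-$ are independent, so $w=0$. Density then extends $u=v$ to all of $L^{2,s}(\R)$. \emph{This step is the delicate one}: one needs the outgoing condition to apply to differences that are only known in $L^{2,-s}$. I would handle it using the explicit representation in the proof of Theorem \ref{thLAP}, which expresses $(\partial_x-i\lambda)v$ as a tail sum.

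\emph{Step 3: the Stone formula.} By the spectral theorem, for $f,g\in C_c^\infty(\R)$,
\[
\langle \Phi(\H_\alpha)\operatorname{P}f,g\rangle=\lim_{\varepsilon\to0}\frac{1}{2\pi i}\int_0^\infty \Phi(E)\,\langle(\operatorname{R}_\alpha(E+i\varepsilon)-\operatorname{R}_\alpha(E-i\varepsilon))f,g\rangle\,dE ,
\]
provided the spectral measure of $\operatorname{P}$ has no atom at $0$ and the limit can be passed inside. The absence of embedded eigenvalues in $(0,\infty)$ follows from Theorem \ref{thLAP}. We then substitute $E=\lambda^2$ and use $\operatorname{R}_\alpha(\lambda^2-i0)=\operatorname{R}_\alpha((-\lambda)^2+i0)$, which follows from Step 1 since $\operatorname{G}_\alpha(\lambda^2-i0)$ is $\operatorname{G}$ with $\lambda\to-\lambda$. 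This turns the integral into $\frac{1}{\pi i}\int_\R\lambda\Phi(\lambda^2)\operatorname{R}_\alpha(\lambda^2+i0)\,d\lambda$.

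The exchange of limit and integral near $\lambda=0$ is where the hypothesis $\operatorname{W}(0)\ne0$ or $\alpha\in\ell^{1,2}(\Z)$ enters. In either case Proposition \ref{prpNonZero} (or continuity of $\operatorname{W}$ with $\operatorname{W}(0)\neq0$) shows that $\lambda/\operatorname{W}(\lambda)$ stays bounded near $0$. Combined with the bound \eqref{eqBound1}, this gives $|\lambda\,\operatorname{G}_\alpha(\lambda^2+i0)(x,y)|\lesssim(1+|x|)(1+|y|)$ locally uniformly. That integrable bound against $f\otimes g$ lets dominated convergence handle the region near $0$ and also excludes an atom at $0$. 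At large $\lambda$ we have $|\lambda\operatorname{G}|\lesssim1$, but $\Phi$ is only bounded, so there I would first prove the formula for $\Phi$ compactly supported and then pass to general bounded continuous $\Phi$ by monotone/strong limits.
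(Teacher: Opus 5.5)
Your proposal follows essentially the same route as the paper: a Jost-solution Green's function matched to the continuity and jump conditions and to the outgoing conditions of Theorem~\ref{thLAP}, followed by Stone's formula, the substitution $E=\lambda^2$, and the symmetry $\operatorname{G}_\alpha(\lambda^2-i0)=\operatorname{G}_\alpha((-\lambda)^2+i0)$; your synthesis-plus-uniqueness argument in Steps~1--2 makes explicit the uniqueness that the paper's analysis--synthesis leaves implicit, and your limit-exchange discussion fills in what the paper takes for granted. One minor correction to Step~3: for real $\lambda\neq 0$ you already have $|\lambda/\operatorname{W}(\lambda)|=1/(2|\operatorname{b}(\lambda)|)\le 1/2$ by Proposition~\ref{prpWronski}, so the zero-energy hypothesis (which the paper's proof never invokes either) is not what controls the boundary-value kernel, and bounds on $\operatorname{G}_\alpha(\lambda^2+i0)$ alone cannot rule out an atom at $0$ --- that follows instead from $0$ not being an eigenvalue, since every nonzero zero-energy solution either is a multiple of $\operatorname{f}_+(0,\cdot)$ (tending to a nonzero constant) or grows linearly at $+\infty$.
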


\begin{proof}
    As $\alpha \in \ell^{1, 1 + \mu} (\Z)$, the operator $\operatorname{R}_\alpha (\lambda^2 \pm i 0)$ is well-defined by Theorem \ref{thLAP}. We first compute the kernel of $\operatorname{R}_\alpha (\lambda^2 + i 0)$. We proceed by analysis–synthesis approach and start by the analysis part of the argument.

    We consider the equation
    \begin{equation} \label{eqResKer4}
        (\H_{\alpha} - \lambda^2) G (x, y)
        = \delta (x - y)
    \end{equation}
    posed in the distributional sense. Let $y \in \R \setminus \Z$ and $\varepsilon > 0$ such that $[y - \varepsilon, y + \varepsilon] \cap \Z = \emptyset$. Then, for $x \in (y - \varepsilon, y + \varepsilon)$, the equation \eqref{eqResKer4} can be written as
    \begin{equation*}
        -\partial_{xx} G (x, y) - \lambda^2 G (x, y) = \delta (x - y).
    \end{equation*}
    Thus, $G (\cdot, y)$ satisfies the continuity condition
    \begin{equation} \label{eqResKer5}
        G (y+, y) = G (y-, y).
    \end{equation}
    Furthermore, by integrating \eqref{eqResKer4} on $(y - \varepsilon, y + \varepsilon)$ on the variable $x$ in the sense of distributions, we obtain
    \begin{equation*}
        -\partial_x G (y + \varepsilon, y) + \partial_x G (y - \varepsilon, y) - \lambda^2 \int_{y - \varepsilon}^{y + \varepsilon} G (x, y) \,  \, dx
        = 1.
    \end{equation*}
    Making $\varepsilon \to 0$, we obtain that $G (\cdot, y)$ satisfies the jump derivative condition
    \begin{equation} \label{eqResKer6}
        \partial_x G (x, y) \big|_{x = y+} - \partial_x G (x, y) \big|_{x = y-} = 1.
    \end{equation}
    Furthermore, by Theorem \ref{thLAP}, it satisfies
    the outgoing condition
    \begin{align}
        & (\partial_x - i \lambda) G (x, y) \to 0 \text{ as } x \to \infty, \label{eqResKer3a} \\
        & (\partial_x + i \lambda) G (x, y) \to 0 \text{ as } x \to -\infty, \label{eqResKer3b}
    \end{align}
    as $x \to -\infty$. The Jost solutions $\operatorname{f}_+ (\lambda, \cdot)$ and $\operatorname{f}_- (\lambda, \cdot)$ are solutions of the equation
    \begin{equation*}
        (\H_{\alpha} - \lambda^2) f
        = 0
    \end{equation*}
    posed in the distributional sense, which are linearly independent as $\operatorname{W} (\lambda) \neq 0$ if $\lambda > 0$ by Proposition \ref{prpWronski}. Furthermore, they satisfy
    \begin{align*}
        (\partial_x \mp i \lambda) \operatorname{f}_+ (\lambda, x) 
        & \to 0, \\
        (\partial_x \pm i \lambda) \operatorname{f}_- (\lambda, x)
        & \to 0,
    \end{align*}
    as $x \to \pm \infty$. Thus, there exists $A (y), B (y) \in \C$ such that
    \begin{equation*}
        G (x, y) =
        \begin{cases}
            A (y) \operatorname{f}_+ (\lambda, x) \text{ for } x > y, \\
            B (y) \operatorname{f}_- (\lambda, x) \text{ for } x < y.
        \end{cases}
    \end{equation*}
    In order for $G$ to satisfy \eqref{eqResKer5}-\eqref{eqResKer6}, for all $y \in \R \setminus \Z$, the coefficients $A$ and $B$ must satisfy
    \begin{align*}
        A (y) \operatorname{f}_+ (\lambda, y) - B (y) \operatorname{f}_- (\lambda, y) & = 0, \\
        A (y) \partial_x \operatorname{f}_+ (\lambda, x) \big|_{x = y} - B (y) \partial_x \operatorname{f}_- (\lambda, x) \big|_{x = y} & = 1,
    \end{align*}
    so that
    \begin{equation*}
        A (y) = \frac{\operatorname{f}_- (\lambda, y)}{\operatorname{W} (\lambda)}, \quad B (y) = \frac{\operatorname{f}_+ (\lambda, y)}{\operatorname{W} (\lambda)}.
    \end{equation*}
    This concludes the analysis part of the argument.
    
    Using the explicit expression of the Jost solution given by \eqref{eqJostSol14}-\eqref{eqJostSol15}, we obtain that $G (\lambda^2 + i 0)$, given by \eqref{eqResKer1}, is a solution of the equation \eqref{eqResKer4} and satisfies \eqref{eqResKer3a}-\eqref{eqResKer3b}. This concludes the synthesis part of the argument. We compute similarly the kernel of $\operatorname{R}_\alpha (\lambda^2 - i 0)$. This completes the proof of \eqref{eqResKer1}.
    

    Finally, let $\Phi : \R \to \C$ be continuous and bounded. Then, since $\sigma_{ess} (\H_\alpha) = [0,\infty)$, the Stone formula
    \begin{align*} 
        \Phi (\H_\alpha) \operatorname{P}
        & = \frac{1}{2 \pi i} \int_{0}^\infty \Phi (\lambda) \left( \operatorname{R}_\alpha (\lambda + i 0) - \operatorname{R}_\alpha (\lambda - i 0) \right) \, d \lambda \\
        & = \frac{1}{\pi i} \int_{0}^\infty \lambda \Phi (\lambda^2) \left( \operatorname{R}_\alpha (\lambda^2 + i 0) - \operatorname{R}_\alpha (\lambda^2 - i 0) \right) \, d \lambda,
    \end{align*}
    holds (see \cite[Theorem VII.13]{ReSi81}), where we made the change of variable $\lambda \mapsto \lambda^2$ in order to pass to the second line. Thus, we have
    \begin{align*} 
        \pi i \Phi (\H_\alpha) \operatorname{P}
        & = \int_0^\infty \lambda \Phi (\lambda^2) \operatorname{R}_\alpha (\lambda^2 + i 0) \, d \lambda + \int_0^\infty (-\lambda) \Phi ((-\lambda)^2) \operatorname{R}_\alpha ((-\lambda)^2 + i 0) \, d \lambda \\
        & = \int_{-\infty}^\infty \lambda \Phi (\lambda^2) \operatorname{R}_\alpha (\lambda^2 + i 0) \, d \lambda,
    \end{align*}
    where the first equality holds as the kernel of $\operatorname{R}_\alpha$ satisfies
    \begin{equation*}
        \operatorname{G}_\alpha (\lambda^2 - i 0)
        = \operatorname{G}_\alpha ((-\lambda)^2 + i 0).
    \end{equation*}
    Then, \eqref{eqResKer7} follows and this concludes the proof.
\end{proof}

\section{Dispersive estimate} \label{sec5}

In this section, we prove Theorem \ref{thMain} by splitting the Stone formula into two regimes: for values of $\lambda > \| \alpha \|_{\ell^1 (\Z)}$, also called the \textit{high-energy} part; and for values of $\lambda < \| \alpha \|_{\ell^1 (\Z)}$, also called the \textit{low-energy} part. The approach used was developed in \cite[Section $2$]{GoSc04}.

\subsection{High-energy part}

This subsection is dedicated to prove high-energy part of the estimate, stated in the following proposition.

\begin{proposition} \label{prpHighEn}
    Assume that there exists $\mu > 0$ such that $\alpha \in \ell^{1, 1 + \mu} (\Z)$. Let $\chi$ be a smooth cut-off such that $\chi (\lambda) = 0$ for $|\lambda| \leq \| \alpha \|_{\ell^1 (\Z)}^2$ and $\chi (\lambda) = 1$ for $|\lambda| \geq 2 \| \alpha \|_{\ell^1 (\Z)}^2$. Then, for all $t \in \R$ and $f \in \mathcal{S} (\R)$, the following estimate
    \begin{equation} \label{eqHighEn1}
        \left\| e^{i t \H_\alpha} \chi (\H_\alpha) \operatorname{P} f \right\|_{L^\infty (\R)}
        \lesssim |t|^{-\frac{1}{2}} \| f \|_{L^1 (\R)}
    \end{equation}
    holds.
\end{proposition}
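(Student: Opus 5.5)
We follow the Born series strategy of \cite[Section 2]{GoSc04}. By the Stone formula \eqref{eqResKer7} applied with $\Phi(\cdot) = e^{it\cdot}\chi(\cdot)$, we write
\begin{equation*}
    e^{it\H_\alpha}\chi(\H_\alpha)\operatorname{P} f(x) = \frac{1}{\pi i}\int_{-\infty}^\infty \lambda\, e^{it\lambda^2}\chi(\lambda^2)\, \operatorname{R}_\alpha(\lambda^2+i0) f(x)\, d\lambda .
\end{equation*}
Since $\chi$ vanishes near zero, no condition at zero energy is needed here. We then iterate the resolvent identity \eqref{eqResExt1} for the Friedrichs extensions, in the form $\tilde{\operatorname{R}}_\alpha = \tilde{\operatorname{R}}_0 - \tilde{\operatorname{R}}_0 V \tilde{\operatorname{R}}_\alpha$ with $V = \tilde{\H}_\alpha - \tilde{\H}_0$. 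This gives the Born series
\begin{equation*}
    \operatorname{R}_\alpha(\lambda^2+i0) = \sum_{n\ge 0} (-1)^n \tilde{\operatorname{R}}_0(\lambda^2+i0)\bigl(V\tilde{\operatorname{R}}_0(\lambda^2+i0)\bigr)^n .
\end{equation*}
By \eqref{eqKerFun} and \eqref{eqKerDis}, the $n$-th term has the explicit kernel
\begin{equation*}
    K_n(\lambda;x,y) = \sum_{j_1,\dots,j_n\in\Z} \prod_{l=1}^n \alpha_{j_l}\, \Bigl(\frac{i}{2\lambda}\Bigr)^{n+1} e^{i\lambda(|x-j_1|+|j_1-j_2|+\dots+|j_n-y|)} .
\end{equation*}
The $n=0$ term is handled by the free estimate \eqref{eqFLDis}, since $\chi(\H_0)$ is harmless. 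More precisely, the free term gives $e^{it\H_0}\chi(\H_0)$, which is bounded $L^1\to L^\infty$ by $|t|^{-1/2}$ because its kernel is the free kernel convolved with an $L^1$ function.

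For $n\ge1$ we fix $x,y,j_1,\dots,j_n$, set $L = |x-j_1|+\dots+|j_n-y|$, and study the oscillatory integral
\begin{equation*}
    I_n(t,L) = \int_{\R} e^{it\lambda^2 + i\lambda L}\, \frac{\lambda\,\chi(\lambda^2)}{(2\lambda)^{n+1}}\, d\lambda .
\end{equation*}
The key tool is the standard lemma that
\begin{equation*}
    \Bigl|\int e^{it\lambda^2+i\lambda L} \psi(\lambda)\, d\lambda\Bigr| \lesssim |t|^{-1/2}\, \|\mathcal{F}\psi\|_{\mathcal M} \quad \text{uniformly in } L .
\end{equation*}
This holds because the integral equals $\int \mathcal{F}\psi(\xi)\,(\text{free kernel at } t, \text{ shifted by } L+2\pi\xi)\, d\xi$, and the free Gaussian kernel is bounded by $|t|^{-1/2}$. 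With $\psi_n(\lambda) = \lambda\chi(\lambda^2)(2\lambda)^{-n-1}$, it remains to show
\begin{equation*}
    \|\mathcal F\psi_n\|_{L^1} \lesssim C^n\, \|\alpha\|_{\ell^1}^{-n}\, \rho^n \quad \text{for some } \rho<1 .
\end{equation*}
Then summing over $j_1,\dots,j_n$ gives a factor $\|\alpha\|_{\ell^1}^n$, and the series in $n$ converges geometrically. This is exactly why the cutoff is placed at $|\lambda|\gtrsim\|\alpha\|_{\ell^1}$ (through $\lambda^2\ge\|\alpha\|^2_{\ell^1}$).

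The main obstacle is precisely the uniform $L^1$ bound on $\mathcal F\psi_n$ with the right geometric constant. Writing $\psi_n(\lambda) = \chi(\lambda^2)\lambda^{-n}2^{-n-1}$, we use the elementary bound $\|\mathcal F g\|_{L^1}\lesssim \|g\|_{L^2}^{1/2}\|g'\|_{L^2}^{1/2}$ (or $\|g\|_{H^1}$). On the support $|\lambda|\ge \|\alpha\|_{\ell^1}$ one has $|\lambda|^{-n}\le \|\alpha\|_{\ell^1}^{-n}$, and derivatives cost an extra factor $n/|\lambda|$. This yields
\begin{equation*}
    \|\mathcal F\psi_n\|_{L^1}\lesssim n\,2^{-n}\|\alpha\|_{\ell^1}^{-n+1/2}\cdots
\end{equation*}
The factor $2^{-n}$ from the free kernel normalization $i/(2\lambda)$ beats the polynomial loss $n$, giving convergence. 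If the constant turns out insufficient, we would first try enlarging the cutoff threshold by a fixed factor, which only changes implicit constants.

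Finally, we justify exchanging the sums over $n$ and $j_l$ with the $\lambda$-integral, and the convergence of the Born series for $f\in\mathcal S(\R)$. Both follow by dominated convergence using the same absolute bounds, first inserting a damping factor $e^{-\varepsilon\lambda^2}$ and letting $\varepsilon\to0$. Identifying the Born series with $\operatorname{R}_\alpha(\lambda^2+i0)$ for $|\lambda|\ge\|\alpha\|_{\ell^1}$ uses the pointwise identity derived in the proof of Theorem \ref{thLAP}. That identity shows $u=\operatorname{R}_\alpha(\lambda^2+i0)f$ satisfies
\begin{equation*}
    u(x) = \operatorname{R}_0 f(x) - \sum_j \frac{i\alpha_j}{2\lambda}\,u(j)\,e^{i\lambda|x-j|} .
\end{equation*}
The map $u\mapsto\sum_j \frac{i\alpha_j}{2\lambda}u(j)e^{i\lambda|\cdot-j|}$ is a contraction on $L^\infty$ with norm at most $\|\alpha\|_{\ell^1}/(2|\lambda|)\le 1/2$, so Neumann iteration converges in $L^\infty$ and reproduces the kernels $K_n$.
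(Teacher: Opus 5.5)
Your proposal is correct and follows essentially the same route as the paper: the Stone formula, the Born series for the Friedrichs-extended resolvents with the explicit kernels from \eqref{eqKerFun}--\eqref{eqKerDis}, reduction of each term to $\sup_{a}\bigl|\int e^{it\lambda^2+ia\lambda}\chi(\lambda^2)\lambda^{-n}\,d\lambda\bigr|$ controlled through the $L^1$ norm of a Fourier transform and \eqref{eqFLDis}, and summation in $n$ using $(\|\alpha\|_{\ell^1(\Z)}/2|\lambda|)^n\le 2^{-n}$ on the support of the cut-off. The differences are only technical: you damp with $e^{-\varepsilon\lambda^2}$ instead of the truncation $\chi(\cdot)\phi(\cdot/L)$; you bound $\|\mathcal{F}(\chi(\lambda^2)\lambda^{-n})\|_{L^1}$ explicitly where the paper cites \cite{GoSc04}, and by scaling this is $\lesssim\|\alpha\|_{\ell^1(\Z)}^{-n}$ up to constants and at most polynomial factors in $n$, so your garbled exponent does not matter and enlarging the threshold is unnecessary; and you justify the Born series by an $L^\infty$-contraction rather than by boundedness of the Jost solutions, which needs the a priori bound $\sum_j|\alpha_j||u(j)|<\infty$, immediate from $u\in H^{1,-s}(\R)$ with $s<(1+\mu)/2$.
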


The proof relies on the Born series expansion of $\operatorname{R}_\alpha (\lambda^2 + i 0)$. As stated in Section \ref{sec3}, since the operators $\H_{\alpha}$ and $\H_0$ are not defined on the same domain, the resolvent formula cannot be applied directly and, instead, we work with their Friedrichs extensions $\tilde{\H}_\alpha$ and $\tilde{\H}_0$, which share a common domain. The following lemma holds.

\begin{lemma} \label{lemBorn}
    Assume that there exists $\mu > 0$ such that $\alpha \in \ell^{1, 1 + \mu} (\Z)$. Let $|\lambda| > \| \alpha \|_{\ell^1 (\Z)}$ and $f, g \in \mathcal{S} (\R)$. Then, the Born series expansion
    \begin{equation} \label{eqBorn1}
        \left\langle \operatorname{R}_\alpha (\lambda^2 + i 0) f, g \right\rangle_{L^\infty (\R), L^1 (\R)}
        = \sum_{n = 0}^\infty \left\langle \tilde{\operatorname{R}}_{0} (\lambda^2 + i 0) \left( (\tilde{\H}_\alpha - \tilde{\H}_0) \tilde{\operatorname{R}}_{0} (\lambda^2 + i 0) \right)^n f, g \right\rangle_{L^\infty (\R), L^1 (\R)}
    \end{equation}
    holds.
\end{lemma}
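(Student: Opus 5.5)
We plan to iterate the limiting resolvent identity \eqref{eqResExt1}; more precisely, we pass to the limit $\varepsilon \to 0^+$ in the resolvent identity \eqref{eqResExtId1}, which gives
\begin{equation*}
    \tilde{\operatorname{R}}_\alpha (\lambda^2 + i 0)
    = \tilde{\operatorname{R}}_0 (\lambda^2 + i 0) - \tilde{\operatorname{R}}_0 (\lambda^2 + i 0) (\tilde{\H}_\alpha - \tilde{\H}_0) \tilde{\operatorname{R}}_\alpha (\lambda^2 + i 0)
\end{equation*}
in $B (H^{-1, s} (\R), H^{1, -s} (\R))$ for some fixed $s \in (1/2, (1 + \mu)/2)$. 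Writing $V = \tilde{\H}_\alpha - \tilde{\H}_0$ and iterating $N$ times gives
\begin{equation*}
    \tilde{\operatorname{R}}_\alpha = \sum_{n = 0}^{N} (-1)^n \tilde{\operatorname{R}}_0 \left( V \tilde{\operatorname{R}}_0 \right)^n + (-1)^{N+1} \left( \tilde{\operatorname{R}}_0 V \right)^{N+1} \tilde{\operatorname{R}}_\alpha .
\end{equation*}
This is the finite Born expansion. The statement \eqref{eqBorn1} is then the claim that the remainder pairs to zero against $g$ as $N \to \infty$ and that the series converges. The signs $(-1)^n$ are presumably absorbed in the paper's convention; I would keep track of them and, if needed, read \eqref{eqBorn1} with $-V$.

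The key step is a concrete bound on the operator $K = V \tilde{\operatorname{R}}_0 (\lambda^2 + i 0)$. The Friedrichs-resolvent spaces are not the natural ones here. Instead I would work through the sampling structure from Lemma \ref{lemComp}, namely $V = A \circ D_\alpha \circ S$, together with the explicit kernel \eqref{eqKerFun}. For any $h$ with $\tilde{\operatorname{R}}_0 h$ continuous,
\begin{equation*}
    S \tilde{\operatorname{R}}_0 (\lambda^2 + i 0) A D_\alpha v = \left( \sum_{k} \frac{i e^{i \lambda |j - k|}}{2 \lambda} \alpha_k v_k \right)_{j \in \Z} .
\end{equation*}
Hence the $n$-th term of the series equals
\begin{equation*}
    \sum_{j_1, \dots, j_n} \alpha_{j_1} \cdots \alpha_{j_n} \, \overline{\tilde{\operatorname{R}}_0 (\lambda^2 - i0) g} (j_1) \left( \prod_{l = 1}^{n-1} \frac{i e^{i \lambda |j_l - j_{l+1}|}}{2 \lambda} \right) \tilde{\operatorname{R}}_0 (\lambda^2 + i0) f (j_n),
\end{equation*}
and is therefore bounded by
\begin{equation*}
    \| \tilde{\operatorname{R}}_0 f \|_{L^\infty} \, \| \tilde{\operatorname{R}}_0 g \|_{L^\infty} \, \| \alpha \|_{\ell^1}^n (2 |\lambda|)^{-(n-1)} .
\end{equation*}
Since $|\lambda| > \| \alpha \|_{\ell^1 (\Z)}$, the ratio $\| \alpha \|_{\ell^1} / (2 |\lambda|) < 1/2$, so the series in \eqref{eqBorn1} converges absolutely. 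Here $\| \tilde{\operatorname{R}}_0 f \|_{L^\infty} \le \| f \|_{L^1} / (2 |\lambda|)$ by \eqref{eqKerFun}. The same computation identifies each pairing $\langle \tilde{\operatorname{R}}_0 (V \tilde{\operatorname{R}}_0)^n f, g \rangle$ with this absolutely convergent sum. This is legitimate because $f, g \in \mathcal{S} \subset L^{2, s}$ and the action of $V$ on $H^{1, -s}$ is given by point evaluations (Lemma \ref{lemResExtId}).

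The main obstacle is showing that the remainder $\langle (\tilde{\operatorname{R}}_0 V)^{N+1} \tilde{\operatorname{R}}_\alpha f, g \rangle$ tends to zero. By the same sampling representation, this remainder equals
\begin{equation*}
    \sum \alpha_{j_1} \cdots \alpha_{j_{N+1}} \, \overline{\tilde{\operatorname{R}}_0 (\lambda^2 - i0) g} (j_1) \prod_{l} (\cdots) \, u (j_{N+1}), \qquad u = \tilde{\operatorname{R}}_\alpha (\lambda^2 + i 0) f .
\end{equation*}
So I need $\sup_j |u (j)| < \infty$, after which the bound decays like $(\| \alpha \|_{\ell^1} / 2 |\lambda|)^N$. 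Boundedness of $u$ on $\Z$ follows from the kernel representation \eqref{eqResKer1}: for $\lambda > 0$, Proposition \ref{prpJostSol} gives $\operatorname{f}_\pm (\lambda, \cdot) \in L^\infty$, and Proposition \ref{prpWronski} gives $\operatorname{W} (\lambda) \neq 0$. Hence $|u (x)| \le \| \operatorname{f}_+ \|_\infty \| \operatorname{f}_- \|_\infty |\operatorname{W} (\lambda)|^{-1} \| f \|_{L^1}$. The case $\lambda < 0$ reduces to this via $\operatorname{G}_\alpha (\lambda^2 - i0)$ as at the end of the proof of Theorem \ref{thResKer}. Alternatively, one can avoid Jost solutions by using the point-evaluation identity $u (j) = \tilde{\operatorname{R}}_0 f (j) - \sum_k G_0 (j, k) \alpha_k u (k)$. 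Viewed as an equation on $\ell^\infty (\Z)$, its operator has norm $\le \| \alpha \|_{\ell^1} / (2 |\lambda|) < 1$, which gives the bound on $u$ directly, provided one already knows $u|_{\Z} \in \ell^\infty$ or works with the unique $\ell^\infty$ solution. Finally, $\operatorname{R}_\alpha (\lambda^2 + i0) f = \tilde{\operatorname{R}}_\alpha (\lambda^2 + i0) f$ by the definition in the proof of Theorem \ref{thLAP}, and letting $N \to \infty$ yields \eqref{eqBorn1}.
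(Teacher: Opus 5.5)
Your proposal is correct and follows the paper's proof. Like the paper, you iterate the limiting resolvent identity and write each term through point evaluations and the free kernel \eqref{eqKerFun}, which gives the geometric bound $\frac{1}{2|\lambda|}\bigl(\|\alpha\|_{\ell^1(\Z)}/(2|\lambda|)\bigr)^n\|f\|_{L^1(\R)}\|g\|_{L^1(\R)}$, and you use $\operatorname{R}_\alpha(\lambda^2+i0)f\in L^\infty(\R)$ from Theorem \ref{thResKer} and Proposition \ref{prpJostSol}. Your explicit estimate of the remainder $(\tilde{\operatorname{R}}_0 V)^{N+1}\tilde{\operatorname{R}}_\alpha f$, and your handling of the sign (the paper's proof does work with $\tilde{\H}_0-\tilde{\H}_\alpha$), make precise two points the paper leaves implicit.
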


\begin{proof}
    Let $\lambda \in \R^*$ and $f \in \mathcal{S} (\R)$,  For any $s \in (1/2, (1 + \mu)/2)$, the inclusion $\mathcal S (\R) \subset L^{2, s} (\R)$ holds. Furthermore, since $\lambda \neq 0$, Proposition \ref{prpJostSol} ensures that $\operatorname{f}_+ (\pm \lambda, \cdot), \operatorname{f}_- (\pm \lambda, \cdot) \in L^\infty (\R)$. Thus, by Theorem \ref{thResKer}, we have $\operatorname{R}_\alpha (\lambda^2 + i 0) f \in L^\infty (\R)$.
    
    We first compute
    \begin{equation*}
        (\tilde{\H}_0 - \tilde{\H}_\alpha) \tilde{\operatorname{R}}_0 (\lambda^2 + i 0) f
        = - \frac{i}{2 \lambda} \sum_{j \in \Z}  \left( \alpha_j \int_{-\infty}^\infty e^{i \lambda |j - y|} f (y) \, dy \right) \delta(\cdot - j).
    \end{equation*}
    For $n \in \N$, we obtain by induction that
    \begin{multline*}
        \left( (\tilde{\H}_0 - \tilde{\H}_\alpha) \tilde{\operatorname{R}}_0 (\lambda^2 + i 0) \right)^n f
        = \left( -\frac{i}{2 \lambda} \right)^n \sum_{j_1, \ldots, j_n \in \Z} \left( \alpha_{j_1} \int_{-\infty}^\infty e^{i \lambda |j_1 - y|} f (y) \, dy \right) \\
        \times \left( \prod_{k = 2}^n \alpha_{j_k} e^{i \lambda |j_{k-1} - j_k|} \right) \delta(\cdot - j_n).
    \end{multline*}
    Thus, we have
    \begin{multline*}
        \tilde{\operatorname{R}}_0 (\lambda^2 + i 0) \left( (\tilde{\H}_0 - \tilde{\H}_\alpha) \tilde{\operatorname{R}}_0 (\lambda^2 + i 0) \right)^{n} f
        = \left( -\frac{i}{2 \lambda} \right)^{n+1} \sum_{j_1, \ldots, j_n \in \Z} \left( \alpha_{j_1} \int_{-\infty}^\infty e^{i \lambda |j_1 - y|} f (y) \, dy \right) \\
        \times \left( \prod_{k = 2}^n \alpha_{j_k} e^{i \lambda |j_{k-1} - j_k|} \right) e^{i \lambda |\cdot - j_n|},
    \end{multline*}
    so that
    \begin{multline} \label{eqBorn3}
        \left\langle \tilde{\operatorname{R}}_0 (\lambda^2 + i 0) \left( (\tilde{\H}_0 - \tilde{\H}_\alpha) \tilde{\operatorname{R}}_0 (\lambda^2 + i 0) \right)^{n} f, g \right\rangle_{L^\infty (\R), L^1 (\R)} \\
        = \left( -\frac{i}{2 \lambda} \right)^{n+1} \int_{-\infty}^\infty \sum_{j_1, \ldots, j_n \in \Z} \left( \alpha_{j_1} \int_{-\infty}^\infty e^{i \lambda |j_1 - y|} f (y) \, dy \right) \left( \prod_{k = 2}^n \alpha_{j_k} e^{i \lambda |j_{k-1} - j_k|} \right) e^{i \lambda |x - j_n|} \, \overline{g (x)} \,  \, dx.
    \end{multline}
    Therefore,
    \begin{multline} \label{eqBorn2}
        \left| \left\langle \tilde{\operatorname{R}}_0 (\lambda^2 + i 0) \left( (\tilde{\H}_0 - \tilde{\H}_\alpha) \tilde{\operatorname{R}}_0 (\lambda^2 + i 0) \right)^{n} f, g \right\rangle_{L^\infty (\R), L^1 (\R)} \right| \\
        \leq \frac{1}{2 \lambda} \left( \frac{\| \alpha \|_{\ell^1 (\Z)}}{2 \lambda} \right)^{n} \| f \|_{L^1 (\R)} \| g \|_{L^1 (\R)},
    \end{multline}
    which is the term of a converging serie in $n$ as $\lambda > \| \alpha \|_{\ell^1 (\Z)}$.
    
    As $\alpha \in \ell^{1, 1 + \mu} (\Z)$, we have that the operator $\operatorname{R}_\alpha (\lambda^2 \pm i 0)$ is well-defined by Theorem \ref{thLAP}. Iterating the resolvent identity \eqref{eqResExt1}, we obtain
    \begin{align*}
        \left\langle \operatorname{R}_\alpha (\lambda^2 + i 0) f, g \right\rangle_{L^\infty (\R), L^1 (\R)}
        & = \left\langle \tilde{\operatorname{R}}_\alpha (\lambda^2 + i 0) f, g \right\rangle_{L^\infty (\R), L^1 (\R)} \\
        & = \sum_{n = 0}^\infty \left\langle \tilde{\operatorname{R}}_0 (\lambda + i 0) \left( (\tilde{\H}_0 - \tilde{\H}_\alpha) \tilde{\operatorname{R}}_0 (\lambda + i 0) \right)^n f, g \right\rangle_{L^\infty (\R), L^1 (\R)},
    \end{align*}
    where the convergence of the serie is justified by \eqref{eqBorn2}. This concludes the proof.
\end{proof}

We turn to the proof of the high–energy part of the estimate.

\begin{proof}[Proof of Proposition \ref{prpHighEn}]
    Let $\phi \in C_0^\infty (\R)$ be such that $\phi (\lambda) = 1$ if $|\lambda| < 1$ and $\phi (\lambda) = 0$ if $|\lambda| > 2$. Let $L > 0$, $\chi_L$ be the truncated cut-off given by $\chi_L = \chi (\cdot) \phi (\cdot / L)$ and $g \in \mathcal{S} (\R)$. We have
    \begin{multline*}
        \left\langle e^{i t \H_\alpha} \chi_L (\H_\alpha) f, g \right\rangle_{L^\infty (\R), L^1 (\R)}
        = \frac{1}{\pi i} \sum_{n = 0}^\infty \int_{-\infty}^\infty \lambda e^{i t \lambda^2} \chi_L (\lambda^2) \\
        \times \left\langle \tilde{\operatorname{R}}_{0} (\lambda^2 + i 0) \left( (\tilde{\H}_\alpha - \tilde{\H}_0) \tilde{\operatorname{R}}_{0} (\lambda^2 + i 0) \right)^n f, g \right\rangle_{L^\infty (\R), L^1 (\R)} \, d \lambda
    \end{multline*}
    where we used Stone formula \eqref{eqResKer7} and Born series expansion \eqref{eqBorn1}. Thus,
    \begin{multline*}
        \left| \left\langle e^{i t \H_\alpha} \chi_L (\H_\alpha) f, g \right\rangle_{L^\infty (\R), L^1 (\R)} \right|
        \lesssim \sum_{n = 0}^\infty 2^{-(n + 1)} \sum_{J = (j_1, \dots, j_n) \in \Z^n} |\alpha_{j_1} \dots \alpha_{j_n}| \int_{\R^2} |f (y)| |g (x)| \\
        \times \left| \int_{-\infty}^\infty e^{i t \lambda^2} e^{i \lambda (|j_1 - y| + |x - j_n| + \sum_{k=2}^n |j_k - j_{k-1}|)} \frac{\chi_L (\lambda^2)}{ \lambda^{n}} \, d \lambda \right| \, dy \,  \, dx,
    \end{multline*}
    by \eqref{eqBorn3} and Fubini's theorem. We obtain
    \begin{multline*}
        \left| \left\langle e^{i t \H_\alpha} \chi_L (\H_\alpha) f, g \right\rangle_{L^\infty (\R), L^1 (\R)} \right| \\
        \lesssim \sum_{n = 0}^\infty 2^{-(n + 1)} \| f \|_{L^1 (\R)} \| g \|_{L^1 (\R)} \sup_{a \in \R} \left| \int_{-\infty}^\infty e^{i (t \lambda^2 + a \lambda)} \chi_L (\lambda^2) \left( \frac{\| \alpha \|_{\ell^1 (\Z)}}{\lambda} \right)^n \, d \lambda \right| \\
        \lesssim \sum_{n = 0}^\infty 2^{-(n + 1)} \| f \|_{L^1 (\R)} \| g \|_{L^1 (\R)} \| e^{i t \H_0} \psi \|_{L^\infty (\R)},
    \end{multline*}
    where $\psi = \mathcal{F} (\lambda \in \R \mapsto \chi_L (\lambda^2) \| \alpha \|_{\ell^1 (\Z)}^n \lambda^{-n})$. Moreover, we have that $\psi \in L^1 (\R)$ (see \cite[equation $(11)$]{GoSc04}). By \eqref{eqFLDis}, we obtain
    \begin{equation*}
        \sup_{L \geq 1} \left| \left\langle e^{i t \H_\alpha} \chi_L (\H_\alpha) f, g \right\rangle_{L^\infty (\R), L^1 (\R)} \right|
        \lesssim |t|^{-\frac{1}{2}} \| f \|_{L^1 (\R)} \| g \|_{L^1 (\R)}.
    \end{equation*}
    This concludes the proof.
\end{proof}

\subsection{Low-energy part}

This subsection is dedicated to prove the low-energy part of the estimate, stated in the following proposition.

\begin{proposition} \label{prpLowEn}
    Assume that there exists $\mu \in (0, 1)$ such that $\alpha \in \ell^{1, 1 + \mu} (\Z)$ and $\operatorname{W} (0) \neq 0$; or that $\operatorname{W} (0) = 0$ and $\alpha \in \ell^{1, 2} (\Z)$. Let $\chi$ be a smooth and compactly supported cut-off. Then, for all $t \in \R^*$ and $f \in \mathcal S (\R)$, the following estimate
    \begin{equation} \label{eqLowEn1}
        \left\| e^{i t \H_\alpha} \chi (\H_\alpha) \operatorname{P} f \right\|_{L^\infty (\R)}
        \lesssim |t|^{-\frac{1}{2}} \| f \|_{L^1 (\R)}
    \end{equation}
    holds.
\end{proposition}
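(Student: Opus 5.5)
We follow the low-energy argument of \cite[Section 2]{GoSc04}. By the Stone formula \eqref{eqResKer7} and the kernel representation \eqref{eqResKer1}, for $f,g\in\mathcal S(\R)$ it suffices to bound the kernel of $e^{it\H_\alpha}\chi(\H_\alpha)\operatorname{P}$, namely
\begin{equation*}
    K_t(x,y)=\frac{1}{\pi i}\int_{-\infty}^\infty e^{it\lambda^2}\lambda\chi(\lambda^2)\frac{\operatorname{f}_+(\lambda,x)\operatorname{f}_-(\lambda,y)}{\operatorname{W}(\lambda)}\,d\lambda ,\quad x>y,
\end{equation*}
uniformly by $|t|^{-1/2}$ (the case $x<y$ is symmetric). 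In terms of \eqref{eqm+-} this reads
\begin{equation*}
    K_t(x,y)=\frac{1}{\pi i}\int e^{it\lambda^2+i\lambda(x-y)}\,\chi(\lambda^2)\,\frac{\lambda}{\operatorname{W}(\lambda)}\,\operatorname{m}_+(\lambda,x)\operatorname{m}_-(\lambda,y)\,d\lambda .
\end{equation*}
The key tool is the elementary estimate: if $\widehat{F}\in\mathcal M$, then $\sup_{a}|\int e^{it\lambda^2+ia\lambda}F(\lambda)\,d\lambda|\lesssim |t|^{-1/2}\|\widehat F\|_{\mathcal M}$. This follows by writing $F$ as the Fourier transform of a measure and using that the free kernel $\int e^{it\lambda^2+ia\lambda}d\lambda$ is bounded by $|t|^{-1/2}$. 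So the task reduces to showing that the Fourier transform (in $\lambda$) of $\chi(\lambda^2)\frac{\lambda}{\operatorname{W}(\lambda)}\operatorname{m}_+(\lambda,x)\operatorname{m}_-(\lambda,y)$ is a finite measure, uniformly in $x,y$. Since $\mathcal M$ is an algebra under convolution, it suffices to treat each factor separately.

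\textbf{Factor $\operatorname{m}_\pm$.} First consider $x\ge 0\ge y$, where Lemma \ref{lemBound} gives uniform bounds. Using \eqref{eqM+1}, \eqref{eqm+3}, or directly $\operatorname{m}_+(\lambda,x)-1=\sum_{j>x}\alpha_j\operatorname{m}_+(\lambda,j)\int_0^{j-x}e^{2i\lambda s}ds$, we see that $\operatorname{m}_+(\cdot,x)-1$ is the Fourier transform of a function supported in $[0,\infty)$, consistent with its holomorphy and boundedness on $\C^+$ (Hardy space). Iterating the Volterra-type series \eqref{eqm+3}, each term is a convolution of measures of the form $\alpha_j\mathbf 1_{[0,j-x]}$, whose total mass is bounded by $\prod|\alpha_{j_l}|(j_l-j_{l-1})$. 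Summing gives $\|\widehat{\operatorname{m}_+(\cdot,x)}\|_{\mathcal M}\lesssim \exp(\|\alpha\|_{\ell^{1,1}})$ for $x\ge0$, and $\operatorname{m}_-(\cdot,y)$ for $y\le0$ is handled the same way. Multiplying by the cutoff $\chi(\lambda^2)$ keeps everything in $\mathcal M$.

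\textbf{Other regions.} When $x,y$ lie on the same side, say $x>y\ge0$, $\operatorname{m}_-(\lambda,y)$ is not uniformly controlled. Instead we rewrite $\operatorname{f}_-$ using \eqref{eqCoeff1}:
\begin{equation*}
    \operatorname{f}_-=\operatorname{a}_-\operatorname{f}_+(\lambda,\cdot)+\operatorname{b}\,\operatorname{f}_+(-\lambda,\cdot).
\end{equation*}
The kernel then becomes a sum of terms involving $\frac{\operatorname{a}_-(\lambda)}{\operatorname{b}(\lambda)}$ and $1$, multiplied by products of $\operatorname{m}_+(\pm\lambda,\cdot)$ at nonnegative points with phases $e^{i\lambda(x\pm y)}$. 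These are again controlled by the previous step, provided $\frac{\operatorname{a}_-}{\operatorname{b}}\chi$ has Fourier transform in $\mathcal M$.

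\textbf{Factor $\lambda/\operatorname{W}(\lambda)=-1/(2i\operatorname{b}(\lambda))$.} This is where we invoke Wiener's lemma (Lemma \ref{lemWiener}). By Proposition \ref{prpNonZero} (case $\alpha\in\ell^{1,2}$), or by $\operatorname{W}(0)\ne0$ (which gives $1/\operatorname{b}(\lambda)=C\lambda+\dots$ with $\lambda\operatorname{b}(\lambda)$ nonvanishing near $0$), the relevant function, either $\operatorname{b}$ or $\lambda\operatorname{b}(\lambda)$, is nonvanishing on $\operatorname{supp}\chi$. We use
\begin{equation*}
    \operatorname{b}(\lambda)=1-\frac{1}{2i\lambda}\sum_j\alpha_j\operatorname{m}_+(\lambda,j).
\end{equation*}
In the case $\operatorname{W}(0)\neq0$, $2i\lambda\operatorname{b}(\lambda)=2i\lambda-\sum_j\alpha_j\operatorname{m}_+(\lambda,j)$. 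Its localized Fourier transform is in $\mathcal M$ by the $\operatorname{m}_+$ bounds (Lemmas \ref{lemFourier}, \ref{lemFBound}), with weights $\alpha\in\ell^{1,1+\mu}$ absorbing the $1+|j|$ growth for $j<0$. Since this function is nonvanishing, Wiener gives $\chi/(\lambda\operatorname{b})\in\widehat{\mathcal M}$, and $\lambda/\operatorname{W}=\lambda\cdot\frac{1}{\lambda \operatorname{b}}\cdot c$ is fine. In the resonant case $\operatorname{W}(0)=0$, we instead need $\frac{\operatorname{m}_+(\lambda,j)-\operatorname{m}_+(0,j)}{\lambda}$ to have Fourier transform of mass $\lesssim 1+j^2$. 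This is the Fourier analogue of \eqref{eqBound2}: dividing by $\lambda$ corresponds to integrating the kernel, costing one extra power of $j$, and this is exactly why $\ell^{1,2}$ is required. The coefficient $\operatorname{a}_-$ is treated through the analogous formula $\operatorname{a}_-(\lambda)\propto\frac1{\lambda}\sum_j\alpha_je^{2i\lambda j}\operatorname{m}_-(\lambda,j)$.

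\textbf{Main obstacle.} The main difficulty is this Wiener step: proving that the localized Fourier transforms of $\lambda\operatorname{b}(\lambda)$ (respectively $\operatorname{b}$) and of $\lambda\operatorname{a}_-(\lambda)$ are finite measures, with uniform control of the sums over $j<0$, where $\operatorname{m}_+(\lambda,j)$ grows linearly in $|j|$. The approach is to sum over $j$ in the series representation with weights $|\alpha_j|(1+|j|)^{1+\mu}$ (respectively $(1+j^2)$), mimicking the Gronwall-type argument of Lemma \ref{lemBound} at the level of total variation norms.
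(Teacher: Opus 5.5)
Your argument is correct and has the same skeleton as the paper's proof. Both use the Stone formula with the Jost kernel and the uniform bounds of Lemma \ref{lemFourier} on $\mathcal F(\operatorname{m}_\pm(\lambda,x)-1)$ for $\pm x\ge 0$. Both split into the mixed region and the same-side regions, substituting $\operatorname{f}_-=\operatorname{a}_-\operatorname{f}_+(\lambda,\cdot)+\operatorname{b}\,\operatorname{f}_+(-\lambda,\cdot)$ in the latter, and both finish with Wiener's lemma and the free estimate \eqref{eqFLDis}.

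Where you genuinely differ is in how you produce the inputs to Wiener's lemma.
\begin{itemize}
\item The paper writes $\tilde\chi\operatorname{W}$ and $\lambda\operatorname{a}_-$ as Wronskians evaluated at the single point $x=0$ (Lemma \ref{lemFBound}). It therefore only needs the half-line bounds of Lemma \ref{lemFourier}, including the $\partial_x$-bound \eqref{eqFourier2}. In the resonant case it divides $\tilde\chi\operatorname{W}$ by $\lambda$ globally, using $\int\mathcal F(\tilde\chi\operatorname{W})=\tilde\chi(0)\operatorname{W}(0)=0$ and the first moment of $\mathcal F(\tilde\chi\operatorname{W})$.
\item You instead use the scattering-integral formulas $\operatorname{W}(\lambda)=-2i\lambda+\sum_j\alpha_j\operatorname{m}_+(\lambda,j)$ and $2i\lambda\operatorname{a}_-(\lambda)=\sum_j\alpha_j e^{-2i\lambda j}\operatorname{m}_-(\lambda,j)$. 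The sign in your exponent is immaterial for total-variation norms.
\end{itemize}

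Your route costs an estimate the paper never proves: control of $\mathcal F(\operatorname{m}_+(\lambda,x)-1)$ on the negative half-line. That estimate does hold.
\begin{itemize}
\item In \eqref{eqm+3}, the $n$-th term has Fourier transform $\mathbf 1_{[0,d_1]}\ast\cdots\ast\mathbf 1_{[0,d_n]}$ with $d_l=j_l-j_{l-1}$. Its mass is $\prod_l d_l$ and its first moment is $\tfrac12(j_n-x)\prod_l d_l$.
\item For $s<t$ and $w(s)=1+\max(-s,0)$, the elementary inequality $t-s\le(1+|t|)\,w(s)/w(t)$ telescopes to $\prod_l d_l\le w(x)\prod_l(1+|j_l|)$.
\item This gives mass $\lesssim 1+\max(-x,0)$ under $\alpha\in\ell^{1,1}(\Z)$, and first moment $\lesssim 1+x^2$ under $\alpha\in\ell^{1,2}(\Z)$.
\end{itemize}

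The first-moment bound is exactly what your term-by-term division of $\operatorname{m}_+(\lambda,j)-\operatorname{m}_+(0,j)$ by $\lambda$ requires, since $\sum_j\alpha_j\operatorname{m}_\pm(0,j)=\operatorname{W}(0)=0$. For $\operatorname{a}_-$ in the resonant case, which you only call "analogous", you also need the splitting $e^{-2i\lambda j}\operatorname{m}_-(\lambda,j)-\operatorname{m}_-(0,j)=(e^{-2i\lambda j}-1)\operatorname{m}_-(\lambda,j)+(\operatorname{m}_-(\lambda,j)-\operatorname{m}_-(0,j))$. Each piece costs one power of $|j|$.

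So your route needs one extra lemma. In return it gives explicit formulas parallel to the proof of Proposition \ref{prpNonZero} and makes the role of $\ell^{1,2}(\Z)$ transparent. The paper's route avoids negative-half-line growth altogether.

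One precision: Lemma \ref{lemWiener} needs an $L^1$ Fourier transform, and Wiener's lemma fails in the full measure algebra (Wiener--Pitt phenomenon). Saying the localized transform is "in $\mathcal M$" is therefore not enough as stated. It is harmless here, because after multiplying by a smooth cutoff $\tilde\chi$ one has $\mathcal F(\tilde\chi F)=\mathcal F(\tilde\chi)\ast\mathcal F(F)\in L^1(\R)$ whenever $\mathcal F(F)\in\mathcal M$. You should state this explicitly.
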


The proof relies on the properties of the Jost solutions. Let $\lambda \in \C^+ \cup \R$ and $x \in \R$. We consider $\operatorname{m}_\pm (\lambda, x)$ introduced in \eqref{eqm+-}. The following lemma states that the function $\operatorname{m}_\pm (\cdot, x) - 1$ belongs to the Hardy space on $\C^+$.

\begin{lemma} \label{lemHardy}
    Assume that $\alpha \in \ell^1 (\Z)$. Let $x \in \R$. Then, $\operatorname{m}_\pm (\cdot, x) - 1 \in \mathcal H (\C^+)$.
\end{lemma}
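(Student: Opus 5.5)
The plan is to establish the two defining properties of $\mathcal H(\C^+)$ for $F_x(\lambda) \coloneqq \operatorname{m}_+(\lambda,x)-1$ directly from the iterated series \eqref{eqm+3}: holomorphy on $\C^+$, and a uniform bound $\sup_{y>0}\int_\R |F_x(\sigma+iy)|^2\,d\sigma<\infty$. I treat $\operatorname{m}_+$ only; the case of $\operatorname{m}_-$ follows by the reflection $j\mapsto -j$, $x\mapsto -x$ using \eqref{eqM-2}. The only kernel estimate I intend to use is the elementary one valid for $\operatorname{Im}\lambda\ge 0$ and $d=j-x>0$:
\begin{equation*}
    \left|\frac{e^{2i\lambda d}-1}{2i\lambda}\right| \le \min\left(d,\frac{1}{|\lambda|}\right),
\end{equation*}
where the first bound is \eqref{eqBound4} and the second holds because $|e^{2i\lambda d}|\le 1$.

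\emph{Step 1 (holomorphy and the large-$|\lambda|$ regime).} In \eqref{eqm+3} the $n$-th term is a sum over ordered tuples $x<j_1<\dots<j_n$. Using the bound $1/|\lambda|$ on each factor, this term is at most $(\|\alpha\|_{\ell^1(\Z)}/|\lambda|)^n/n!$. Hence the series converges locally uniformly on $\C^+$, so $F_x$ is holomorphic there, since each partial sum is a finite combination of entire functions divided by powers of $\lambda$. The same estimate gives
\begin{equation*}
    |F_x(\lambda)|\le e^{\|\alpha\|_{\ell^1(\Z)}/|\lambda|}-1\lesssim |\lambda|^{-1}
\end{equation*}
for $|\lambda|\ge R \coloneqq \|\alpha\|_{\ell^1(\Z)}$, uniformly in $x$ and in $\operatorname{Im}\lambda$. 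Thus the part of the $L^2$ integral over $\{|\sigma+iy|\ge R\}$ is bounded by $\int_{|\sigma|\ge R}\sigma^{-2}\,d\sigma$ uniformly in $y>0$. For $y\ge R$ this already covers the whole line.

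\emph{Step 2 (the bounded region $|\lambda|\le R$, $\lambda\in\overline{\C^+}$).} Since this region has bounded area, it suffices to show that $F_x$ is bounded there, with a bound allowed to depend on $x$. Here I use only $\alpha\in\ell^1(\Z)$, with $x$ fixed. Choose $J>x$ so large that $\sum_{j>J}|\alpha_j|\le 1/(4R)$. I then split each chain $x<j_1<\dots<j_n$ into the indices $j_l\le J$, of which there are at most $J-x$ and which lie in a finite set, and the tail indices $j_l>J$.
\begin{itemize}
    \item For the finitely many factors with $j_l\le J$, I use the kernel bound $d\le J-x$.
    \item For the tail factors, the gaps $j_l-j_{l-1}$ are unbounded and neither kernel bound is summable against $\ell^1$ near $\lambda=0$.
\end{itemize}
The tail is therefore where I expect the whole difficulty to lie.

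My first attempt for the tail is to avoid the raw Born series. Instead I would use the Volterra equation \eqref{eqM+2} restricted to $j>J$, which gives $\operatorname{m}_+(\lambda,\cdot)$ on $(J,\infty)$, and combine it with the cancellation identity
\begin{equation*}
    \frac{e^{2i\lambda d}-1}{2i\lambda}=\int_0^d e^{2i\lambda s}\,ds .
\end{equation*}
The idea is to integrate by parts in $s$ and absorb the growth in $d$ into the oscillation/decay of $e^{2i\lambda s}$. Once $\operatorname{m}_+(\lambda,j)$ is bounded for $j>J$, uniformly for $|\lambda|\le R$, the finitely many remaining steps from $J$ down to $x$ are the transfer matrices $M_j(\lambda)$ of Lemma \ref{lemMatrix}. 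These are entire in $\lambda$ after clearing the factor $1/\lambda$, and finitely many of them give a bound depending only on $x$.

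I flag this tail estimate near $\lambda=0$ as the main obstacle. If it cannot be closed under $\ell^1$ alone, the fallback is to obtain the $L^2$ bound on $\{|\lambda|\le R\}$ from the boundary values on $\R^*$, namely the $C^1$ coefficients of Proposition \ref{prpJostSol}, together with a Phragmén–Lindelöf type argument. One would then still need to control the singularity at $\lambda=0$, which is exactly the point where the decay hypothesis on $\alpha$ enters.

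Finally, the two regions together give the uniform $L^2$ bound on horizontal lines, and with holomorphy from Step 1 this yields $F_x\in\mathcal H(\C^+)$.
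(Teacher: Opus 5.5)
Your Step 1 is, in substance, the paper's entire proof: holomorphy plus $|\operatorname{m}_+(\lambda,x)-1|\le e^{\|\alpha\|_{\ell^1(\Z)}/|\lambda|}-1$, after which the paper simply asserts membership in $\mathcal H(\C^+)$. You are right that this is not enough. The lines $\operatorname{Im}\lambda=y$ with $y$ small pass arbitrarily close to $\lambda=0$, where that bound explodes.

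\textbf{The gap.} Your proposal does not supply the missing estimate either. Step 2 stops exactly at the tail near $\lambda=0$, so nothing controls $\sup_{y>0}\int_{|\sigma|\le R}|F_x(\sigma+iy)|^2\,d\sigma$. Moreover, the target you set there is false: $F_x$ need not be bounded near $0$ when only $\alpha\in\ell^1(\Z)$ is assumed. So neither integration by parts nor the Phragm\'en--Lindel\"of fallback can rescue it.
\begin{itemize}
\item On the imaginary axis there is no oscillation to exploit. At $\lambda=iy$ the kernel equals $\frac{1-e^{-2yd}}{2y}$, which increases to $d$ as $y\to0^+$.
\item If $\alpha_j\ge0$, every term of \eqref{eqm+3} at $\lambda=iy$ is nonnegative. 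The $n=1$ term alone gives $F_x(iy)\ge\sum_{j>x}\alpha_j\frac{1-e^{-2y(j-x)}}{2y}\to\sum_{j>x}(j-x)\alpha_j=+\infty$ whenever $\alpha\notin\ell^{1,1}(\Z)$.
\item Your choice $\sum_{j>J}|\alpha_j|\le 1/(4R)$ makes the tail contractive only for $|\lambda|\gtrsim 1/R$, not near $0$.
\end{itemize}
Heuristically the hypothesis is genuinely too weak for the lemma itself. Take $\alpha_j=cj^{-2}$ for $j\ge1$ with $c>3/4$. At low energy the comb behaves like the potential $cx^{-2}$, and $F_x(iy)$ should grow like $y^{1/2-\sqrt{c+1/4}}$. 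That violates the bound $|F(\lambda)|\lesssim(\operatorname{Im}\lambda)^{-1/2}$ satisfied by every $F\in\mathcal H(\C^+)$.

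\textbf{The fix.} The missing ingredient is the first moment $\alpha\in\ell^{1,1}(\Z)$. This is the standing hypothesis for the Jost solutions (Proposition \ref{prpJostSol}), and it holds wherever the lemma is used (Lemma \ref{lemFourier}, Proposition \ref{prpLowEn}). With it, use the other half of your kernel bound, $|(e^{2i\lambda d}-1)/(2i\lambda)|\le d$, in \eqref{eqm+3}. For $x=j_0<j_1<\dots<j_n$ one has $j_l-j_{l-1}\le(1+|x|)(1+|j_l|)$, so
\[
\sup_{\lambda\in\C^+}|F_x(\lambda)|\le\exp\Bigl((1+|x|)\sum_{j\in\Z}(1+|j|)|\alpha_j|\Bigr).
\]
This is a crude form of Lemma \ref{lemBound}(1). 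Combined with your Step 1 it gives $|F_x(\lambda)|\lesssim_x\min(1,|\operatorname{Re}\lambda|^{-1})$ on $\C^+$. Hence $\sup_{y>0}\int_\R|F_x(\sigma+iy)|^2\,d\sigma\lesssim_x\int_\R\min(1,\sigma^{-2})\,d\sigma<\infty$. No transfer-matrix or Phragm\'en--Lindel\"of argument is needed.
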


\begin{proof}
    By Proposition \ref{prpJostSol} and the expression of $\operatorname{m}_\pm (\cdot, x)$ given by \eqref{eqM+1}, we obtain that $\operatorname{m}_+ (\cdot, x) - 1$ is holomorphic in $\C^+$. Let $\lambda \in \C^+$, we have
    \begin{align*}
        \left| \operatorname{m}_+ (\lambda, x) - 1 \right|
        & \leq \sum_{n = 1}^\infty \left( \frac{1}{2 |\lambda|} \right)^n \sum_{x < j_1 < \ldots < j_n} \prod_{l = 1}^n 2 \left| \alpha_{j_l} \right| \\
        & \leq \sum_{n = 1}^\infty \frac{1}{n!} \left( \frac{\| \alpha \|_{\ell^1 (\Z)}}{|\lambda|} \right)^n \\
        & \leq \operatorname{exp} \left( \frac{\| \alpha \|_{\ell^1 (\Z)}}{|\lambda|} \right) - 1,
    \end{align*}
    where we used \eqref{eqm+3}; $|e^{2 i \lambda (j_1 -x)}| \leq 1$ and $|e^{2 i \lambda (j_l - j_{l-1})}| \leq 1$, as $\operatorname{Im} (\lambda) > 0$; and $j_l - j_{l-1} > 0$. Hence,
    \begin{equation*}
        \left| \operatorname{m}_+ (\lambda, x) - 1 \right|
        = O \left( |\lambda|^{-1} \right) \text{ as } |\lambda|
        \to \infty \text{ in } \C^+.
    \end{equation*}
    This suffices to conclude that $\operatorname{m}_+ (\cdot, x) - 1 \in \mathcal H (\C^+)$. We prove similarly that $\operatorname{m}_- (\cdot, x) - 1 \in \mathcal H (\C^+)$, hence the proof.
\end{proof}

The Fourier transform of $\lambda \in \R \mapsto \operatorname{m}_\pm (\lambda, x) - 1$, denoted by $\xi \in \R \mapsto \mathcal{F} (\operatorname{m}_\pm (\lambda, x) - 1) (\xi)$, is a complex measure in the distribution sense. By Lemma \ref{lemHardy} and the Hardy space properties, it is supported in $\R^+$. By Paley-Wiener theory, we obtain that $\mathcal{F} (\operatorname{m}_\pm (\lambda, x) - 1) \in L^2 (\R^+)$ such that
\begin{equation}
    \operatorname{m}_\pm (\lambda, x) - 1 = \int_0^\infty \mathcal{F} (\operatorname{m}_\pm (\lambda, x) - 1) (\xi) e^{2 i \lambda \xi} \, d \xi,
\end{equation}
where we recall that, by abuse of notation, we denote by $\xi \mapsto \mathcal{F} (g (\lambda, x)) (\xi)$ the Fourier transform in $\lambda$ of the parametrized function $g (\cdot, x): \lambda \mapsto g (\lambda, x)$.

The following lemma shows that the total variation norm of $\mathcal{F} (\operatorname{m}_\pm (\lambda, x) - 1)$ is in fact bounded. It was proved in \cite[Lemma $3$]{DeTr79}, in the case of a potential $V$ which is a function.

\begin{lemma} \label{lemFourier}
    Let $j \in \{ 1, 2 \}$ and assume that $\alpha \in \ell^{1, j} (\Z)$. Let $x \in \R$, then
    \begin{align}
        & \sup_{\pm x \geq 0} \left\| \xi^{j-1} \mathcal{F} (\operatorname{m}_\pm (\lambda, x) - 1) (\xi) \right\|_{\mathcal{M}} < \infty, \label{eqFourier1} \\
        & \sup_{\pm x \geq 0} \left\| \xi^{j-1} \partial_x \mathcal{F} (\operatorname{m}_\pm (\lambda, x) - 1) (\xi) \right\|_{\mathcal M} < \infty, \label{eqFourier2} \\
        & \sup_{\pm x \geq 0} \left\| \xi^{j-1} \partial_\xi \mathcal{F} (\operatorname{m}_\pm (\lambda, x) - 1) (\xi) \right\|_{\mathcal M} < \infty. \label{eqFourier3}
    \end{align}
    Furthermore, let $\tilde{\chi}$ be a smooth and compactly supported cut-off; and $\mathcal{F} (\tilde{\chi})$ be its Fourier transform. Then,
    \begin{align}
        & \sup_{\pm x \geq 0} \left\| \xi^{j-1} \left( \mathcal{F} (\tilde{\chi}) \ast \mathcal{F} (\operatorname{m}_\pm (\lambda, x) - 1) \right) (\xi) \right\|_{\mathcal M} < \infty, \\
        & \sup_{\pm x \geq 0} \left\| \xi^{j-1} \partial_x \left( \mathcal{F} (\tilde{\chi}) \ast \mathcal{F} (\operatorname{m}_\pm (\lambda, x) - 1) \right) (\xi) \right\|_{\mathcal M} < \infty, \\
        & \sup_{\pm x \geq 0} \left\| \xi^{j-1} \partial_\xi \left( \mathcal{F} (\tilde{\chi}) \ast \mathcal{F} (\operatorname{m}_\pm (\lambda, x) - 1) \right) (\xi) \right\|_{\mathcal M} < \infty.
    \end{align}
\end{lemma}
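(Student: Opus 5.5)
We follow the strategy of \cite[Lemma 3]{DeTr79}, adapted to the discrete setting: expand $\operatorname{m}_+ (\cdot, x) - 1$ via the iterated series \eqref{eqm+3}, compute the Fourier transform in $\lambda$ of each term explicitly, and sum the resulting total variation norms. We treat $\operatorname{m}_+$ with $x \geq 0$; $\operatorname{m}_-$ with $x \leq 0$ is symmetric. The starting point is the identity
\begin{equation*}
    \frac{e^{2 i \lambda a} - 1}{2 i \lambda} = \int_0^{a} e^{2 i \lambda \xi} \, d\xi, \qquad a > 0.
\end{equation*}
In the normalization $\int_0^\infty \mathcal{F}(\cdot)(\xi) e^{2 i \lambda \xi} \, d\xi$, this factor has Fourier transform $\mathbf{1}_{[0, a]}$, with total variation $a$. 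A product of such factors becomes a convolution of indicators. Hence the $n$-th term of \eqref{eqm+3}, indexed by $x < j_1 < \dots < j_n$, has Fourier transform
\begin{equation*}
    \alpha_{j_1} \cdots \alpha_{j_n} \, \mathbf{1}_{[0, j_1 - x]} \ast \mathbf{1}_{[0, j_2 - j_1]} \ast \cdots \ast \mathbf{1}_{[0, j_n - j_{n-1}]},
\end{equation*}
which is a nonnegative function with mass $\prod_l (j_l - j_{l-1})$, supported in $[0, j_n - x]$.

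\emph{Bound \eqref{eqFourier1}.} For $j = 1$ and $x \geq 0$, we have $j_l - j_{l-1} \leq j_l$. The $\mathcal M$-norm of the $n$-th term is therefore at most $\sum_{x<j_1<\dots<j_n} \prod_l |j_l \alpha_{j_l}| \leq \|\alpha\|_{\ell^{1,1}}^n / n!$. Summing over $n$ gives the bound $e^{\|\alpha\|_{\ell^{1,1}}} - 1$, uniformly in $x \geq 0$; this also justifies exchanging the Fourier transform with the series. For $j = 2$, the extra weight $\xi$ is bounded by $j_n - x \leq j_n$ on the support. This costs one extra factor $j_n$, absorbed by $\alpha \in \ell^{1,2}$: the sum is controlled by $\|\alpha\|_{\ell^{1,2}} \|\alpha\|_{\ell^{1,1}}^{n-1} / (n-1)!$, which is still summable.

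\emph{Bounds \eqref{eqFourier2} and \eqref{eqFourier3}.} Differentiating in $x$ only affects the first factor $\mathbf{1}_{[0, j_1 - x]}$. Its $x$-derivative is $-\delta_{j_1 - x}$, of mass $1$, so the weight $j_1$ is replaced by $1$ and the same summation applies. An alternative is to differentiate \eqref{eqM+2} directly in $x$, which gives $\partial_x \operatorname{m}_+ = -\sum_{j>x} \alpha_j e^{2 i \lambda (j - x)} \operatorname{m}_+(\lambda, j)$; its transform is a sum of shifted copies of $\delta$ plus $\mathcal F(\operatorname{m}_+(\lambda,j)-1)$, bounded by the first part. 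For the $\xi$-derivative, differentiate the convolution on one factor with $\partial_\xi \mathbf{1}_{[0,a]} = \delta_0 - \delta_a$. This is a measure of mass $2$, which again lowers one weight $j_l$ to a constant, and the summation is as before.

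\emph{Cut-off statements.} The convolved statements follow because $\mathcal F(\tilde\chi)$ is Schwartz and in particular lies in $L^1$. Then $\|\mu \ast \mathcal F(\tilde\chi)\|_{\mathcal M} \leq \|\mu\|_{\mathcal M} \|\mathcal F(\tilde\chi)\|_{L^1}$, and derivatives can be put on either factor. For the weight $\xi$, use $\xi = (\xi - \eta) + \eta$ to split it between $|\eta|\, \mathcal F(\tilde\chi)$ (integrable) and $\eta\,\mu$, which is controlled by the $j = 2$ bounds.

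\emph{Main obstacle.} The delicate point is rigor rather than computation: the identification in Lemma \ref{lemHardy} is only in $L^2$, so we must justify that the Fourier transform of the series equals the series of measures. I would handle this by showing convergence of the partial sums in $\mathcal M$ and, at the same time, their pointwise convergence as functions of $\lambda$. Uniqueness of Fourier–Stieltjes transforms then identifies the limit with $\mathcal F(\operatorname{m}_+ - 1)$.
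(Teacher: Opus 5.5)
Your proposal is correct and takes essentially the same route as the paper: expand $\operatorname{m}_+(\cdot,x)-1$ via \eqref{eqm+3}, identify the transform of each term as a convolution of indicators with mass $\prod_l (j_l - j_{l-1})$, let the $x$- and $\xi$-derivatives fall on a single indicator, and sum the resulting total variation bounds. The differences are minor. You bound the weight $\xi$ by the support bound $\xi \le j_n$, where the paper runs an iterated first-moment computation. You also spell out the cut-off estimates and the identification of the Fourier transform of the series as a measure, both of which the paper leaves implicit.
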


\begin{proof}
    Let $\xi \geq 0$. By computing the Fourier transform of $\operatorname{m}_+ (\cdot, x) - 1$ using formula \eqref{eqm+3}, we obtain that
    \begin{equation*}
        \mathcal{F} (\operatorname{m}_+ (\lambda, x) - 1) (\xi)
        = \sum_{n = 1}^\infty \pi^n \sum_{x < j_1 < \ldots < j_n} \left( \prod_{l = 1}^n \alpha_{j_l} \right) P_{j_1, \ldots, j_n} (\xi, x)
    \end{equation*}
    where, for $\xi \in \R$,
    \begin{equation*}
        P_{j_1, \ldots, j_n} (\xi, x)
        = \left( \boldsymbol{1}_{[0, j_1 - x]} \ast \boldsymbol{1}_{[0, j_2 - j_1]} \ast \cdots \ast \boldsymbol{1}_{[0, j_n - j_{n-1}]} \right) (\xi).
    \end{equation*}
    In particular, $P_{j_1, \ldots, j_n} (\xi, x) = 0$ if $\xi < 0$. We divide the rest of the proof in two steps: the first treats the case $\alpha \in \ell^{1, 1} (\Z)$ while the other addresses the case $\alpha \in \ell^{1, 2} (\Z)$. In both steps, we assume $0 \leq x = j_0 < j_1 < \ldots < j_n$.

    \medskip
    
    \noindent \emph{Step $1$.} Assume that $\alpha \in \ell^{1, 1} (\Z)$. We have
    \begin{align*}
        \int_0^\infty P_{j_1, \ldots, j_n} (\xi, x) \, d \xi
        & = \int_0^\infty \int_{-\infty}^\infty \boldsymbol{1}_{[0, j_1 - x]} (\xi - y) P_{j_2, \ldots, j_n} (y, j_1) \, dy \, d \xi \\
        & = \int_{-\infty}^\infty \int_0^\infty \boldsymbol{1}_{[0, j_1 - x]} (\xi - y) \, d \xi P_{j_2, \ldots, j_n} (y, j_1) \, dy \\
        & = \left( j_1 - x \right) \int_{-\infty}^\infty P_{j_2, \ldots, j_n} (y, j_1) \, dy,
    \end{align*}
    where we used Fubini's theorem. Thus, by induction, we obtain
    \begin{equation} \label{eqFourier9}
        \int_0^\infty |P_{j_1, \ldots, j_n} (\xi, x)| \, d \xi
        \leq \prod_{l = 1}^n \left( j_l - j_{l-1} \right)
        \leq \prod_{l = 1}^n j_l.
    \end{equation}
    Hence,
    \begin{equation} \label{eqFourier4}
        \begin{aligned}
            \sup_{x \geq 0} \| \mathcal{F} (\operatorname{m}_+ (\lambda, x) - 1) \|_{\mathcal M}
            & \leq \sum_{n = 1}^\infty \pi^n \sum_{x < j_1 < \ldots < j_n} \prod_{l = 1}^n |j_l \alpha_{j_l}| \\
            & \leq \sum_{n = 1}^\infty \frac{(\pi \| \alpha \|_{\ell^{1, 1} (\Z)})^n}{n!}
            \leq e^{\pi \| \alpha \|_{\ell^{1, 1} (\Z)}}
            < \infty.
        \end{aligned}
    \end{equation}
    Furthermore, we have
    \begin{multline} \label{eqFourier7}
        \partial_x \left( \sum_{x < j_1 < \ldots < j_n} \left( \prod_{l = 1}^n \alpha_l \right) P_{j_1, \ldots, j_n} (\xi, x) \right)
        = \sum_{x < j_1 < \ldots < j_n} \left( \prod_{l = 1}^n \alpha_l \right) \\
        \times \left( \partial_x P_{j_1, \ldots, j_n} (\xi, x) - P_{j_1, \ldots, j_n} (\xi, x) \delta (x - j_1) \right)
    \end{multline}
    and
    \begin{equation} \label{eqFourier8}
        \partial_\xi \left( \sum_{x < j_1 < \ldots < j_n} \left( \prod_{l = 1}^n \alpha_l \right) P_{j_1, \ldots, j_n} (\xi, x) \right)
        = \sum_{x < j_1 < \ldots < j_n} \left( \prod_{l = 1}^n \alpha_l \right) \partial_\xi P_{j_1, \ldots, j_n} (\xi, x),
    \end{equation}
    with
    \begin{gather*}
        \partial_x P_{j_1, \ldots, j_n} (\xi, x)
        = P_{j_2, \ldots, j_n} (\xi - (j_1 - x), j_1), \\
        \partial_\xi P_{j_1, \ldots, j_n} (\xi, x)
        = P_{j_2, \ldots, j_n} (\xi, j_1) - P_{j_2, \ldots, j_n} (j_1 - x - \xi, j_1).
    \end{gather*}
    Thus,
    \begin{gather*}
        \int_0^\infty |\partial_x P_{j_1, \ldots, j_n} (\xi, x)| \, d \xi + \| P_{j_1, \ldots, j_n} (\xi, \cdot) \delta (\cdot - j_1) \|_{\mathcal{M}}
        \leq 2 \prod_{l = 1}^n |j_l|, \\
        \int_0 ^\infty \left| \partial_\xi P_{j_1, \ldots, j_n} (\xi, x) \right| \, d \xi
        \leq 2 \prod_{l = 1}^n |j_l|.
    \end{gather*}
    Therefore,
    \begin{align}
        & \sup_{x \geq 0} \| \partial_x \mathcal{F} (\operatorname{m}_+ (\lambda, x) - 1) \|_{\mathcal{M}}
        \leq 2 \sum_{n = 1}^\infty \pi^n \sum_{x < j_1 < \ldots < j_n} \prod_{l = 1}^n |j_l \alpha_{j_l}|
        \leq 2 e^{\pi \| \alpha \|_{\ell^{1, 1} (\Z)}}
        < \infty, \label{eqFourier5} \\
        & \sup_{x \geq 0} \| \partial_\xi \mathcal{F} (\operatorname{m}_+ (\lambda, x) - 1) \|_{\mathcal{M}}
        \leq 2 \sum_{n = 1}^\infty \pi^n \sum_{x < j_1 < \ldots < j_n} \prod_{l = 1}^n |j_l \alpha_{j_l}|
        \leq 2 e^{\pi \| \alpha \|_{\ell^{1, 1} (\Z)}}
        < \infty. \label{eqFourier6}
    \end{align}
    We obtain identical bounds as \eqref{eqFourier4} (respectively \eqref{eqFourier5} and \eqref{eqFourier6}) for $\mathcal{F} (\operatorname{m}_- (\lambda, x) - 1)$ and $\mathcal{F} (\tilde{\chi}) \ast \mathcal{F} (\operatorname{m}_\pm (\lambda, x) - 1)$, proving the other estimates for $j = 1$. This concludes the first step of the proof.

    \medskip
    
    \noindent \emph{Step $2$.} Assume that $\alpha \in \ell^{1, 2} (\Z)$. We have
    \begin{align*}
        \int_0^\infty \xi P_{j_1, \ldots, j_n} (\xi, x) \, d \xi
        & = \int_0^\infty \xi \int_{-\infty}^\infty \boldsymbol{1}_{[0, j_1 - x]} (\xi - y) P_{j_2, \ldots, j_n} (y, j_1) \, dy \, d \xi \\
        & = \int_{-\infty}^\infty \int_0^\infty \xi \boldsymbol{1}_{[0, j_1 - x]} (\xi - y) \, d \xi P_{j_2, \ldots, j_n} (y, j_1) \, dy \\
        & = \int_{-\infty}^\infty \frac{1}{2} \left( (y + j_1 - x)^2 - y_+^2 \right)_+ P_{j_2, \ldots, j_n} (y, j_1) \, dy \\
        & = \frac{(j_1 - x)^2}{2} \int_{0}^\infty P_{j_2, \ldots, j_n} (y, j_1) \, dy + (j_1 - x) \int_{0}^\infty y P_{j_2, \ldots, j_n} (y, j_1) \, dy,
    \end{align*}
    where we used Fubini's theorem and the fact that $P_{j_2, \ldots, j_n} (\cdot, j_1)$ is supported in $\R_+$. Iterating the previous computation, we obtain
    \begin{multline*}
        2 \int_0^\infty \xi P_{j_1, \ldots, j_n} (\xi, x) \, d \xi
        = (j_1 - x)^2 \int_{0}^\infty P_{j_2, \ldots, j_n} (y, j_1) \, dy \\
        + (j_1 - x) (j_2 - x)^2 \int_{0}^\infty P_{j_3, \ldots, j_n} (y, j_2) \, dy \\
        + \ldots + (j_1 - x) (j_2 - x) \cdots (j_{n-2} - x) (j_{n-1} - x)^2 \int_{0}^\infty P_{j_n} (y, j_{n-1}) \, dy \\
        + 2 (j_1 - x) (j_2 - x) \cdots (j_{n-2} - x) (j_{n-1} - x) \int_0^\infty y P_{j_n} (y, j_{n-1}) \, dy.
    \end{multline*}
    Using \eqref{eqFourier9} and the fact that
    \begin{equation*}
        \int_0^\infty y P_{j_n} (y, j_{n-1}) \, dy
        = \frac{(j_n - j_{n-1})^2}{2},
    \end{equation*}
    we obtain
    \begin{align*}
        \int_0^\infty \left| \xi P_{j_1, \ldots, j_n} (\xi, x) \right| \, d \xi
        & \lesssim \sum_{p = 1}^n j_p - x \prod_{l = 1}^n (j_l - x) \\
        & \leq \sum_{p = 1}^n j_p \prod_{l = 1}^n j_l
        \leq n \prod_{l = 1}^n j_l^2.
    \end{align*}
    Hence,
    \begin{align*}
        \sup_{x \geq 0} \| \xi \mathcal{F} (\operatorname{m}_+ (\lambda, x) - 1) (\xi) \|_{\mathcal M}
        & \leq \sum_{n = 1}^\infty n \pi^n \sum_{x < j_1 < \ldots < j_n} \prod_{l = 1}^n |j_l^2 \alpha_{j_l}| \\
        & \leq \sum_{n = 1}^\infty \frac{(\pi \| \alpha \|_{\ell^{1, 2}})^n}{(n - 1)!} \\
        & \leq \pi \| \alpha \|_{\ell^{1, 2} (\Z)} e^{\pi \| \alpha \|_{\ell^{1, 2} (\Z)}}
        < \infty.
    \end{align*}
    Using the above bound and \eqref{eqFourier7} (respectively \eqref{eqFourier8}), we prove \eqref{eqFourier2} (respectively \eqref{eqFourier3}).
    The other estimates are proved similarly. This concludes the proof.
\end{proof}

We deduce the following lemma.

\begin{lemma} \label{lemFBound}
    Let $j \in \{ 1, 2 \}$ and assume that $\alpha \in \ell^{1, j} (\Z)$. Let $\tilde{\chi}$ be a smooth and compactly supported cut-off. Then, $\mathcal{F} (\tilde{\chi} \operatorname{W})$ and $\mathcal{F} ( W (\operatorname{f}_+ (\lambda, x), \operatorname{f}_- (-\lambda, x)))$ (where the Fourier transform is taken in $\lambda$) belongs to $L^{1, j-1} (\R)$.
\end{lemma}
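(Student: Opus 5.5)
We express both Wronskians through the functions $\operatorname{m}_\pm$ and evaluate them at the single point $x=0$. There Lemma \ref{lemFourier} gives measure bounds for both $\operatorname{m}_+(\cdot,0)$ and $\operatorname{m}_-(\cdot,0)$, since $0$ lies in both ranges $\pm x\ge 0$. By Lemma \ref{lemWskDelta} the Wronskian is independent of $x$, so evaluating at $x=0$ costs nothing. Using \eqref{eqm+-} we write $\operatorname{f}_\pm(\lambda,x)=e^{\pm i\lambda x}\operatorname{m}_\pm(\lambda,x)$. At $x=0^+$, a point of $(0,1)$ where all functions are smooth, this gives
\begin{equation*}
    \operatorname{W}(\lambda)
    = -2i\lambda\,\operatorname{m}_+(\lambda,0)\operatorname{m}_-(\lambda,0)
    + \operatorname{m}_+(\lambda,0)\partial_x\operatorname{m}_-(\lambda,0)
    - \partial_x\operatorname{m}_+(\lambda,0)\operatorname{m}_-(\lambda,0).
\end{equation*}
Similarly, $W(\operatorname{f}_+(\lambda,\cdot),\operatorname{f}_-(-\lambda,\cdot))=e^{2i\lambda x}\,[\cdots]$. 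At $x=0$ the exponential factor disappears, and what remains is the same combination with $\operatorname{m}_-(-\lambda,0)$ in place of $\operatorname{m}_-(\lambda,0)$, together with the sign change on the term $-2i\lambda\operatorname{m}\operatorname{m}$.

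Next we identify each factor, as a function of $\lambda$, as the Fourier transform of a finite measure. Write $\operatorname{m}_\pm(\lambda,0)=1+\widehat{\nu_\pm}$, with $\nu_\pm=\mathcal F(\operatorname{m}_\pm(\lambda,0)-1)$. By Lemma \ref{lemFourier}, \eqref{eqFourier1} and \eqref{eqFourier2}, the measures $\nu_\pm$ and $\partial_x\nu_\pm$ have finite total variation, with finite first moment when $j=2$. For the derivatives, note that $\partial_x\operatorname{m}_\pm(\lambda,0)$ has no constant term, so it is the transform of $\partial_x\nu_\pm$. Replacing $\lambda$ by $-\lambda$ only reflects the measure in $\xi$, which preserves both the total variation and the weight $|\xi|^{j-1}$.

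Products in $\lambda$ become convolutions on the Fourier side. Hence the Fourier transform of each Wronskian, multiplied by $\tilde\chi$, is a finite sum of convolutions of the form $\mathcal F(\tilde\chi)\ast\mu_1\ast\mu_2$, where each $\mu_i$ is either $\delta$, $\nu_\pm$ or $\partial_x\nu_\pm$. The norm inequality $\|\mu\ast\nu\|_{\mathcal M}\le\|\mu\|_{\mathcal M}\|\nu\|_{\mathcal M}$ controls the total variation. For the weight we use $\langle\xi\rangle\le\langle\xi-\eta\rangle\langle\eta\rangle$, which shows that the space of measures with finite weighted norm $\int\langle\xi\rangle^{j-1}d|\mu|$ is an algebra under convolution.

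The factor $\lambda$ in the term $-2i\lambda\operatorname{m}\operatorname{m}$ is not bounded, which is why we need $\tilde\chi$. We absorb it as $\lambda\tilde\chi(\lambda)$, whose Fourier transform is a Schwartz function, so this term has the same structure as the others.

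Finally, the result must be an $L^1$ function and not merely a measure. Every term contains the Schwartz factor $\mathcal F(\tilde\chi)$ or $\mathcal F(\lambda\tilde\chi)$, and a Schwartz function convolved with a finite weighted measure is an $L^1$ function with the same weight. This places $\mathcal F(\tilde\chi\operatorname{W})$ in $L^{1,j-1}$.

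The statement writes the second Wronskian without a cutoff. We would read it with the same cutoff $\tilde\chi$ implicit, since without one the term $\lambda\operatorname{m}\operatorname{m}$ is unbounded and its transform cannot lie in $L^1$. Alternatively, the term $-2i\lambda$ coming from $\operatorname{m}\operatorname{m}=1+\dots$ could be set aside as a distribution.

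The main obstacle we expect is justifying that $\partial_x\operatorname{m}_\pm(\lambda,0^\pm)$ really is the transform of the measure $\partial_x\nu_\pm$ of \eqref{eqFourier2}. That measure contains the $\delta(x-j_1)$ contributions from \eqref{eqFourier7}, and taking one-sided limits at the integer point $x=0$ has to be handled carefully. To avoid this we would instead evaluate at an interior point, say $x=1/2$. This is allowed because the Wronskian is constant, and at $x=1/2$ the bounds of Lemma \ref{lemFourier} for $\operatorname{m}_-$ are still available: the explicit series for $\operatorname{m}_-$ involves only indices $j<1/2$, giving a bound of order $\prod(1+|j_l|)$. This yields the same estimates up to constants.
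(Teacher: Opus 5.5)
Your proposal is correct and follows the paper's proof: you express both Wronskians through $\operatorname{m}_\pm$ at a single point and turn products into convolutions of the measures controlled by Lemma~\ref{lemFourier}. Your remarks on the cutoff, the weighted convolution algebra, the Schwartz factor that upgrades measures to $L^1$ functions, and evaluating at $x=1/2$ instead of the integer $0$ fill in details the paper leaves implicit.

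One slip: in $W(\operatorname{f}_+(\lambda,\cdot),\operatorname{f}_-(-\lambda,\cdot))$ the two $i\lambda$ terms cancel exactly. What remains, as in the paper's formula, is $e^{2i\lambda x}\bigl(\operatorname{m}_+(\lambda,x)\partial_x\operatorname{m}_-(-\lambda,x)-\partial_x\operatorname{m}_+(\lambda,x)\operatorname{m}_-(-\lambda,x)\bigr)$, with no $\lambda\operatorname{m}\operatorname{m}$ term. So the real reason a cutoff is needed to get an $L^1$ function, rather than only a finite weighted measure, is different: $\partial_x\operatorname{m}_\pm$ contain almost periodic parts such as $-\sum_{j>x}\alpha_j e^{2i\lambda(j-x)}$, whose Fourier transforms are point masses.
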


\begin{proof}
    For any $\lambda \in \R$, we have
    \begin{equation*}
        \tilde{\chi} (\lambda) \operatorname{W} (\lambda)
        = \tilde{\chi} (\lambda) \left( \operatorname{m}_+ (\lambda, 0) \partial_x \operatorname{m}_- (\lambda, 0) - \partial_x \operatorname{m}_+ (\lambda, 0) \operatorname{m}_- (\lambda, 0) \right) - 2 i \lambda \chi (\lambda) \operatorname{m}_+ (\lambda, 0) \operatorname{m}_- (\lambda, 0)
    \end{equation*}
    and
    \begin{equation*}
        W \left( \operatorname{f}_+ (\lambda, x), \operatorname{f}_- (-\lambda, x) \right)
        = \operatorname{m}_+ (\lambda, 0) \partial_x \operatorname{m}_- (\lambda, 0) - \partial_x \operatorname{m}_+ (\lambda, 0) \operatorname{m}_- (-\lambda, 0).
    \end{equation*}
    Thus, the Fourier transform of those functions will be a convolution product of functions belonging to $L_{(j-1)}^1 (\R)$ by Lemma \ref{lemFourier}. This concludes the proof.
\end{proof}

The proof of Proposition \ref{prpLowEn} relies on the Wiener's lemma stated below. See e.g. \cite[Chapter VIII, Lemma $3$]{Ka04} for its proof.

\begin{lemma}[Wiener's lemma] \label{lemWiener}
    Let $f: \R \to \C$ be a function such that $\mathcal{F} (f) \in L^1 (\R)$ and let $\chi \in C_c^\infty (\R)$ be a cut-off. Assume that $f (x) \neq 0$ for all $x \in \operatorname{supp} (\chi)$. Then, $\mathcal{F} \left( (\chi f)^{-1} \right) \in L^1 (\R)$.
\end{lemma}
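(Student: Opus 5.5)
The statement to prove is Wiener's lemma, Lemma \ref{lemWiener}. The natural framework is the Banach algebra $\mathcal{A} = \{ g : \mathcal{F}(g) \in L^1 (\R) \}$ with norm $\| g \|_{\mathcal{A}} = \| \mathcal{F} (g) \|_{L^1 (\R)}$. Together with constants, this is the unitized algebra $\mathcal{M}$-transform algebra, whose maximal ideal space is known. We use its local version: a function in $\mathcal{A}$ that does not vanish near a point admits a local inverse in $\mathcal{A}$. The inverse of $\chi f$ only makes sense on $\operatorname{supp} (\chi)$. We therefore prove the precise form: there exists $g \in \mathcal{A}$ with $g \, f = 1$ on a neighbourhood of $\operatorname{supp} (\chi)$. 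The product $\chi g$ then belongs to $\mathcal{A}$ and equals $\chi / f$ there; this is the quantity used later.

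The first step is local inversion near a point $x_0 \in \operatorname{supp} (\chi)$. Since $f \in \mathcal{A}$, $f$ is continuous and $f (x_0) \neq 0$.
\begin{itemize}
\item Take a bump $\psi_\eps (x) = \psi ((x - x_0)/\eps)$ with $\psi = 1$ near $0$.
\item Write $f = f (x_0) \bigl( 1 + h \bigr)$ near $x_0$, where $h = (f - f (x_0))/f (x_0)$.
\item The key estimate is $\| \psi_\eps \, h \|_{\mathcal{A}} \to 0$ as $\eps \to 0$.
\end{itemize}
This estimate is the main obstacle. We would prove it first for $f$ with $\mathcal{F} (f)$ compactly supported, hence smooth. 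There $\psi_\eps h$ is a dilated bump times a function vanishing at $x_0$, and a direct computation gives $\| \psi_\eps h \|_{\mathcal{A}} \lesssim \eps$. Note that $\| \psi_\eps \|_{\mathcal{A}}$ is scale invariant, and the vanishing factor contributes $O (\eps)$. For general $f$, approximate $\mathcal{F} (f)$ in $L^1$ by compactly supported functions, using the uniform bound $\| \psi_\eps \|_{\mathcal{A}} \leq C$. With the estimate in hand, a Neumann series gives
\begin{equation*}
  g_{x_0} = f (x_0)^{-1} \sum_{n \geq 0} (- \psi_\eps h)^n \in \mathcal{A}.
\end{equation*}
It satisfies $g_{x_0} f = 1$ on the set where $\psi_\eps = 1$.

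The second step globalizes the local inverses. By compactness, cover $\operatorname{supp} (\chi)$ by finitely many neighbourhoods $U_k$ carrying local inverses $g_k$. Choose a smooth partition of unity $(\phi_k)$, with $\operatorname{supp} (\phi_k) \subset U_k$ and $\sum_k \phi_k = 1$ near $\operatorname{supp} (\chi)$; each $\phi_k$ lies in $\mathcal{A}$ since it is Schwartz. Set $g = \sum_k \phi_k g_k \in \mathcal{A}$. Then $g f = \sum_k \phi_k = 1$ near $\operatorname{supp} (\chi)$. Finally, $\chi g \in \mathcal{A}$ because $\mathcal{A}$ is an algebra, as $\mathcal{F} (uv) = \mathcal{F} (u) \ast \mathcal{F} (v)$ and $L^1$ is closed under convolution. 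This gives $\mathcal{F} (\chi / f) \in L^1 (\R)$, which is the claim.
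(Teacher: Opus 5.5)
Your proof is correct. Note that the paper gives no argument for this lemma: it only cites Katznelson's chapter on commutative Banach algebras \cite{Ka04}, i.e.\ the Gelfand-theory framework for the regular algebra $\mathcal{F}^{-1}(L^1(\R))$. You instead give the classical hands-on proof in Wiener's spirit: the shrinking-bump estimate $\|\psi_\eps h\|_{\mathcal A}\to 0$, local inversion by a Neumann series, and patching with a partition of unity.

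The abstract route is shorter once the machinery is available, and it extends verbatim to other regular algebras. Yours is self-contained: it uses only that $\mathcal A$ is a Banach algebra, that $\|\cdot\|_{\mathcal A}$ is invariant under translation and dilation, the bound $\|u\|_{L^\infty}\le\|u\|_{\mathcal A}$, and density of band-limited functions.

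You also read the conclusion correctly. Literally, $(\chi f)^{-1}$ is meaningless off $\operatorname{supp}(\chi)$. What the proof of Proposition~\ref{prpLowEn} actually uses is $\mathcal F(\chi/f)\in L^1(\R)$, e.g.\ with $f=\tilde\chi\operatorname{W}$ and cut-off $\lambda\mapsto\lambda\chi(\lambda^2)$.

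Two small repairs are needed.

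First, the $n=0$ term of your Neumann series is the constant $1$, whose Fourier transform is $\delta_0$. So $g_{x_0}$ lies in the unitization $\C\oplus\mathcal A$ (Fourier transform in $\C\delta_0+L^1(\R)$), not in $\mathcal A$ as you claim. This is harmless: $\mathcal A$ is an ideal in its unitization, so $\phi_k g_k\in\mathcal A$ and hence $g\in\mathcal A$.

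Second, the ``direct computation'' deserves its one line. For band-limited $f$, write $h(x)=(x-x_0)v(x)$ with $v$ smooth. Assume $\operatorname{supp}(\psi)\subset[-1,1]$ and $\eps\le 1$. Then $\psi_\eps h=\eps\,\tilde\psi\bigl((\cdot-x_0)/\eps\bigr)\,\theta v$, where $\tilde\psi(y)=y\psi(y)$ and $\theta$ is a fixed bump equal to $1$ on $[x_0-1,x_0+1]$. Dilation invariance then gives $\|\psi_\eps h\|_{\mathcal A}\le\eps\|\tilde\psi\|_{\mathcal A}\|\theta v\|_{\mathcal A}$. In the approximation step, the constant term $f_2(x_0)\psi_\eps$ is controlled by $|f_2(x_0)|\le\|\mathcal F(f_2)\|_{L^1(\R)}$ together with your uniform bound on $\|\psi_\eps\|_{\mathcal A}$.
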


We turn to the proof of the low–energy part of the estimate.

\begin{proof}[Proof of Proposition \ref{prpLowEn}]
    Let $x \in \R$. By Theorem \ref{thResKer}, we have
    \begin{multline*}
        e^{i t \H_\alpha} \chi (\H_\alpha) f (x)
        = \frac{1}{\pi i} \int_{-\infty}^\infty \frac{\lambda e^{i t \lambda^2} \chi (\lambda^2)}{\operatorname{W} (\lambda)} \\
        \times \left( \int_{-\infty}^x \operatorname{f}_- (\lambda, y) \operatorname{f}_+ (\lambda, x) f (y) \, dy + \int_x^\infty \operatorname{f}_- (\lambda, x) \operatorname{f}_+ (\lambda, y) f (y) \, dy \right) \, d \lambda.
    \end{multline*}
    Thus, by Fubini's theorem, we obtain
    \begin{multline} \label{eqLowEn4}
        \left| e^{i t \H_\alpha} \chi (\H_\alpha) f (x) \right|
        \lesssim \int_{-\infty}^\infty |f (y)| \left| \int_{-\infty}^\infty \frac{\lambda e^{i t \lambda^2} \chi (\lambda^2)}{\operatorname{W} (\lambda)} \operatorname{f}_- (\lambda, y) \operatorname{f}_+ (\lambda, x) \boldsymbol{1}_{y < x} \, d \lambda \right| \, dy \\
        + \int_{-\infty}^\infty |f (y)| \left| \int_{-\infty}^\infty \frac{\lambda e^{i t \lambda^2} \chi (\lambda^2)}{\operatorname{W} (\lambda)} \operatorname{f}_- (\lambda, x) \operatorname{f}_+ (\lambda, y) \boldsymbol{1}_{y > x} \, d \lambda \right| \, dy.
    \end{multline}
    We divide the rest of the proof in two steps: the first treats the case $\operatorname{W} (0) \neq 0$ while the other addresses the case $\operatorname{W} (0) = 0$ and $\alpha \in \ell^{1, 2} (\Z)$.

    \medskip
    
    \noindent \emph{Step $1$.} Assume that $\operatorname{W} (0) \neq 0$. First, we prove that
    \begin{equation} \label{eqLowEn2}
       \sup_{x < 0 < y} \left| \int_{-\infty}^\infty \frac{\lambda e^{i t \lambda^2} \chi (\lambda^2)}{\operatorname{W} (\lambda)} \operatorname{f}_+ (\lambda, y) \operatorname{f}_- (\lambda, x) \, d \lambda \right|
       \lesssim |t|^{-\frac{1}{2}}.
    \end{equation}
    Let $\tilde{\chi}$ be a smooth, compactly supported cut-off such that $\tilde{\chi} (\lambda) = 1$ for any $\lambda \in \operatorname{supp} (\chi)$. Then,
    \begin{multline*}
        \sup_{x < 0 < y} \left| \int_{-\infty}^\infty \frac{\lambda e^{i t \lambda^2} \chi (\lambda^2)}{\operatorname{W} (\lambda)} \operatorname{f}_+ (\lambda, y) \operatorname{f}_- (\lambda, x) \, d \lambda \right| \\
        = \sup_{x < 0 < y} \left| \int_{-\infty}^\infty e^{i (t \lambda^2 + (x - y) \lambda )} \frac{\lambda \chi (\lambda^2)}{\tilde{\chi} (\lambda) \operatorname{W} (\lambda)} \operatorname{m}_+ (\lambda, y) \operatorname{m}_- (\lambda, x) \, d \lambda \right|.
    \end{multline*}
    We have that $\mathcal{F} (\tilde{\chi} \operatorname{W}) \in L^1 (\R)$ by Lemma \ref{lemFBound} applied with $j = 1$. Furthermore, $\operatorname{W} (\lambda) \neq 0$ for all $\lambda \in \operatorname{supp} (\tilde{\chi})$ by Proposition \ref{prpWronski} and by the assumption $\operatorname{W} (0) \neq 0$. Thus, by Lemma \ref{lemWiener}, we obtain that $\mathcal{F} (\tilde{\chi}^{-1} \operatorname{W}^{-1}) \in L^1 (\R)$. Furthermore, we have that $\mathcal{F} ( \lambda \chi (\lambda^2)) \in L^1 (\R)$, so that $\mathcal{F} ( \lambda \chi (\lambda^2) \tilde{\chi} (\lambda)^{-1} \operatorname{W} (\lambda)^{-1}) \in L^1 (\R)$. We obtain
    \begin{multline*}
        \left\| \mathcal{F} \left( \frac{\lambda \chi (\lambda^2) \operatorname{m}_+ (\lambda, y) \operatorname{m}_- (\lambda, x)}{\tilde{\chi} (\lambda) \operatorname{W} (\lambda)} \right) \right\|_{\mathcal{M}}
        = \left\| \mathcal{F} \left( \frac{\lambda \chi (\lambda^2)}{\tilde{\chi} (\lambda) \operatorname{W} (\lambda)} \right) \ast \mathcal{F} \left( \operatorname{m}_+ (\lambda, y) \right) \ast \mathcal{F} \left( \operatorname{m}_- (\lambda, x) \right) \right\|_{\mathcal{M}} \\
        \leq \left\| \mathcal{F} \left( \frac{\lambda \chi (\lambda^2)}{\tilde{\chi} (\lambda) \operatorname{W} (\lambda)} \right) \right\|_{L^1 (\R)} \left\| \mathcal{F} \left( \operatorname{m}_+ (\lambda, y) \right) \right\|_{\mathcal{M}} \left\| \mathcal{F} \left( \operatorname{m}_- (\lambda, x) \right) \right\|_{\mathcal{M}} \\
        \leq \left\| \mathcal{F} \left( \frac{\lambda \chi (\lambda^2)}{\tilde{\chi} (\lambda) \operatorname{W} (\lambda)} \right) \right\|_{L^1 (\R)} \left( \left\| \mathcal{F} \left( \operatorname{m}_+ (\lambda, y) - 1 \right) \right\|_{\mathcal{M}} + \left\| \delta_0 \right\|_{\mathcal{M}} \right) \\
        \times \left( \left\| \mathcal{F} \left( \operatorname{m}_- (\lambda, x) - 1 \right) \right\|_{\mathcal{M}} + \left\| \delta_0 \right\|_{\mathcal{M}} \right).
    \end{multline*}
    Using Lemma \ref{lemFourier}-\eqref{eqFourier1}, we obtain that
    \begin{equation*}
        \sup_{x < 0 < y} \left\| \mathcal{F} \left( \frac{\lambda \chi (\lambda^2) \operatorname{m}_+ (\lambda, y) \operatorname{m}_- (\lambda, x)}{\tilde{\chi} (\lambda) \operatorname{W} (\lambda)} \right) \right\|_{\mathcal{M}}
        < \infty.
    \end{equation*}
    Thus,
    \begin{equation*}
        \sup_{x < 0 < y} \left| \int_{-\infty}^\infty \frac{\lambda e^{i t \lambda^2} \chi (\lambda^2)}{\operatorname{W} (\lambda)} \operatorname{f}_+ (\lambda, y) \operatorname{f}_- (\lambda, x) \, d \lambda \right|
        \lesssim \left\| e^{i t \H_0} \psi \right\|_{L^\infty (\R)},
    \end{equation*}
    where $\psi = \mathcal{F} ( \lambda \chi (\lambda^2) (\tilde{\chi} (\lambda) \operatorname{W} (\lambda))^{-1} \operatorname{m}_+ (\lambda, y) \operatorname{m}_- (\lambda, x))$, leading the desired bound \eqref{eqLowEn2} by \eqref{eqFLDis}.

    Secondly, we prove that
    \begin{equation} \label{eqLowEn5}
       \sup_{0 \leq x < y} \left| \int_{-\infty}^\infty \frac{\lambda e^{i t \lambda^2} \chi (\lambda^2)}{\operatorname{W} (\lambda)}  \operatorname{f}_+ (\lambda, y) \operatorname{f}_- (\lambda, x)\, d \lambda \right|
       \lesssim |t|^{-\frac{1}{2}}.
    \end{equation}
    We recall (see \eqref{eqCoeff1}) that
    \begin{equation*}
        \operatorname{f}_- (\lambda, x)
        = \operatorname{a}_- (\lambda) \operatorname{f}_+ (\lambda, x) + \operatorname{b} (\lambda) \operatorname{f}_+ (-\lambda, x)
    \end{equation*}
    with
    \begin{equation*}
        \operatorname{a}_- (\lambda)
        = -\frac{W \left( \operatorname{f}_- (\lambda, \cdot), \operatorname{f}_+ (-\lambda, \cdot) \right)}{2 i \lambda}, \quad
        \operatorname{b} (\lambda)
        = -\frac{\operatorname{W} (\lambda)}{2 i \lambda}.
    \end{equation*}
    By Lemma \ref{lemFBound}, we obtain that $\mathcal{F} (\lambda \operatorname{a}_- (\lambda)) \in L^1 (\R)$. Furthermore, we have
    \begin{multline*}
        \left| \int_{-\infty}^\infty e^{i t \lambda^2} \frac{\lambda \chi (\lambda^2)}{\tilde{\chi} (\lambda) \operatorname{W} (\lambda)} \operatorname{f}_+ (\lambda, y) \operatorname{f}_- (\lambda, x) \, d \lambda \right|
        \lesssim \left| \int_{-\infty}^\infty e^{i t \lambda^2} e^{i \lambda (x + y)} \frac{\lambda \operatorname{a}_- (\lambda) \operatorname{m}_+ (\lambda, y) \operatorname{m}_+ (\lambda, x)}{\tilde{\chi} (\lambda) \operatorname{W} (\lambda)} \, d \lambda \right| \\
        + \left| \int_{-\infty}^\infty e^{i t \lambda^2} e^{i t (y - x)} \chi (\lambda) \operatorname{m}_+ (\lambda, y) \operatorname{m}_+ (-\lambda, x) \, d \lambda \right|.
    \end{multline*}
    Taking the supremum over $0 \leq x < y$, we prove \eqref{eqLowEn3} using Lemma \ref{lemWiener}, Lemma \ref{lemFourier} and \eqref{eqFLDis}.
    
    Finally, we obtain by symmetry that
    \begin{equation} \label{eqLowEn3}
       \sup_{x > y} \left| \int_{-\infty}^\infty \frac{\lambda e^{i t \lambda^2} \chi (\lambda^2)}{\operatorname{W} (\lambda)} \operatorname{f}_+ (\lambda, x) \operatorname{f}_- (\lambda, y) \, d \lambda \right|
       \lesssim |t|^{-\frac{1}{2}}.
    \end{equation}
    Injecting \eqref{eqLowEn2}, \eqref{eqLowEn5} and \eqref{eqLowEn3} in \eqref{eqLowEn4}, we obtain \eqref{eqLowEn1}.

    \medskip
    
    \noindent \emph{Step $2$.} Assume that $\alpha \in \ell^{1, 2} (\Z)$ and $\operatorname{W} (0) = 0$.
    Since $\tilde{\chi} (0) \operatorname{W} (0) = 0$ and $\mathcal{F} (\tilde{\chi} \operatorname{W}) \in L_1^1 (\R)$ by Lemma \ref{lemFBound}, we obtain that $\mathcal{F} (\tilde{\chi} (\lambda) \operatorname{W} (\lambda) \lambda^{-1})$ is well defined. Furthermore, as
    \begin{equation*}
        0
        = \tilde{\chi} (0) \operatorname{W} (0)
        = \int_{-\infty}^\infty \mathcal{F} (\tilde{\chi} \operatorname{W}) (\eta) \, d \eta,
    \end{equation*}
    we have
    \begin{align*}
        \mathcal{F} \left( \frac{\tilde{\chi} (\lambda) \operatorname{W} (\lambda)}{\lambda} \right) (\xi)
        & = \pi i \left( \int_{\xi}^\infty \mathcal{F} (\tilde{\chi} \operatorname{W}) (\xi) \, d \eta - \int_{-\infty}^\xi \mathcal{F} (\tilde{\chi} \operatorname{W}) (\xi) \, d \eta \right) \\
        & = 2 \pi i \int_\xi^\infty \mathcal{F} (\tilde{\chi} \operatorname{W}) (\xi) \, d \eta
        = -2 \pi i \int_{-\infty}^\xi \mathcal{F} (\tilde{\chi} \operatorname{W}) (\xi) \, d \eta
    \end{align*}
    Thus,
    \begin{align*}
        \int_{0}^\infty \left| \mathcal{F} \left( \frac{\tilde{\chi} (\lambda) \operatorname{W} (\lambda)}{\lambda} \right) (\xi) \right| \, d \xi
        & \lesssim \int_{0}^\infty \int_\xi^\infty \left| \mathcal{F} (\tilde{\chi} \operatorname{W}) (\eta) \right| d \eta \, d \xi \\
        & \lesssim \int_{-\infty}^\infty \left| \eta \mathcal{F} (\tilde{\chi} \operatorname{W}) (\eta) \right| d \eta
        < \infty,
    \end{align*}
    where we used Fubini's theorem for the third inequality. Similarly, we obtain
    \begin{equation*}
        \int_{0}^\infty \left| \mathcal{F} \left( \frac{\tilde{\chi} (\lambda) \operatorname{W} (\lambda)}{\lambda} \right) (\xi) \right| \, d \xi
        < \infty.
    \end{equation*}
    Thus, $\mathcal{F} (\tilde{\chi} (\lambda) \operatorname{W} (\lambda) \lambda^{-1}) \in L^1 (\R)$, and so is $\mathcal{F} (\tilde{\chi} (\lambda) \operatorname{b} (\lambda))$. Thus, Lemma \ref{lemWiener} together with Proposition \ref{prpNonZero} and the assumption $\alpha \in \ell^{1, 2} (\Z)$ ensure that $\mathcal{F} \left( (\tilde{\chi} (\lambda) \operatorname{b} (\lambda))^{-1} \right) \in L^1 (\R)$.
    Furthermore, using Lemma \ref{lemFBound} for $j = 2$, we obtain that $\mathcal{F} (\operatorname{a}_-) \in L^1 (\R)$. Re-writing
    \begin{equation*}
        \int_{-\infty}^\infty \frac{\lambda e^{i t \lambda^2} \chi (\lambda^2)}{\tilde{\chi} (\lambda) \operatorname{W} (\lambda)} \operatorname{f}_+ (\lambda, y) \operatorname{f}_- (\lambda, x) \, d \lambda
        = -\frac{1}{2 i} \int_{-\infty}^\infty \frac{e^{i t \lambda^2} \chi (\lambda^2)}{\tilde{\chi} (\lambda) \operatorname{b} (\lambda)} \operatorname{f}_+ (\lambda, y) \operatorname{f}_- (\lambda, x) \, d \lambda,
    \end{equation*}
    we can use similar arguments than in Step $1$ of the proof to obtain \eqref{eqLowEn1}. 
    This concludes the proof.
\end{proof}

We can finally prove Theorem \ref{thMain}.

\begin{proof}[Proof of Theorem \ref{thMain}]
    The equation \eqref{eqMain} is a concatenation of the high-energy estimate stated in Proposition \ref{prpHighEn}-\eqref{eqHighEn1} and the low-energy estimate stated in Proposition \ref{prpLowEn}-\eqref{eqLowEn1}. This concludes the proof.
\end{proof}

\bibliographystyle{alpha}
\bibliography{bibliography}

\begin{thebibliography}{AGHKH12}

\bibitem[AGHKH12]{AlGeHoHo12}
Sergio Albeverio, Friedrich Gesztesy, Raphael Hoegh-Krohn, and Helge Holden.
\newblock {\em Solvable models in quantum mechanics}.
\newblock Springer Science \& Business Media, 2012.

\bibitem[Agm75]{Ag75}
Shmuel Agmon.
\newblock {Spectral properties of {S}chr{\"o}dinger operators and scattering
  theory}.
\newblock {\em Annali della Scuola Normale Superiore di Pisa-Classe di
  Scienze}, 2(2):151--218, 1975.

\bibitem[APF14]{FePa14}
Jaime Angulo~Pava and Lucas C.~F. Ferreira.
\newblock On the {Schr{\"o}dinger} equation with singular potentials.
\newblock {\em Differ. Integral Equ.}, 27(7-8):767--800, 2014.

\bibitem[AS05]{AdSa05}
Riccardo Adami and Andrea Sacchetti.
\newblock The transition from diffusion to blow-up for a nonlinear
  {S}chr{\"o}dinger equation in dimension 1.
\newblock {\em Journal of Physics A: Mathematical and General},
  38(39):8379--8392, 2005.

\bibitem[BI14]{BaIg14}
Valeria Banica and Liviu Ignat.
\newblock {Dispersion for the {S}chr{\"o}dinger equation on the line with
  multiple Dirac delta potentials and on delta trees}.
\newblock {\em Analysis \& PDE}, 7(4):903--927, 2014.

\bibitem[Caz03]{Ca03}
Thierry Cazenave.
\newblock {\em {Semilinear {S}chr\"odinger equations}}, volume~10 of {\em
  Courant Lecture Notes in Mathematics}.
\newblock New York University / Courant Institute of Mathematical Sciences, New
  York, 2003.

\bibitem[CMY19]{CoMiYa19}
Horia~D Cornean, Alessandro Michelangeli, and Kenji Yajima.
\newblock Two-dimensional {S}chr{\"o}dinger operators with point interactions:
  threshold expansions, zero modes and ${L}^p$-boundedness of wave operators.
\newblock {\em Reviews in Mathematical Physics}, 31(04):1950012, 2019.

\bibitem[DH09]{DaHo09}
Kiril Datchev and Justin Holmer.
\newblock Fast soliton scattering by attractive delta impurities.
\newblock {\em Communications in Partial Differential Equations},
  34(9):1074--1113, 2009.

\bibitem[DMSY18]{DeMiScYa18}
Gianfausto Dell’Antonio, Alessandro Michelangeli, Raffaele Scandone, and
  Kenji Yajima.
\newblock ${L}^p$-boundedness of wave operators for the three-dimensional
  multi-centre point interaction.
\newblock In {\em Annales Henri Poincar{\'e}}, volume~19, pages 283--322.
  Springer, 2018.

\bibitem[DMW11]{DuMaWe11}
Vincent Duch{\^e}ne, Jeremy~L Marzuola, and Michael~I Weinstein.
\newblock Wave operator bounds for one-dimensional {S}chr{\"o}dinger operators
  with singular potentials and applications.
\newblock {\em Journal of Mathematical Physics}, 52(1), 2011.

\bibitem[dPT06]{AnPiTe06}
Piero d'Ancona, Vittoria Pierfelice, and Alessandro Teta.
\newblock Dispersive estimate for the {S}chr{\"o}dinger equation with point
  interactions.
\newblock {\em Mathematical methods in the applied sciences}, 29(3):309--323,
  2006.

\bibitem[DT79]{DeTr79}
Percy Deift and Eugene Trubowitz.
\newblock {Inverse scattering on the line}.
\newblock {\em Communications on Pure and Applied Mathematics}, 32:121--251,
  1979.

\bibitem[Dur70]{Du70}
Peter~L. Duren.
\newblock {\em Theory of HP spaces}.
\newblock Pure and applied mathematics ; 38. 810852004. Academic Press, New
  York, 1970.

\bibitem[Fad64]{Fa64}
Lioudvig~D. Faddeev.
\newblock {Svojstva $S$-matricy odnomernogo uravneniya Shredingera}.
\newblock In {\em Kraevye zadachi matematicheskoj fiziki. Vol.~2}, volume~73 of
  {\em Trudy Matematicheskogo Instituta im. V.~A.~Steklova}, pages 314--336.
  Nauka, Moscow--Leningrad, 1964.
\newblock English translation: \emph{Properties of the $S$-matrix of the
  one-dimensional {S}chr\"odinger equation}, Amer. Math. Soc. Transl. (2)
  \textbf{65} (1967), 139--166.

\bibitem[GS04]{GoSc04}
Michael Goldberg and Wilhelm Schlag.
\newblock {Dispersive estimates for {S}chr{\"o}dinger operators in dimensions
  one and three}.
\newblock {\em Communications in mathematical physics}, 251(1):157--178, 2004.

\bibitem[GV79a]{GiVe79-b}
Jean Ginibre and G~Velo.
\newblock On a class of nonlinear {S}chr{\"o}dinger equations. {II}.
  {S}cattering theory, general case.
\newblock {\em Journal of Functional Analysis}, 32(1):33--71, 1979.

\bibitem[GV79b]{GiVe79-a}
Jean Ginibre and Giorgio Velo.
\newblock On a class of nonlinear {S}chr{\"o}dinger equations. {I}. {T}he
  {C}auchy problem, general case.
\newblock {\em Journal of Functional Analysis}, 32(1):1--32, 1979.

\bibitem[IS17]{IaSc17}
Felice Iandoli and Raffaele Scandone.
\newblock Dispersive estimates for {S}chr{\"o}dinger operators with point
  interactions in r3.
\newblock In {\em Advances in Quantum Mechanics: Contemporary Trends and Open
  Problems}, pages 187--199. Springer, 2017.

\bibitem[Kat76]{Ka66}
Tosio Kato.
\newblock {\em {{Perturbation theory for linear operators}}}.
\newblock Grundlehren der mathematischen Wissenschaften : a series of
  comprehensive studies in mathematics. Springer, Berlin, 1976.

\bibitem[Kat04]{Ka04}
Yitzhak Katznelson.
\newblock {\em {An Introduction to Harmonic Analysis}}.
\newblock Cambridge Mathematical Library. Cambridge University Press, 3
  edition, 2004.

\bibitem[KM13]{KoMa13}
Oleksiy Kostenko and Mark Malamud.
\newblock 1-{D} {S}chrodinger operators with local point interactions: a
  review.
\newblock {\em Spectral Analysis, Integrable Systems, and Ordinary Differential
  Equations}, pages 235--262, 2013.

\bibitem[KP31]{KrPe31}
Ralph de~Laer Kronig and William~George Penney.
\newblock Quantum mechanics of electrons in crystal lattices.
\newblock {\em Proceedings of the royal society of London. series A, containing
  papers of a mathematical and physical character}, 130(814):499--513, 1931.

\bibitem[KS10]{KoSa10}
Hynek Kova{\v{r}}{\'\i}k and Andrea Sacchetti.
\newblock A nonlinear {S}chr{\"o}dinger equation with two symmetric point
  interactions in one dimension.
\newblock {\em Journal of Physics A: Mathematical and Theoretical},
  43(15):155205, 2010.

\bibitem[Mar18]{Ma18}
Alexandre Martin.
\newblock {On the limiting absorption principle for a new class of
  {S}chr{\"o}dinger Hamiltonians}.
\newblock {\em Confluentes Mathematici}, 10(1):63--94, 2018.

\bibitem[RS81]{ReSi81}
Michael Reed and Barry Simon.
\newblock {\em {I}: {F}unctional analysis}, volume~1.
\newblock Academic press, 1981.

\bibitem[Sch07]{Sc07}
Wilhelm Schlag.
\newblock Dispersive estimates for {S}chr{\"o}dinger operators: a survey.
\newblock {\em Mathematical aspects of nonlinear dispersive equations},
  163:255--285, 2007.

\bibitem[SS07]{Su07}
Catherine Sulem and Pierre-Louis Sulem.
\newblock {\em The nonlinear {S}chr{\"o}dinger equation: self-focusing and wave
  collapse}, volume 139.
\newblock Springer Science \& Business Media, 2007.

\end{thebibliography}

\end{document}